\newcommand\cut[1]{{}}
\newcommand{\vx}{{\mathbf{x}}}
\newcommand{\vA}{{\mathbf{A}}}
\newcommand{\cB}{{\mathcal{B}}}
\newcommand{\cF}{{\mathcal{F}}}
\newcommand{\RR}{\mathbb{R}}
\newcommand{\St}{{\mathrm{subject~to}}} 
\newcommand{\dom}{{\mathrm{dom}}} 
\newcommand{\Proj}{{\mathrm{Proj}}}
\newcommand{\image}[1]{\mathrm{Im}(#1)}
\DeclareMathOperator*{\argmin}{argmin}
\DeclareMathOperator*{\Min}{minimize}
\DeclarePairedDelimiter{\dotp}{\langle}{\rangle}
\newcommand{\bc}{\begin{center}}
\newcommand{\ec}{\end{center}}
\newcommand{\bdm}{\begin{displaymath}}
\newcommand{\edm}{\end{displaymath}}
\newcommand{\beq}{\begin{equation}}
\newcommand{\eeq}{\end{equation}}
\newcommand{\bfl}{\begin{flushleft}}
\newcommand{\efl}{\end{flushleft}}
\newcommand{\bt}{\begin{tabbing}}
\newcommand{\et}{\end{tabbing}}
\newcommand{\beqn}{\begin{align}}
\newcommand{\eeqn}{\end{align}}
\newcommand{\beqs}{\begin{align*}} 
\newcommand{\eeqs}{\end{align*}}  
\newtheorem{coro}{Corollary}
\newtheorem{prop}{Proposition}
\newcommand{\norm}[1]{{\| #1 \|}} 
\newcommand{\Lag}{{\mathcal L}}
\title{Global Convergence of ADMM in Nonconvex Nonsmooth Optimization \thanks{The work of W. Yin was supported in part by NSF grants DMS-1720237 and ECCS-1462397, and ONR Grant N00014171216. The work of J. Zeng was supported in part by the NSFC grants (61603162, 11501440, 61772246, 61603163) and the doctoral start-up foundation of Jiangxi Normal Univerity.
}}
\author{Yu Wang \and
        Wotao Yin \and
        Jinshan Zeng$^\dag$}
\institute{
Y. Wang\at
              Department of Statistics, University of California, Berkeley (UCB),
              Berkeley, CA 94704, USA\\
              \email{wang.yu@berkeley.edu}\\[3pt]
W. Yin\at
              Department of Mathematics, University of California, Los Angeles (UCLA),
              Los Angeles, CA 90025, USA\\
              \email{wotaoyin@ucla.edu}\\[3pt]
Corresponding author: J. Zeng\at
             School of Computer and Information Engineering, Jiangxi Normal University, Nanchang,  Jiangxi 330022, China\\
              \email{jsh.zeng@gmail.com}}
\date{\today}
\journalname{Report} 
\begin{document}

\maketitle
\abstract{
In this paper, we analyze the convergence of the alternating direction method of multipliers (ADMM) for minimizing a nonconvex and possibly nonsmooth objective function, $\phi(x_0,\ldots,x_p,y)$, subject to coupled linear equality constraints. Our ADMM updates each of the primal variables $x_0,\ldots,x_p,y$, followed by updating the dual variable. We separate the variable $y$ from $x_i$'s as it has a special role in our analysis.

The developed convergence guarantee covers a variety of nonconvex functions such as piecewise linear functions,  $\ell_q$ quasi-norm, Schatten-$q$ quasi-norm ($0<q<1$), minimax concave penalty (MCP), and smoothly clipped absolute deviation (SCAD) penalty. It also allows nonconvex constraints such as compact manifolds (e.g., spherical, Stiefel, and Grassman manifolds) and linear complementarity constraints. Also, the $x_0$-block can be almost any lower semi-continuous function.

By applying our analysis, we show, for the first time, that several ADMM algorithms applied to solve nonconvex models in statistical learning, optimization on manifold, and matrix decomposition are guaranteed to converge.

Our results provide sufficient conditions for ADMM to converge on (convex or nonconvex) monotropic programs with three or more blocks, as they are special cases of our model.

ADMM has been regarded as a variant to the augmented Lagrangian method (ALM). We present a simple example to illustrate how ADMM converges but ALM diverges with bounded penalty parameter $\beta$. Indicated by this example and other analysis in this paper, ADMM might be a better choice than ALM for some nonconvex \emph{nonsmooth} problems, because ADMM is not only easier to implement, it is also more likely to converge for the concerned scenarios.
}
\keywords{ADMM, nonconvex optimization, augmented Lagrangian method, block coordinate descent, sparse optimization}
\section{Introduction}
In this paper, we consider the (possibly nonconvex and nonsmooth) optimization problem:

\begin{align}\label{Eq:originalForm0}
  \Min_{x_0,x_1,\ldots,x_p,y}\quad & \hspace{12pt}\phi(x_0,x_1,\ldots,x_p,y)\\\nonumber
  \St\quad & A_0x_0 + A_1x_1 +\cdots+A_px_p + B y = b,
\end{align}
where $\phi:\RR^{n_0}\times\cdots\times\RR^{n_p}\times \RR^q\to \RR\cup\{\infty\}$ is a continuous function, $x_i\in\RR^{n_i}$ are variables with their coefficient matrices $A_i\in\RR^{m\times n_i}$,  $i=0,\ldots,p$, and $y\in\RR^q$ is the last variable with its coefficient matrix $B\in\RR^{m\times q}$. The model remains as general without $y$ and $By$; but we keep $y$ and $B$ to simplify the notation.

We set $b=0$ throughout the paper to simplify our analysis. All of our results still hold if $b\not=0$ is in the image of the matrix $B$, i.e., $b\in \mathrm{Im}(B)$.

Besides the linear constraints in \eqref{Eq:originalForm0}, any constraint on each variable $x_0,x_1,\ldots,x_p$ and $y$ can be treated as an indicator function and included in the objective function $\phi$. Therefore, we do not include constraints like: $x_0\in \mathcal{X}_0, x_1\in \mathcal{X}_1,\ldots, x_p\in \mathcal{X}_p, y\in \mathcal{Y}.$


In spite of the success of ADMM on convex problems, the behavior of ADMM on nonconvex problems has been largely a mystery, especially when there are also nonsmooth functions and nonconvex sets in the problems. 
ADMM  generally fails on nonconvexity problems, but it has found to not only work in some applications but often exhibit great performance! Indeed, 
successful examples include: matrix completion and separation \cite{Yang2015,xu11,shen11,Sun2014}, asset allocation \cite{wen13}, 
tensor factorization \cite{liavas2014},
phase retrieval \cite{Wen2012}, compressive sensing \cite{Chartrand2013}, optimal power flow \cite{You2014}, direction fields correction \cite{Lai2014}, noisy color image restoration \cite{Lai2014}, image registration \cite{Bouaziz2013}, network inference \cite{Miksik2014}, and global conformal mapping \cite{Lai2014}.
In these applications, the objective function can be nonconvex, nonsmooth, or both. Examples include the piecewise linear function, the $\ell_q$ quasi-norm for $q\in(0,1)$, the Schatten-$q~(0<q<1)$ \cite{wiki:xxx} quasi-norm $f(X) = \sum_{i} \sigma_i(X)^q$ (where $\sigma_i(X)$ denotes the $i$th largest singular value of $X$), and the indicator function $\iota_{\cB}$, where $\cB$ is a nonconvex set.

The success of these applications can be intriguing, since these applications are far beyond the scope of the theoretical conditions that ADMM is proved to converge.
In fact, even the three-block ADMM can diverge on a simple convex problem \cite{Chen2014}. Nonetheless, we still find that it  works well in practice. This has motivated us 
to explore in the paper and respond to this question: when will the ADMM type algorithms converge if the objective function includes nonconvex nonsmooth functions?

We present our Algorithm \ref{Alg:BCD}, where $\Lag_\beta$ denotes the augmented Lagrangian (\ref{Eq:lagrangian}), and show that it converges for a large class of problems.
For simplicity, Algorithm \ref{Alg:BCD}  uses the standard ADMM subproblems, which minimize the augmented Lagrangian $\Lag_\beta$ with all but one variable fixed.
It is possible to extend them to inexact, linearized, and/or prox-gradient subproblems as long as a few key principles (cf. \S 3.1) are preserved.


In this paper, under some assumptions on the objective and matrices, Algorithm \ref{Alg:BCD} is proved to converge. 
Algorithm \ref{Alg:BCD}  is a generalization to the coordinate descent method. 
By setting $A_0,A_1,\ldots,A_p,B$ to 0, Algorithm \ref{Alg:BCD}  reduces to the \emph{cyclic} coordinate descent method. 

\subsection{Proposed algorithm}
Our variable is $\vx:=[x_0;\ldots;x_p]\in\RR^n$ where $n = \sum_{i=0}^p n_i$. Let $\vA:=[A_0~\cdots~A_p]\in\RR^{m\times n}$ and $\vA\vx:=\sum_{i=0}^pA_ix_i\in\RR^{m}$. To present our algorithm, we define the augmented Lagrangian:
\begin{align}\label{Eq:lagrangian}
\Lag_\beta(\vx,y,w) & := \phi(\vx,y) + \dotp{w,\vA\vx+By}+ \frac{\beta}{2}\|\vA\vx+By \|^2.
\end{align}
The proposed Algorithm \ref{Alg:BCD} extends the standard ADMM to  multiple variable blocks. It also extends the \emph{coordinate descent} algorithms dealing with linear constraints. We let $x_{<i} := [x_0;\ldots;x_{i-1}]\in \RR^{n_0+n_1+\cdots+n_{i-1}}$ and $x_{>i} := [x_{i+1};\ldots;x_{p}]\in \RR^{n_{i+1}+\cdots+n_{p}}$ (clearly, $x_{<0}$ and $x_{>p}$ are null variables, which may be used for notational ease). Subvectors $x_{\leq i} := [x_{<i}, x_i]$ and $x_{\geq i}$ are defined similarly. The convergence of Algorithm \ref{Alg:BCD} will be given in Theorems \ref{thm:main} and \ref{thm:main_theorem_convex}.

\begin{algorithm}[t]
{\small
\begin{algorithmic}\caption{Nonconvex ADMM for \eqref{Eq:originalForm0}}\label{Alg:BCD}
\STATE {\bf Initialize} $x_1^0,\ldots,x_p^0,y^0,w^0$ 
\smallskip
\WHILE{stopping criteria not satisfied}
\smallskip
\FOR{$i=0,\ldots,p$} 
\STATE $x_i^{k+1} \gets \argmin_{x_i} \Lag_\beta(x^{k+1}_{<i},x_i,x_{>i}^k,y^k,w^k)$;
\ENDFOR
\smallskip
\STATE $y^{k+1} \gets \argmin_{y} \Lag_\beta(\vx^{k+1},y,w^k)$;
\smallskip
\STATE $w^{k+1} \gets w^k + \beta\left(\vA\vx^{k+1}+By^{k+1}\right)$;
\smallskip
\STATE $k\gets k+1$;
\ENDWHILE
\STATE \textbf{return} $x_1^k,\ldots,x_p^k$ and $y^k$.
\end{algorithmic}}
\end{algorithm}

\subsection{Relation to the augmented Lagrangian method (ALM)}
ALM is a widely-used method for solving constrained optimization models \cite{Hestenes1969,Powell1967}. It applies broadly to nonconvex nonsmooth problems.
ADMM is an approximation to ALM by 
sequentially updating each of the primal variables.

ALM  generally uses a sequence of  penalty parameters $\{\beta^k\}$, which is nondecreasing and possibly unbounded. When $\beta^k$ becomes large, the ALM subproblem becomes ill-conditioned. Therefore,  using bounded  $\beta^k$ is practically desirable (see \cite[Theorem 5.3]{Conn1991}, \cite[Proposition 2.4]{Bertsekas2014}, or \cite[Chapter 7]{Birgin2014}).
For general nonconvex and nonsmooth problems, it is well known that
$\beta^k$, $k\in \mathbb{N}$ is bounded is not enough for the convergence of ALM \cite[Section 2.1]{Bertsekas2014}.  Proposition \ref{prop:convergence} below introduces a  simple example on which ALM diverges with any bounded $\beta^k$. 
It is surprising, however, that ADMM converges in finite steps for any fixed $\beta>1$ on this example.
\begin{proposition}\label{prop:example}
Consider the problem
\begin{align}\label{Eq:specificEg}
\Min_{x,y\in\RR} & \quad x^2 - y^2\\
\St & \quad x = y,~ x\in[-1,1].\nonumber
\end{align}
It holds that
\begin{enumerate}
\cut{\item the Lagrangian of \eqref{Eq:specificEg} is unbounded; the augmented Lagrangian $\Lag_\beta(x,y,w)$ is bounded;}
\item If $\{\beta^k|k\in\mathbb{N}\}$ is bounded, ALM generates a divergent sequence;
\item for any fixed $\beta>4$, ADMM generates a convergent and finite sequence to a solution.
\end{enumerate}
\end{proposition}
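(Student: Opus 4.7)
Both claims reduce to explicit closed-form analysis of the subproblems, exploiting the fact that $\phi(x,y)=x^2-y^2$ is strictly concave (Hessian $-2$) in $y$ while strictly convex and hard-constrained in $x$. The plan is to solve each subproblem in closed form, track the dual variable's algebraic behavior, and extract either a sign-contradiction (for ALM) or a finite-termination identity (for ADMM).

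For Claim~2 (ALM divergence), first note that whenever $\beta^k\le 2$ the $y$-block of $\Lag_{\beta^k}$ is unbounded below, since the coefficient of $y^2$ equals $\beta^k/2-1$; so the ALM subproblem has no minimizer and the method fails. Hence one may assume $\beta^k>2$. Eliminating $y$ in closed form gives $y^*=(\beta x+w)/(\beta-2)$, and substituting back into $\Lag_\beta$ the reduced objective in $x$ turns out to be \emph{strictly concave}: its minimum over $x\in[-1,1]$ is therefore attained at an endpoint $x^*\in\{-1,+1\}$, with $x^*=-\sign(w)$ (or either endpoint when $w=0$). A direct computation then shows $|x^*-y^*|\ge 2/(\beta^k-2)$, so infeasibility is bounded below uniformly as long as $\{\beta^k\}$ is bounded. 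The argument is then by contradiction: if ALM converged to $(x^\infty,y^\infty,w^\infty)$ along some subsequence with $\beta^{k_j}\to\beta^*$, then $w^{k+1}-w^k=\beta^k(x^{k+1}-y^{k+1})\to 0$ forces $x^\infty=y^\infty$, while the endpoint analysis above forces $x^\infty\in\{-1,+1\}$ with $y^\infty=(\pm\beta^*+w^\infty)/(\beta^*-2)$; together these yield $w^\infty=\mp 2$, which contradicts the sign rule that picked $x^\infty$ in the first place ($x^\infty=+1$ required $w^\infty\le 0$). Hence no accumulation point exists and the ALM sequence diverges.

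For Claim~3 (finite-step ADMM convergence), write down the ADMM updates with the labeling from Algorithm~\ref{Alg:BCD}. The key observation is that after the first full sweep the dual variable collapses to the identity $w^{k+1}=-2\, y^{k+1}$, which is simply the $y$-update's first-order condition rearranged against the dual update. Plugging this identity back into the next sweep reduces the $x_0$-update to $x_0^{k+2}=\Proj_{[-1,1]}(y^{k+1})$ and the $y$-update to $y^{k+2}=y^{k+1}$ whenever the projection is inactive. Thus if $|y^{k+1}|\le 1$ one has immediately $x_0^{k+2}=y^{k+2}=y^{k+1}$, primal feasibility $x_0=y$ holds, and the iterate is stationary thereafter; otherwise projection clips $x_0$ to $\pm 1$ and the next sweep produces an affine one-dimensional recursion in $y^k$ alone. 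A short closed-form analysis of that recursion shows it drives $|y^k|$ into $[-1,1]$ in finitely many steps, after which the previous sentence delivers exact termination.

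The main obstacle is the projection-active phase of Claim~3: the one-dimensional recursion governing $y^k$ while $x_0^k$ is pinned to a boundary of $[-1,1]$ can \emph{a priori} oscillate or blow up, so the finite-termination assertion rests on establishing contraction in the signed-distance quantity $y^k-\sign(y^k)$ along a two-step composition of the map, and on ruling out (for the claimed $\beta$) more than one sign flip before the iterate enters $(-1,1)$. This is the only step where the precise lower bound on $\beta$ enters, and is where boundary effects must be handled carefully; once past it, the identity $w^k=-2y^k$ makes the remaining bookkeeping essentially a one-line substitution.
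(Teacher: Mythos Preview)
Your plan tracks the paper's proof closely: for ALM, both you and the paper eliminate $y$ and observe the reduced objective in $x$ is strictly concave so the minimizer sits at an endpoint; for ADMM, both pivot on the identity $w^{k}=-2y^{k}$ (from the $y$-optimality condition plus the dual update) to collapse the iteration to the scalar recursion $x^{k+1}=\Proj_{[-1,1]}(y^k)$, $y^{k+1}=(\beta x^{k+1}-y^k)/(\beta-1)$, and then argue finitely many projection-active steps suffice. The paper packages the ALM part as an explicit dual-ascent recursion for $w^k$ and shows it oscillates around~$0$; your limit-point contradiction is an alternative wrapping of the same computation.

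There is, however, a sign slip in your ALM step that breaks the contradiction as written. With $\Lag_\beta=x^2-y^2+w(x-y)+\tfrac{\beta}{2}(x-y)^2$, eliminating $y$ gives the reduced objective $-\tfrac{(2x+w)^2}{2(\beta-2)}$, whose minimum over $[-1,1]$ is attained at the endpoint that \emph{maximizes} $|2x+w|$, namely $x^*=\operatorname{sign}(w)$, not $-\operatorname{sign}(w)$. With the correct sign your residual bound $|x^*-y^*|=(|w|+2)/(\beta-2)\ge 2/(\beta-2)$ holds and the contradiction goes through: a limit with $x^\infty=+1$ would require $w^\infty\ge 0$, while $x^\infty=y^\infty$ forces $w^\infty=-2$. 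Under your stated sign rule, $x^\infty=+1$ is \emph{compatible} with $w^\infty=-2\le 0$, so no contradiction arises. Also, the right conclusion is ``the sequence does not converge,'' not ``no accumulation point exists'': the paper's explicit iteration shows $w^k$ remains bounded and oscillates, so accumulation points are present. For the ADMM part your outline is fine; the paper handles your ``main obstacle'' by noting that whenever $|y^k|>1$ one has $y^{k+1}-x^{k+1}=-(y^k-x^{k+1})/(\beta-1)$, so the overshoot past $[-1,1]$ shrinks and in fact enters $[-1,1]$ after at most a couple of steps once $|y^k|$ is moderate.
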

The proof is straightforward and included in the Appendix\cut{ for completeness}.  ALM diverges because $\Lag_\beta(x,y,w)$ does not have a saddle point, and there is a non-zero duality gap.\cut{, which means the duality gap of the model is not zero. The Gauss-Seidel update in ADMM makes it much more stable in this example.} ADMM, however, is unaffected. As the proof shows, the ADMM sequence satisfies  $2y^k=-w^k$, $\forall k$. By substituting $w\equiv-2y$ into $\Lag_\beta(x,y,w)$,\cut{ from the augmented Lagrangian, then} we get a convex function in $(x,y)$! Indeed,
\begin{align*}
\rho(x,y):=\Lag_\beta(x,y,w)\big|_{w=-2y} & = (x^2 - y^2)+\iota_{[-1,1]}(x)-2y(x-y)+\frac{\beta}{2}\big|x-y\big|^2= \frac{\beta+2}{2}|x-y|^2+\iota_{[-1,1]}(x),
\end{align*}
where $\iota_{S}$ is the indicator function of set $S$ (that is, $\iota_S(x)=0$ if $x\in S$; otherwise, equals infinity).
It turns out that ADMM solves \eqref{Eq:specificEg} by performing the following  coordinate descent iteration to $\rho(x,y)$:
$$
\begin{cases}
x^{k+1} = \argmin_x \rho(x,y^k), \\
y^{k+1} = y^k - \frac{\beta}{\beta^2 - 4} \frac{\mathrm{d}}{\mathrm{d}y}\rho(x^{k+1},y^k).
\end{cases}
$$
Our analysis for the general case will show that  the primal variable $y$ somehow  ``controls" 
the dual variable $w$ and reduces ADMM to an iteration that is similar to coordinate descent.

\subsection{Related literature}\label{sec:literature}

The original ADMM was proposed in \cite{Glowinski1975On,gabay1976}. For convex problems, its convergence was established firstly in \cite{Glowinski1984Numerical} and its convergence rates given in \cite{He2012On,Davis2014Convergencea,Davis2014Convergencec} in different settings.
When the objective function is nonconvex, the recent results \cite{xu11,jiang13,Magnusson2014} directly make assumptions  on the iterates $(\vx^k,y^k,w^k)$.
Hong et al.  \cite{hong2014a} deals with the nonconvex separable objective functions for some specific $A_i$,
which forms the sharing and consensus problem.
Li and Pong \cite{Li2014} studied the convergence of ADMM for some special nonconvex models, where one of the matrices $A$ and $B$ is an identity matrix. Wang et al. \cite{Wang2015,Wang2014} studied the convergence of the nonconvex Bregman ADMM algorithm, which includes ADMM as a special case. We review their results and compare to ours in \S \ref{sec:discussion} below.

\subsection{Contribution and novelty}

The main contribution of this paper is the establishment of the global convergence of Algorithm \ref{Alg:BCD} under certain assumptions given in Theorems \ref{thm:main} and \ref{thm:main_theorem_convex} below. 
The assumptions apply to largely many nonconvex and nonsmooth objective functions.
The developed theoretical results can be extended to the case where subproblems are solved inexactly with summable errors.
We also allow the primal block variables $x_1,\ldots,x_p$ to be updated in an arbitrary order as long as $x_0$ is updated first and $y$ is updated last (just before the $w$-update).
The novelty of this paper can be summarized as follows:
\begin{enumerate}
\item[(1)] {\bf Weaker assumptions.} Compared to the related works \cite{xu11,jiang13,Magnusson2014,hong2014a,Li2014,Wang2015,Wang2014}, the convergence conditions in this paper are  weaker, extending the ADMM theory to significantly more nonconvex functions and nonconvex sets.  See Table 1. In addition, we allow the primal variables $x_1,\ldots, x_p$ to be updated in an arbitrary order at each iteration\footnote{This is  the best that one hope (except for very specific problems) since \cite[Section 1]{yy2016} shows a convex 2-block problem, which ADMM fails to converge. }, which is new in the ADMM literature. We  show that most of our assumptions are necessary by providing counter examples. We also give the first example that causes ADMM to converge but ALM to diverge.

\item[(2)] {\bf New examples.} By applying our main theorems, we prove convergence for the nonconvex ADMM applied to the following problems which could not be recovered from previous convergence theory:
\begin{itemize}
  \item statistical regression based on nonconvex regularizer such as minimax concave penalty(MCP), smoothly clipped absolute deviation (SCAD), and  $\ell_q$ quasi-norm;
  \item minimizing smooth functions subject to norm or Stiefel/Grassmannian manifold constraints;
  \item matrix decomposition using nonconvex Schatten-$q$ regularizer;
  \item smooth minimization subject to complementarity constraints.
\end{itemize}
\item[(3)] {\bf Novel techniques.}
We  improve upon the existing analysis techniques and introduce new ones.

    \begin{enumerate}
    \item[(a)] \textit{An induction technique for nonconvex, nonsmooth case.}
    The analysis uses the augmented Lagrangian as the Lyapunov function: Algorithm \ref{Alg:BCD} produces a sequence of points whose augmented Lagrangian function values are decreasing and lower bounded. This technique appeared first in \cite{hong2014a} and also in \cite{Li2014,Wang2015}. However, it has trouble handling nonsmooth functions. An induction technique is introduced to overcome this difficulty and extend the current framework to nonconvex, nonsmooth, multi-block cases. The technique is used in the proof of Lemma \ref{lm:p2}.

    \item[(b)] \textit{Restricted prox-regularity.} Most of the convergence analysis of nonconvex optimization either assumes or proves the sufficient descent and bounded subgradient properties (c.f., \cite{attouch2011,hong2014a}). This property is easily obtainable if the objective is smooth. However, some nonconvex and nonsmooth objectives (e.g. nonconvex $\ell_q$ quasi-norm) violate these properties. We overcome this challenge with the introduced \emph{restricted prox-regularity property} (Definition \ref{def:quadcvx}). If the objective satisfies such a property, we prove that the sequence enjoy sufficient descent and bounded subgradients after a finite number of iterations.

    \item[(c)] \textit{More general linear mappings}. Most nonconvex ADMM  analysis is applied to the primal variables $\vx$ and $y$ directly. 
This requires the matrices $A_0,A_1,\ldots,A_p,B$ to either identity or have full column/row rank. In this paper, we introduce techniques to work with possibly rank-deficient $A_0,A_1,\ldots,A_p,B$ 
(see, for example, Lemma \ref{lm:yw}). This allows us to ensure convergence of ADMM on some important applications in signal processing and statistical learning (see \S \ref{sec:application}).
    \end{enumerate}
  In addition, we use several other techniques that are tailored to relax our convergence assumptions as much as possible.
\end{enumerate}

\subsection{Notation and organization}
We denote $\mathbb{R}$ as the real number set, $\mathbb{R}\cup \{+\infty\}$ as the extended real number set, $\mathbb{R}_+$ as the positive real number set, and $\mathbb{N}$ as the natural number set. Given a matrix $X$, $\mathrm{Im}(X)$ denotes its image, $\sigma_i(X)$ denotes its $i$th largest singular value. $\|\cdot\|$ represents the Euclidean norm for a vector or the Frobenius norm for a matrix. $\mathrm{dom}(f)$ denotes the domain of a function $f$. For any two square matrices $A$ and $B$ with the same size, $A \succeq B$ means that $A-B$ is positively semi-definite.

The remainder of this paper is organized as follows. Section \ref{sec:result} presents the main convergence analysis. Section \ref{sec:proof} gives the detailed proofs. Section \ref{sec:discussion} discusses the tightness of the assumptions, the primal variable update order, and inexact minimization issues. Section \ref{sec:application} applies the developed theorems in some typical applications and obtains novel convergence results. Finally, Section \ref{sec:conclusion} concludes this paper.
\begin{table}[ht!]
\tabulinesep=1.5mm
\centering
\begin{center}
\caption{Conditions for ADMM convergence (note: $f_0,f_1,\ldots,f_p$ is not required to exist)}
\end{center}

\vspace{1mm}
\label{tab:main_results}
\begin{tabu}{l|cc|ll}\hline
                  & \multicolumn{2}{c|}{Scenario 1}                                            & \multicolumn{2}{c}{Scenario 2}                                               \\ \hline
                 model & \multicolumn{2}{l|}{$\begin{array}{cc}
  \Min\limits_{\vx=(x_0,\ldots,x_p),y}~&\phi(\vx, y) := g(\vx) + \sum_{i=0}^p f_i(x_i) + h(y)\\
  \St & \vA\vx + B y = 0
\end{array}$}          & \multicolumn{2}{l}{$\begin{array}{cl}
  \Min\limits_{\vx,y}\quad & \phi(\vx,y)\\
  \St & \vA\vx + B y = 0
\end{array}$}               \\ \hline
$\phi$            & \multicolumn{4}{c}{coercive over the feasible set $\{(\vx,y):\vA\vx + B y = 0\}$; see assumption \ref{A_coercive}}
    \\ \hline
$g$, $h$          & \multicolumn{2}{c|}{Lipschitz differentiable}                              &                                                                       \multicolumn{2}{l}{\multirow{4}{*}{$\phi$ Lipschitz differentiable}}      \\ \cline{1-3}
             & \multicolumn{1}{c|}{Scenario 1a}                 & Scenario 1b   &     \multicolumn{2}{l}{}                                     \\ \cline{1-3}
$f_0$             & \multicolumn{1}{c|}{lower semi-continuous}                 & $\partial f$  bounded in any bounded set   &     \multicolumn{2}{l}{}                                     \\ \cline{1-3}
$f_1,\ldots, f_p$ & \multicolumn{1}{c|}{restricted prox-regular}   & piecewise linear                              &                   \multicolumn{2}{l}{}                                                         \\  \hline
$\vA, B$            & \multicolumn{4}{c}{$\image{\vA} \subseteq \image{B}$}                                                                                                                \\ \hline
                  & \multicolumn{4}{c}{
solution to each ADMM sub-problem is Lipschitz w.r.t. input (A3)
} \\ \hline
\end{tabu}
\end{table}

\section{Main results}\label{sec:result}
\subsection{Definitions}
In our definitions, $\partial f$ denotes the  set of general subgradients of $f$ in \cut{Definition \ref{def:general_sub_gradient} (c.f. }\cite[Definition 8.3]{VariationalAnalysis-RockWets}.
We call a function \emph{Lipschitz differentiable} if it is differentiable and its gradient is Lipschitz continuous. The functions given in the next two definitions are permitted in our model.


\begin{definition}[Piecewise linear function]
\label{def:piecewise_linear}
A function $f:\RR^n\to\RR$ is  \emph{piecewise linear} if there exist polyhedra $U_1,\ldots,U_K\subset\RR^n$, vectors~$a_1,\ldots,a_K\in\RR^n$, and points~$b_1,\ldots,b_K\in\RR$ such that $\bigcup_{i=1}^K \overline{U_i} = \RR^n$, $U_i\bigcap U_j = \emptyset~ (\forall~i\neq j)$, and
$f(x) = a_i^Tx + b_i~ \text{when } x\in U_i$, $i=1,\ldots,K$.
\end{definition}

\begin{definition}[Restricted prox-regularity]\label{def:quadcvx}
For a lower semi-continuous function $f$, let  $M\in\RR_+$, $f:\RR^N\to\RR\cup\{\infty\}$, and define the exclusion set
$$S_M:=\{x\in\dom(f): \|d\|>M~\mbox{for all}~d\in\partial f(x)\}.$$
$f$ is called \emph{restricted prox-regular} if, for any $M > 0$ and bounded set $T\subseteq \mathrm{dom}f$, there exists $\gamma>0$ such that
\begin{align}\label{quadcvx}
f(y)+\frac{\gamma}{2}\|x-y\|^2\ge f(x)+\dotp{d,y-x},\quad \forall~x\in T\setminus S_M,~y\in T ,~d\in\partial f(x),~\|d\|\le M.
\end{align}
(If $ T\setminus S_M$ is empty, \eqref{quadcvx} is satisfied.)
\end{definition}
Definition \ref{def:quadcvx} is related to, but weaker than, the concepts \emph{prox-regularity} \cite{Poliquin1996}, \emph{hypomonotonicity} \cite[Example 12.28]{VariationalAnalysis-RockWets} and \emph{semi-convexity} \cite{Mifflin1977,Ivanov1997,Krystof2015,Mollenhoff2015}, all of which impose global conditions. Definition \ref{def:quadcvx} only requires \eqref{quadcvx} to hold over a subset. As shown in Proposition \ref{Prop:restricted_prox}, while prox-regular functions include any convex functions and any $C^1$ functions with Lipschitz continuous gradients, restricted prox-regular functions further include a set of non-smooth non-convex functions such as $\ell_q$ quasi-norms ($0<q<1$), Schatten-$q$ quasi-norms ($0<q<1$), and indicator functions of compact smooth manifolds.
\begin{prop}\label{Prop:restricted_prox}\textbf{Examples of (restricted) prox-regular functions} The following functions are prox-regular functions:
\begin{itemize}
\item[(1)] convex functions, including indicator functions of convex sets,
\item[(2)] $C^1$ smooth functions with $L$-Lipschitz continuous gradient.
\end{itemize}
The following functions are restricted prox-regular functions:
\begin{itemize}
\item[(1)] $\ell_q(x) := \|x\|_q^q$ function for $q\in (0, 1)$;
\item[(2)] Schatten-q quasi-norm:
$\norm{A}_{q} = \sum_{i=1}^n \sigma_i^{q}$,
where $q\in(0,1)$ and $\sigma_i$ is the $i$th largest singular value of $A$;
\item[(3)] Indicator functions $\iota_S$ of a compact $C^2$ manifold, such as the unit sphere in a finite Euclidean space.
\end{itemize}
\end{prop}

Definition \ref{def:quadcvx} introduces functions that do not satisfy \eqref{quadcvx} globally \emph{only because} they are asymptotically ``steep" in the exclusion set  $S_M$.
Such functions include $|x|^q$ ($0<q<1$), for which $S_M$ has the form $(-\epsilon_M,0)\cup(0,\epsilon_M)$; the Schatten-$q$ quasi-norm ($0<q<1$), for which $S_M = \{X: \exists i, \ \sigma_i(X) <\epsilon_M\}$ as well as $\log(x)$, for which $S_M=(0,\epsilon_M)$, where $\epsilon_M$ is a constant depending on $M$. We only  need \eqref{quadcvx} 
because the iterates $x_i^k$ of Algorithm \ref{Alg:BCD}, for all large $k$, never enter the exclusion set $S_M$. 
\subsection{Main theorems}
To ensure the boundedness of the sequence $(\vx^k,y^k,w^k)$, we only need the coercivity of the objective function within the feasible set.
\begin{enumerate}[label={A\arabic*}]
\item\label{A_coercive} \textbf{(coercivity)} Define the feasible set $\cF:=\{(\vx,y)\in\RR^{n+q}:\vA\vx+By=0\}$. The objective function $\phi(\vx,y)$ is coercive over this set, that is,  $\phi(\vx,y)\to\infty$ if $(\vx,y)\in\cF$ and $\|(\vx,y)\|\to\infty$;
\end{enumerate}
If the feasible set of $(\vx,y)$ is bounded, then \ref{A_coercive} holds trivially for any continuous objective function. Therefore, \ref{A_coercive} is much weaker than assuming that the objective function is coercive over the entire space $\RR^{n+q}$. The assumption \ref{A_coercive} can be dropped if the boundedness of the sequence can be deducted from other means.

Within the proof, $A_ix_i^k$ and $By^k$ often appear in the first order conditions (e.g. see equations \eqref{fsubg}, \eqref{condpi1}). In order to have a reverse control, i.e., controlling $x_i^k, y^k$ based on $A_ix_i^k, By^k$, we need the following two assumptions on matrices $A_i$ and $B$.
\begin{enumerate}[label={A\arabic*}]
\addtocounter{enumi}{1}
\item\label{A_feasible} \textbf{(feasibility)} $\image{\vA} \subseteq \image{B}$, where $\image{\cdot}$ returns the image of a matrix;
\end{enumerate}
\begin{enumerate}[label={A\arabic*}]
\addtocounter{enumi}{2}
\item\label{A_rank} \textbf{(Lipschitz sub-minimization paths)}
\begin{enumerate}
\item For any fixed $\vx$, $\argmin_y\{ \phi(\vx,y): By = u\}$ has a unique minimizer. $H:\mathrm{Im}(B)\rightarrow \mathbb{R}^{q}$ defined by $H(u)\triangleq \argmin_y\{ \phi(\vx,y): By = u\}$ is a Lipschitz continuous map.
\item For $i=0,\ldots,p$ and any $x_{<i}$, $x_{>i}$ and $y$, $\argmin_{x_i}\{ \phi(x_{<i},x_i,x_{>i},y) :A_ix_i = u\}$ has a unique minimizer and $F_i : \mathrm{Im}(A_i)\rightarrow \mathbb{R}^{n_i}$ defined by $F_i(u) \triangleq \argmin_{x_i}\{ \phi(x_{<i},x_i,x_{>i},y) :A_ix_i = u\}$ is a Lipschitz continuous map.
\end{enumerate}
Moreover, the above $F_i$ and $H$ have a universal Lipschitz constant $\bar{M}>0$.
\end{enumerate}
These two assumptions allow us to control $x^k_i, y^k$ by $A_ix_i^k, By^k$ as in Lemma \ref{lemma:reverse_control}.

\begin{lemma}\label{lemma:reverse_control}
It holds that, $\forall k_1,k_2\in \mathbb{N}$,
\begin{align}
\|y^{k_1} - y^{k_2}\| &\leq \bar{M} \|By^{k_1} - By^{k_2}\|,\label{MBy}\\
\|x_i^{k_1} - x_i^{k_2}\|&\leq\bar{M} \|A_ix_i^{k_1} - A_ix_i^{k_2}\|,\quad i=0, 1, \ldots,p,\label{MAx}
\end{align}
where $\bar{M}$ is given in \ref{A_rank}.
\end{lemma}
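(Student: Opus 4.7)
The plan is to exploit assumption \ref{A_rank} directly after decomposing each ADMM subproblem into an ``inner'' constrained minimization (where $By$, respectively $A_i x_i$, is fixed to its achieved value) and an ``outer'' minimization over that achieved value. Once I identify the ADMM iterates with evaluations of the Lipschitz maps $H$ and $F_i$ defined in \ref{A_rank}, both inequalities fall out immediately.

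First I would handle the $y$-inequality. The $y$-update solves
\begin{equation*}
y^{k+1}=\argmin_y\Big\{\phi(\vx^{k+1},y)+\dotp*{w^k,By}+\tfrac{\beta}{2}\|\vA\vx^{k+1}+By\|^2\Big\}.
\end{equation*}
Observe that if I restrict to the affine slice $\{y:By=u\}$, the inner-product term and the quadratic penalty both depend on $y$ only through $By=u$ and so are constants on this slice. Hence the restriction of the above minimization to that slice is exactly $\argmin_y\{\phi(\vx^{k+1},y):By=u\}$, which by \ref{A_rank}(a) equals $H(u)$ (interpreted with the fixed $\vx=\vx^{k+1}$). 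Since $y^{k+1}$ lies in the slice with $u=By^{k+1}$ and must also solve the inner minimization there, I get $y^{k+1}=H(By^{k+1})$. Applying this at iterations $k_1$ and $k_2$ and using the $\bar M$-Lipschitz property of $H$ yields \eqref{MBy}.

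Next I would repeat the argument for each $x_i$. The $x_i$-update solves
\begin{equation*}
x_i^{k+1}=\argmin_{x_i}\Lag_\beta(x_{<i}^{k+1},x_i,x_{>i}^k,y^k,w^k).
\end{equation*}
Write $\vA\vx=A_ix_i+\sum_{j\neq i}A_j x_j$; then with the neighboring $x_{<i}^{k+1}$, $x_{>i}^k$, $y^k$, $w^k$ held fixed, restricting to $\{x_i:A_ix_i=u\}$ again makes the dual and quadratic terms constant. The inner minimization reduces to $\argmin_{x_i}\{\phi(x_{<i}^{k+1},x_i,x_{>i}^k,y^k):A_ix_i=u\}$, which by \ref{A_rank}(b) equals $F_i(u)$. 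Thus $x_i^{k+1}=F_i(A_ix_i^{k+1})$, and the $\bar M$-Lipschitz property of $F_i$ delivers \eqref{MAx}.

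The only delicate point is the observation, underlying the whole argument, that the linear-in-$u$ and quadratic-in-$u$ pieces of $\Lag_\beta$ are irrelevant to the inner constrained minimization once $u$ is fixed: without this, one could not identify the ADMM iterate with an evaluation of the Lipschitz maps from \ref{A_rank}. I also read \ref{A_rank} as asserting that the Lipschitz constant $\bar M$ for $H$ and $F_i$ in their $u$-argument is uniform in the other ``frozen'' variables, as indicated by the final sentence of \ref{A_rank}; otherwise comparing iterates across iterations $k_1\ne k_2$ (where those frozen variables differ) would not be legitimate. Once that reading is in hand, the proof is just an assembly of the two identifications above with the Lipschitz bound, with no nontrivial estimation required.
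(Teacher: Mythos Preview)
Your proposal is correct and follows essentially the same route as the paper's proof: identify $y^{k}=H(By^{k})$ and $x_i^{k}=F_i(A_ix_i^{k})$ via the ADMM optimality conditions, then invoke the $\bar M$-Lipschitz property from \ref{A_rank}. The paper's proof is terser (it simply asserts the identifications without spelling out why the dual and penalty terms drop out on the slice $By=u$), whereas you make that step explicit and also flag the needed uniformity of $\bar M$ across the frozen variables; both are welcome clarifications but do not change the argument.
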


They weaken the full column rank assumption typically imposed on matrices $A_i$ and $B$. When $A_i$ and $B$ have full column rank,  their null spaces are trivial and, therefore, $F_i,H$ reduce to linear operators and satisfy \ref{A_rank}. However, the assumption \ref{A_rank} allows non-trivial null spaces and holds for more functions. For example, if a function $f$ is a $C^2$ with its Hessian matrix $H$ bounded everywhere $\sigma_1 I \succeq H \succeq \sigma_2 I$ ($\sigma_1 > \sigma_2 > 0$), then $F$ satisfies \ref{A_rank} for any matrix $A$.
If the uniqueness fails to hold, i.e., there exists $y_1,y_2$ such that $By_1 = By_2$ and $\phi(\vx, y_1) = \phi(\vx, y_2)$, then the augmented Lagrangian cannot distinguish them, causing troubles to the boundedness of the sequence.

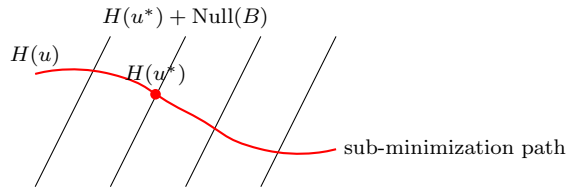
\begin{figure}[ht]
\centering
\begin{tikzpicture}[scale=1]
    \draw (0,0) coordinate (a_1) -- (1,2) coordinate (a_2);
    \draw (1,0) coordinate (a_3) -- (2,2) coordinate (a_4);
    \draw (2,0) coordinate (a_5) -- (3,2) coordinate (a_6);
    \draw (3,0) coordinate (a_5) -- (4,2) coordinate (a_6);
    \draw [red,thick] plot [smooth, tension=1] coordinates{ (0,1.5) (1,1.5) (2,1) (3,0.5) (4,0.5)};
    \draw (a_4) node [above] {$H(u^*) + \mathrm{Null}(B)$};
    \coordinate (xx) at (1.6,1.23);
    \draw[thick] (xx) node [above] {$H(u^*)$};
    \fill[red] (xx) circle (2pt);
    \draw (0,1.5) node [above] {$H(u) $};
    \draw (4,0.5) node [right] {sub-minimization path};
\end{tikzpicture}
\centering
\caption{Illustration of the assumption \ref{A_rank}, which assumes that  $H(u) = \argmin\{h(y): By = u\}$ is Lipschitz  \cite{Rosenberg1988}.}\label{fig:diagram_infimal}
\end{figure}

As for the objective function, we consider two different scenarios:
\begin{itemize}
  \item Theorem \ref{thm:main} considers the scenario where $\vx$ and $y$ are decoupled in the objective function;
  \item Theorem \ref{thm:main_theorem_convex} considers the scenario where $\vx$ and $y$ are possibly coupled but their function $\phi(\vx,y)$ is Lipschitz differentiable.
\end{itemize}
The model in the first scenario is
\begin{align}\label{Eq:originalForm1}
  \Min_{x_0,x_1,\ldots,x_p,y}\quad & \hspace{12pt}f(x_0,x_1,\ldots,x_p)\hspace{12pt} + h(y)\\\nonumber
  \St\quad & A_0x_0 +A_1x_1 +\cdots+A_px_p + B y = b,
\end{align}
where the function $f:\RR^n\to \RR\cup\{\infty\}$ ($n = \sum_{i=0}^p n_i$) is proper, continuous, and possibly nonsmooth, and the function  $h:\RR^q\to\RR$ is proper and differentiable. Both $f$ and $h$ can be nonconvex.
\begin{theorem}\label{thm:main}
Suppose that \ref{A_coercive}-\ref{A_rank} and the following assumptions hold.
\begin{enumerate}[label={A\arabic*}]
\addtocounter{enumi}{3}
\item\label{A_f} \textbf{(objective-$f$ regularity)}  $f$ has the form
$$f(\vx):=g(\vx)+ \sum_{i=0}^p f_i(x_i) $$
where
\begin{enumerate}
\item[(i)]$g(\vx)$ is Lipschitz differentiable with constant $L_g$,
\item[(ii)]Either
\begin{enumerate}
\item[a.] $f_0$ is lower semi-continuous, $f_i(x_i)$ is restricted prox-regular (Definition \ref{def:quadcvx}) for $i=1,\ldots, p$; Or,
\item[b.] The supremum $\sup\{\|d\|: x_0\in S, d\in \partial f_0(x_0)\} $ is bounded for any bounded set $S$, $f_i(x_i)$ is continuous and piecewise linear (Definition \ref{def:piecewise_linear}) for $i=1,\ldots, p$;
\end{enumerate}

\end{enumerate}
\item\label{A_h} \textbf{(objective-$h$ regularity)} $h(y)$ is Lipschitz differentiable with constant $L_h$;
\end{enumerate}
Then, Algorithm \ref{Alg:BCD} converges subsequently for any sufficiently large $\beta$ (the lower bound is given in Lemma \ref{lm:p2}), that is, starting from any $x_1^0,\ldots,x_p^0,y^0,w^0$, it generates a sequence that is bounded, has at least one limit point, and that each limit point $(\vx^*,y^*,w^*)$ is a stationary point of $\Lag_\beta$, namely, $0\in\partial \Lag_\beta(\vx^*,y^*,w^*)$.

In addition, if $\Lag_\beta$ is a Kurdyka-{\L}ojasiewicz (K{\L}) function
 \cite{lojasiewicz1993geometrie,bolte2007lojasiewicz,attouch2011}, then $(\vx^k,y^k,w^k)$ converges globally\footnote{ \textit{"Globally"} here means regardless of where the initial point is.} to the unique limit point $(\vx^*,y^*,w^*)$.
\end{theorem}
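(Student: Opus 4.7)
The plan is to use the augmented Lagrangian $\Lag_\beta$ as a Lyapunov function: show that along Algorithm \ref{Alg:BCD}'s iterates it has \emph{sufficient descent}, is \emph{bounded below}, and admits \emph{controlled subgradients}, then invoke the standard machinery to get subsequential convergence, and finally upgrade to global convergence under the KL assumption. The crucial preliminary step is to dominate the troublesome dual ascent by a primal descent. From the optimality of the $y$-update one gets $B^{\mathsf T} w^{k+1} = -\nabla h(y^{k+1})$, so $B^{\mathsf T}(w^{k+1}-w^k) = -(\nabla h(y^{k+1})-\nabla h(y^k))$. Combined with Lipschitz differentiability of $h$ in \ref{A_h}, the feasibility condition $\image{\vA}\subseteq\image{B}$ in \ref{A_feasible}, and the sub-minimization path inequality \eqref{MBy} from \ref{A_rank}, this yields $\|w^{k+1}-w^k\|^2 \le C\,L_h^2 \|y^{k+1}-y^k\|^2$ for some constant $C$ depending on $B$ only. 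This is the pivotal estimate that converts the $\frac{1}{\beta}\|w^{k+1}-w^k\|^2$ increase in $\Lag_\beta$ coming from the dual step into something controllable by the $y$-descent.

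Next I would quantify the primal descent. The $y$-update contributes at least $\frac{\beta}{2}\|B(y^{k+1}-y^k)\|^2 - \frac{L_h}{2}\|y^{k+1}-y^k\|^2$ (by writing the optimality as a proximal step on a $\beta$-strongly convex quadratic plus the smooth $h$); by \eqref{MBy} this dominates $c_1\|y^{k+1}-y^k\|^2$ for some $c_1>0$. For each $x_i$-block I would argue descent of magnitude $c_2\|x_i^{k+1}-x_i^k\|^2$. Under \ref{A_f}(ii)a, this is where \emph{restricted prox-regularity} enters: the block subgradient $d_i^{k+1}\in\partial_{x_i}\Lag_\beta$ resulting from the $x_i$-subproblem must be shown to satisfy a uniform bound $\|d_i^{k+1}\|\le M$ so that \eqref{quadcvx} applies to give a prox-regularity style descent inequality. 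This is where the \textbf{induction technique} of Lemma \ref{lm:p2} is needed: one assumes inductively that the iterates remain in a bounded set on which a uniform $M$ exists and that they have left the exclusion set $S_M$, establishes the one-step descent, deduces boundedness of the next iterate via \ref{A_coercive}, and closes the loop. Under \ref{A_f}(ii)b, the piecewise-linearity of $f_i$ makes the $x_i$-subproblem a quadratic program whose active piece gives descent directly; the boundedness of $\partial f_0$ is what substitutes for smoothness on the $x_0$ block. Summing all three effects and choosing $\beta$ larger than the explicit threshold that makes $\beta c_1 - (L_h^2 C/\beta) - L_g/2 > 0$ yields
\begin{equation*}
\Lag_\beta(\vx^{k+1},y^{k+1},w^{k+1}) \le \Lag_\beta(\vx^k,y^k,w^k) - c\bigl(\|y^{k+1}-y^k\|^2 + \textstyle\sum_i\|x_i^{k+1}-x_i^k\|^2\bigr).
\end{equation*}
Boundedness from below of $\Lag_\beta$ along the sequence follows by eliminating $w^k$ via $B^{\mathsf T} w^{k+1}=-\nabla h(y^{k+1})$ and reducing to $\phi$ plus a nonnegative quadratic on the feasible set, after which \ref{A_coercive} provides the floor and also the boundedness of $\{(\vx^k,y^k)\}$; \ref{A_rank} then bounds $\{w^k\}$.

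From here the subsequential part is routine: telescoping gives $\sum_k(\|y^{k+1}-y^k\|^2+\sum_i\|x_i^{k+1}-x_i^k\|^2)<\infty$, hence successive differences vanish, and any cluster point $(\vx^*,y^*,w^*)$ satisfies the first-order conditions $0\in\partial_{x_i}\Lag_\beta$, $0=\nabla_y\Lag_\beta$, and $\vA\vx^*+By^*=0$ (the last because $w^{k+1}-w^k\to 0$). For the global convergence under the KL assumption, I would follow the Attouch--Bolte--Svaiter template \cite{attouch2011}: construct a subgradient bound $\|\partial\Lag_\beta(\vx^{k+1},y^{k+1},w^{k+1})\|\le C'(\|y^{k+1}-y^k\|+\sum_i\|x_i^{k+1}-x_i^k\|)$ from the $x_i$ and $y$ optimality conditions (using \ref{A_feasible} to bridge the $w$-component), combine it with the sufficient-descent inequality and the KL desingularizer to show that $\sum_k\|(\vx^{k+1},y^{k+1},w^{k+1})-(\vx^k,y^k,w^k)\|<\infty$, which forces the whole sequence to converge to a single point.

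The main obstacle, by far, is the argument for restricted prox-regularity in the multi-block setting. Unlike globally prox-regular, hypomonotone, or semiconvex functions, inequality \eqref{quadcvx} only holds off the exclusion set $S_M$, and the value of $M$ itself depends on a bound on the subgradients that are produced by the ADMM iteration. Because the $x_i$-subproblems are solved jointly with a quadratic augmentation that depends on iterates of \emph{other} blocks, one cannot simply bound $\|\partial f_i(x_i^{k+1})\|$ a priori. This circularity is exactly what the induction of Lemma \ref{lm:p2} has to break: the induction hypothesis (boundedness and descent up to iteration $k$) must imply a uniform $M$ for step $k+1$, which in turn must imply that $x_i^{k+1}\notin S_M$, which then allows the one-step descent that preserves boundedness. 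Getting the quantifiers in this bootstrap right, with a $\beta$ threshold that does not itself depend on the iteration, is where the real work lies; the rest is the now-standard Lyapunov--KL template.
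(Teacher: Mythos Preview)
Your overall architecture is right and matches the paper: the augmented Lagrangian is the Lyapunov function, the dual increment is controlled by the $y$-increment via $B^{\mathsf T}w^{k}=-\nabla h(y^{k})$, and subsequential convergence plus the KL upgrade follow the Attouch--Bolte--Svaiter template exactly as you say. The gap is in how you describe the induction of Lemma~\ref{lm:p2}, and it matters.

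You present the induction as running over the \emph{iteration index} $k$, with boundedness of the iterates carried in the inductive hypothesis (``boundedness and descent up to iteration $k$ implies a uniform $M$ for step $k{+}1$''). That is not how the paper proceeds, and your version would likely fail. The block-$i$ optimality condition gives a subgradient $d_i^k\in\partial f_i(x_i^{+})$ of the form $d_i^k=-(\nabla_i g(\cdot)+A_i^{\mathsf T}w^{+})-\beta\rho_i^k$, where $\rho_i^k$ collects cross-block terms $A_i^{\mathsf T}\sum_{j>i}(A_jx_j^k-A_jx_j^{+})+A_i^{\mathsf T}(By^k-By^{+})$. Under a mere boundedness hypothesis these cross terms are bounded but scale like $\beta$, so the $M$ you would extract depends on $\beta$; the $\gamma_i$ in Definition~\ref{def:quadcvx} then depends on this $M$, and the threshold $\beta>\gamma_i\bar M^2+L_g\bar M^2$ becomes circular. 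The paper avoids this by a different order of operations: (i) prove \emph{mere} monotonicity of $\Lag_\beta$ (Lemma~\ref{lm:x} part~2 and Lemma~\ref{lm:yw}), which already yields lower-boundedness and boundedness of the whole sequence (Lemma~\ref{lm:mono}) and then $\|By^k-By^{+}\|\to0$ (Lemma~\ref{lm:By}); (ii) run the induction over the \emph{block index} $i$ in the \emph{backward} order $p,p{-}1,\ldots,1$. At $i=p$ one has $\rho_p^k=A_p^{\mathsf T}(By^k-By^{+})\to0$, so $\|d_p^k\|\le M$ with $M$ \emph{independent of $\beta$}; restricted prox-regularity then gives the quantified descent $r_p\ge\frac{\beta-\gamma_p\bar M^2-L_g\bar M^2}{2}\|A_px_p^k-A_px_p^{+}\|^2$, hence $\|A_px_p^k-A_px_p^{+}\|\to0$, which feeds the next step $i=p{-}1$, and so on. The piecewise-linear case (A4(ii)b) uses the same backward induction plus Lemma~\ref{lm:pl} to force consecutive iterates into the same linear piece, yielding \eqref{quadcvx1} with $\gamma_i=0$.

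Two smaller points. First, the quantified descent in P2 (and the subgradient bound in P3) deliberately omits the $x_0$-block: only $r_0\ge0$ is used there, which is why $f_0$ may be any lower semi-continuous function. Your sketch claims descent $c_2\|x_i^{k+1}-x_i^k\|^2$ for \emph{each} block, which overstates what is available and what is needed. Second, the paper works throughout with $\|A_i(x_i^{k+1}-x_i^k)\|$ and $\|B(y^{k+1}-y^k)\|$ rather than the raw increments; Assumption~\ref{A_rank} (via Lemma~\ref{lemma:reverse_control}) is what lets you pass between the two, and it is precisely this that handles the rank-deficient case you would otherwise miss.
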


Assumptions \ref{A_f} and \ref{A_h}  regulate the objective functions. None of the functions needs to be convex. $f_0$ can be any lower semi-continuous function, and the non-Lipschitz differentiable parts $f_1,\ldots,f_n$ of $f$  shall satisfy either Definition \ref{def:piecewise_linear} or Definition \ref{def:quadcvx}. Under Assumptions \ref{A_f} and \ref{A_h}, the augmented Lagrangian function $\Lag_{\beta}$ is lower semi-continuous.

It will be easy to see, from our proof in Section 3.3, that the Lipschitz differentiable assumption on $g$ can be relaxed to hold just in any bounded set, since the boundedness of $\{\vx^k\}$ is established before that property is used in our proof. Consequently, $g$ can be functions like $e^x$, whose derivative is not globally Lipschitz.


Functions satisfying the K{\L} inequality include real analytic functions, semi-algebraic functions and locally strongly convex functions (more information can be referred to Sec. 2.2 in {\cite{xu2014a}} and references therein).

In the second scenario, $\vx$ and $y$ can be coupled in the objective as shown in \eqref{Eq:originalForm0}, but the objective needs to be smooth.
\begin{theorem}\label{thm:main_theorem_convex}
Suppose that \ref{A_coercive}-\ref{A_rank} hold and $\phi$ in \eqref{Eq:originalForm0} is Lipschitz differentiable with constant $L_\phi$. Then, Algorithm \ref{Alg:BCD} has the same subsequential and global convergence results as stated in Theorem \ref{thm:main}.
%
\end{theorem}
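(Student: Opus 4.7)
The strategy is to mirror the proof of Theorem~\ref{thm:main} while exploiting the global Lipschitz differentiability of $\phi$ in place of the separable decomposition $g+\sum f_i + h$. Smoothness of $\phi$ with constant $L_\phi$ subsumes both the $g$-regularity of \ref{A_f}(i) and the $h$-regularity of \ref{A_h}, and eliminates the nonsmooth terms $f_i$ entirely, so the whole nonsmooth machinery (Definitions~\ref{def:piecewise_linear} and \ref{def:quadcvx}, restricted prox-regularity, the induction argument behind Lemma~\ref{lm:p2}) becomes unnecessary and several estimates simplify. The proof reduces to three ingredients: (i) control of the dual by the primal; (ii) sufficient descent of $\Lag_\beta$; (iii) passage from descent to subsequential and KL-global convergence.

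For (i), the $y$-optimality condition gives $\nabla_y\phi(\vx^{k+1},y^{k+1}) + B^\top w^{k+1} = 0$. By \ref{A_feasible}, $w^{k+1}-w^k = \beta(\vA\vx^{k+1}+By^{k+1})$ lies in $\image{B}$; iterating from a $w^0 \in \image{B}$ keeps $w^k \in \image{B}$, on which $\|w^{k+1}-w^k\|$ is equivalent to $\|B^\top(w^{k+1}-w^k)\|$. Combined with $L_\phi$-Lipschitz differentiability of $\nabla_y\phi$ (now in both arguments, not just $y$), this yields
\begin{equation*}
\|w^{k+1}-w^k\| \le C\bigl(\|\vx^{k+1}-\vx^k\|+\|y^{k+1}-y^k\|\bigr)
\end{equation*}
for a constant $C$ depending on $B$ and $L_\phi$; this is the analog of the $\nabla h$-based dual bound used in Theorem~\ref{thm:main}, the only new feature being the $\|\vx^{k+1}-\vx^k\|$ term produced by the coupling.

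For (ii), each subproblem minimizes $\Lag_\beta$ in one block; the sub-Lagrangian in $x_i$ has gradient that is $(L_\phi+\beta\|A_i\|^2)$-Lipschitz, and analogously for $y$. Using Lemma~\ref{lemma:reverse_control} to convert norms on $A_i x_i$ and $By$ into norms on $x_i$ and $y$, the standard block-minimization argument produces, for $\beta$ large enough,
\begin{equation*}
\Lag_\beta^{k+1} - \Lag_\beta^k \le -\sum_{i=0}^p c_i\|x_i^{k+1}-x_i^k\|^2 - c_y\|y^{k+1}-y^k\|^2 + \beta^{-1}\|w^{k+1}-w^k\|^2.
\end{equation*}
Absorbing the last term via (i) and then enlarging $\beta$ once more gives net descent of the form $\Lag_\beta^{k+1}-\Lag_\beta^k \le -c\bigl(\|\vx^{k+1}-\vx^k\|^2+\|y^{k+1}-y^k\|^2\bigr)$ for some $c>0$.

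For (iii), coercivity~\ref{A_coercive} together with Lemma~\ref{lemma:reverse_control} and the identity $\Lag_\beta^k = \phi(\vx^k,y^k)+\dotp*{w^k,\vA\vx^k+By^k}+\frac{\beta}{2}\|\vA\vx^k+By^k\|^2$ (with the dual-bound from (i)) forces $\Lag_\beta^k$ to be bounded below along the iterates; combined with descent, this yields summability of $\|\vx^{k+1}-\vx^k\|^2+\|y^{k+1}-y^k\|^2$, vanishing constraint residuals, and boundedness of $(\vx^k,y^k,w^k)$. Any limit point is then feasible, and stationarity $0\in\partial\Lag_\beta(\vx^*,y^*,w^*)$ follows by passing to the limit in the subproblem optimality conditions, which is clean because $\nabla\phi$ is continuous. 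Global convergence under the KL property of $\Lag_\beta$ follows from the Attouch--Bolte--Svaiter template: the descent inequality together with the subgradient bound $\mathrm{dist}(0,\partial\Lag_\beta^k)\le C'(\|\vx^k-\vx^{k-1}\|+\|y^k-y^{k-1}\|)$ (also a direct consequence of smoothness) imply finite iterate length. I expect the main obstacle to be the simultaneous calibration of $\beta$: because $\phi$ couples $\vx$ and $y$, the dual estimate contributes a $\|\vx^{k+1}-\vx^k\|$ term that must be absorbed by the $x$-block descents rather than only the $y$-descent, and one must verify that a single threshold on $\beta$ produces net descent, asymptotic feasibility, and the KL subgradient bound at once. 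The corresponding balancing in the proof of Theorem~\ref{thm:main} carries through with $f_i\equiv 0$ and $h\equiv 0$, modulo this one extra coupling term.
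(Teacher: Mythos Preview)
Your proposal is correct and takes essentially the same approach as the paper: both verify \ref{P1}--\ref{P3} of Proposition~\ref{prop:convergence} by bounding the dual increment through $B^\top w^k = -\nabla_y\phi(\vx^k,y^k)$ (with the extra $\|\vx^{k+1}-\vx^k\|$ coupling term you correctly flag as the one new obstacle), deriving sufficient descent from block minimization combined with Lemma~\ref{lemma:reverse_control}, and obtaining lower boundedness of $\Lag_\beta$ from \ref{A_coercive}--\ref{A_rank} and the $y$-optimality condition. One small remark: you need not assume $w^0\in\image{B}$, since only the difference $w^{k+1}-w^k\in\image{B}$ is required for the norm-equivalence step.
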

Although Theorems \ref{thm:main} and \ref{thm:main_theorem_convex} impose different conditions on the objective functions, their proofs are similar. Hence, we will focus on proving Theorem \ref{thm:main} first and leave the proof of Theorem \ref{thm:main_theorem_convex} to the Appendix.
\section{Proof}\label{sec:proof}
\subsection{Keystones}
The following properties hold for Algorithm \ref{Alg:BCD} under our assumptions. Here, we first list them and present Proposition \ref{prop:convergence}, which  establishes convergence assuming these properties. Then in the next two subsections, we prove these properties.
\begin{enumerate}[label={P\arabic*}]
\item\label{P1} \textbf{(Boundedness)} $\{\vx^k, y^k, w^k\}$ is bounded, and $\Lag_\beta(\vx^k,y^k,w^k)$ is lower bounded.
\item\label{P2} \textbf{(Sufficient descent)} There is a constant $C_1(\beta) >0$ such that for all sufficiently large $k$, we have
\begin{align}\label{Cond:1}
&\Lag_\beta(\vx^{k},y^k,w^{k}) - \Lag_\beta(\vx^{k+1},y^{k+1},w^{k+1}) \geq C_1(\beta) \Big(\|B (y^{k+1} - y^k)\|^2 +\sum_{i=1}^p \|A_i(x_i^{k}-x_i^{k+1})\|^2\Big).
\end{align}
\item\label{P3} \textbf{(Subgradient bound)} There exists $C_2(\beta) > 0$ and $d^{k+1}\in \partial \Lag_\beta(\vx^{k+1},y^{k+1},w^{k+1})$ such that
\begin{align}\label{Cond:2}
\|d^{k+1}\| &\leq C_2(\beta) \Big(\|B(y^{k+1}-y^k)\|+\sum_{i=1}^p\|A_i(x_i^{k+1}-x_i^k)\|\Big).
\end{align}
It is our intention to start $i$ at 1, thus skipping the $x_0$-block, in \eqref{Cond:1} and \eqref{Cond:2}.
\item\label{P4} \textbf{(Limiting continuity)} If $(\vx^*, y^*, w^*)$ is the limit point of a sub-sequence $(\vx^{k_s}, y^{k_s}, w^{k_s})$ for $s \in \mathbb N$, then $\Lag_\beta(\vx^*, y^*, w^*) = \lim_{s\rightarrow \infty} \Lag_\beta(\vx^{k_s}, y^{k_s}, w^{k_s})$.
\end{enumerate}
The proposition below is standard and not new though it does not appear exactly in the literature.
\begin{proposition}\label{prop:convergence} Suppose that when an algorithm is applied to the problem \eqref{Eq:originalForm1}, its sequence $(\vx^k,y^k,w^k)$ satisfies \ref{P1}--\ref{P4}. Then, the sequence has at least a limit point $(\vx^*,y^*,w^*)$, and any limit point $(\vx^*,y^*,w^*)$ is a stationary point. That is, $0\in \partial \Lag_\beta(\vx^*,y^*,w^*)$, or equivalently,
\begin{subequations}
\begin{align}
0&=\vA\vx^*+By^*,\\
0&\in \partial f(\vx^*) + \vA^T w^*,\\
0&\in \partial h(y^*) + B^T w^*.
\end{align}
\end{subequations}
Furthermore, the running best rates\footnote{A nonnegative sequence ${a_k}$ induces its running best sequence $b_k=\min\{a_i : i\le k\}$; therefore, ${a_k}$ has running best rate of $o(1/k)$ if $b_k=o(1/k)$.} of the sequences $\{\|B (y^{k+1} - y^k)\|^2 +\sum_{i=1}^p \|A_i(x_i^{k}-x_i^{k+1})\|^2\}$  and  $\{\|d^{k+1}\|\}$ are $o(\frac{1}{k})$ and $o(\frac{1}{\sqrt{k}})$, respectively.
Moreover, if $\Lag_\beta$ is a K{\L} function, then $(\vx^k,y^k,w^k)$ converges globally to the unique point $(\vx^*,y^*,w^*)$.
\end{proposition}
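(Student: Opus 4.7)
The plan is to extract four ingredients from \ref{P1}--\ref{P3}, in order, and then combine them with the Kurdyka--{\L}ojasiewicz machinery at the end.

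\textbf{Step 1 (subsequence and stationarity).} By \ref{P1} the sequence $\{(\vx^k,y^k,w^k)\}$ is bounded, so Bolzano--Weierstrass yields a limit point $(\vx^*,y^*,w^*)$. Because $\Lag_\beta(\vx^k,y^k,w^k)$ is non-increasing (by \ref{P2}) and lower bounded (by \ref{P1}), it converges to some finite $\Lag^*$; telescoping \ref{P2} gives the summability
\begin{equation*}
\sum_{k=0}^\infty \Big(\|B(y^{k+1}-y^k)\|^2+\sum_{i=1}^p\|A_i(x_i^{k+1}-x_i^k)\|^2\Big) \le \frac{\Lag_\beta(\vx^0,y^0,w^0)-\Lag^*}{C_1(\beta)}<\infty.
\end{equation*}
In particular the summands tend to zero, and Lemma~\ref{lemma:reverse_control} then upgrades this to $\|y^{k+1}-y^k\|\to 0$ and $\|x_i^{k+1}-x_i^k\|\to 0$ for $i=1,\dots,p$. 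Plugging these into \ref{P3} produces $d^{k+1}\in\partial\Lag_\beta(\vx^{k+1},y^{k+1},w^{k+1})$ with $\|d^{k+1}\|\to 0$. Along a subsequence converging to $(\vx^*,y^*,w^*)$, I would invoke the continuity of $\Lag_\beta$ (guaranteed by \ref{A_f},\ref{A_h}) together with the closedness of the limiting subdifferential graph for lower semicontinuous functions to pass to the limit $0\in\partial\Lag_\beta(\vx^*,y^*,w^*)$. Splitting the subgradient of $\Lag_\beta$ in $\vx$, $y$, $w$ separately and using $\vA\vx^*+By^*=0$ to cancel the penalty terms yields the three displayed first-order conditions.

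\textbf{Step 2 (running best rates).} From the summability in Step 1, write $a_k := \|B(y^{k+1}-y^k)\|^2+\sum_{i=1}^p\|A_i(x_i^{k+1}-x_i^k)\|^2$. For any $K$,
\begin{equation*}
K\cdot \min_{K<k\le 2K} a_k \le \sum_{k=K+1}^{2K} a_k \le \sum_{k>K} a_k \xrightarrow{K\to\infty} 0,
\end{equation*}
so the running-best sequence of $\{a_k\}$ is $o(1/k)$. Squaring \ref{P3} and applying the Cauchy--Schwarz inequality,
\begin{equation*}
\|d^{k+1}\|^2 \le (p+1)\,C_2(\beta)^2\, a_k,
\end{equation*}
which is summable; the same sliding-window trick then shows the running best of $\|d^{k+1}\|^2$ is $o(1/k)$, and hence of $\|d^{k+1}\|$ is $o(1/\sqrt k)$.

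\textbf{Step 3 (global convergence under K\L).} This is the standard Attouch--Bolte--Svaiter route, adapted to the augmented-Lagrangian setting of Bolte et al. Let $z^k:=(\vx^k,y^k,w^k)$ and $\Lag^*=\lim_k\Lag_\beta(z^k)$. On the limit-point set (which is nonempty, compact, and contained in a level set of $\Lag_\beta$ where $\Lag_\beta\equiv\Lag^*$) the K\L\ inequality furnishes a single desingularising function $\varphi$ on a uniform neighbourhood. I would combine
\begin{equation*}
\Lag_\beta(z^k)-\Lag_\beta(z^{k+1})\ge C_1(\beta)\,a_{k-1}, \qquad \|d^{k+1}\|\le \sqrt{(p+1)}\,C_2(\beta)\,\sqrt{a_k},
\end{equation*}
with $\varphi'(\Lag_\beta(z^k)-\Lag^*)\cdot\|d^k\|\ge 1$ and the concavity of $\varphi$, to derive the classical recursion
\begin{equation*}
2\sqrt{a_k} \le \sqrt{a_{k-1}} + \text{const}\cdot\bigl(\varphi(\Lag_\beta(z^k)-\Lag^*)-\varphi(\Lag_\beta(z^{k+1})-\Lag^*)\bigr).
\end{equation*}
Telescoping this gives $\sum_k \sqrt{a_k}<\infty$, which via Lemma~\ref{lemma:reverse_control} and \ref{P3} forces $\sum_k\|z^{k+1}-z^k\|<\infty$, so $\{z^k\}$ is Cauchy and converges to the unique limit $(\vx^*,y^*,w^*)$.

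\textbf{Main obstacles.} The delicate part is Step 1: ensuring that the subgradient limit really lives in $\partial\Lag_\beta$ at $(\vx^*,y^*,w^*)$. Under \ref{A_f}(ii)a the nonsmooth blocks are only lower semicontinuous, so I have to verify that $\Lag_\beta(\vx^{k_j},y^{k_j},w^{k_j})\to\Lag_\beta(\vx^*,y^*,w^*)$ along the subsequence -- this uses the continuity remark the paper makes after Theorem~\ref{thm:main} and the fact that both $g$ and $h$ are Lipschitz differentiable. The second subtle point is that the K\L\ argument in Step 3 requires a \emph{uniform} desingularising function over the limit-point set, which needs the already-established precompactness of $\{z^k\}$ together with the constant value of $\Lag_\beta$ on the limit-point set; both follow from Steps 1--2.
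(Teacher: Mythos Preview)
Your proof is correct and follows essentially the same route as the paper's, which simply cites \cite[Lemma~1.2]{Deng2013} (or \cite[Theorem~3.3.1]{Knopp1956}) for the $o(1/k)$ running-best rate and \cite[Theorem~2.9]{attouch2011} for the K\L\ step, whereas you spell both out. One small slip: in Step~3 you wrote $\Lag_\beta(z^k)-\Lag_\beta(z^{k+1})\ge C_1(\beta)\,a_{k-1}$, but \ref{P2} gives $C_1(\beta)\,a_k$; the recursion still closes because it is the subgradient bound $\|d^k\|\le\text{const}\,\sqrt{a_{k-1}}$ that supplies the $a_{k-1}$ term.
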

\begin{proof} The proof is standard. Similar steps are found in, for example, \cite{attouch2011,xu2014a}.

By \ref{P1}, the sequence $(\vx^k,y^k,w^k)$ is bounded, so there exist a convergent subsequence and a limit point, denoted by  $(\vx^{k_s},y^{k_s},w^{k_s})_{s\in \mathbb{N}} \rightarrow (\vx^*,y^*,w^*)$ as $s\rightarrow +\infty$. By \ref{P1} and \ref{P2}, $\Lag_\beta(\vx^{k},y^k,w^{k})$ is monotonically nonincreasing and lower bounded, and therefore $\|A_i x_i^{k} - A_i x_i^{k+1}\|\rightarrow 0$ and $\|B y^{k} - By^{k+1}\|\rightarrow 0$ as $k\rightarrow \infty$. Based on \ref{P3},  there exists $d^{k}\in \partial \Lag_{\beta}(\vx^{k},y^{k},w^{k})$ such that $\|d^{k}\|\rightarrow 0$.
In particular, $\|d^{k_s}\|\rightarrow 0$ as $s\to\infty$. Based on \ref{P4}, $\Lag_\beta(\vx^*, y^*, w^*) = \lim_{s} \Lag_\beta(\vx^{k_s}, y^{k_s}, w^{k_s})$. By definition of general subgradient\cut{ (see Definition \ref{def:general_sub_gradient} or} \cite[Definition 8.3]{VariationalAnalysis-RockWets}, we have $0\in \partial \Lag_{\beta}(\vx^*,y^*,w^*).$

The running best rate of the sequence $\{\|B (y^{k+1} - y^k)\|^2 +\sum_{i=1}^p \|A_i(x_i^{k}-x_i^{k+1})\|^2\}$ can be easily obtained via taking advantage of \cite[Lemma 1.2]{Deng2013} or \cite[Theorem 3.3.1]{Knopp1956}. By \eqref{Cond:2}, it is obvious that the running best rate of the sequence $\{\|d^{k+1}\|\}$ is $o(\frac{1}{\sqrt{k}})$.

Similar to the proof of Theorem 2.9 in \cite{attouch2011}, we can claim the global convergence of the considered sequence $(\vx^k,y^k,w^k)_{k\in \mathbb{N}}$ under the K{\L} assumption of $\Lag_{\beta}$.
\qed
\end{proof}
In \ref{P2}, the sufficient descent inequality \eqref{Cond:1} is only required for any sufficiently large $k$, not all $k$. In our analysis, \ref{P1} gives subsequence convergence, \ref{P2} measures the augmented Lagrangian descent, and  \ref{P3} bounds the subgradient by total point changes. The reader may still obtain \ref{P1}--\ref{P4} when generalizing Algorithm \ref{Alg:BCD}, for example, by replacing the direct minimization subproblems to prox-gradient or inexact subproblems and by relaxing the ordering in which the primal variables are updated.

\cut{Our results remain valid when the subproblems in Algorithm \ref{Alg:BCD} are replaced by
\begin{align*}
x_i^{k+1} \gets \argmin_{x_i} \Lag_\beta(x^{k+1}_{<i},x_i,x_{>i}^k,y^k,w^k) + C(x_i - x^k_i) A_i^TA_i(x_i - x_i^k),
\end{align*}
where $C>0$, }
\subsection{Preliminaries}

In this subsection, we give some useful lemmas that will be used in the main proof.
To save space, throughout this section we assume assumptions \ref{A_coercive}--\ref{A_h} hold, and let
\beq\label{itrsimple}
(\vx^+,y^+,w^+):=(\vx^{k+1},y^{k+1},w^{k+1}).
\eeq
In addition, we let $A_{<s}x_{<s} := \sum_{i<s} A_i x_i$ and, in a similar fashion, $A_{>s}x_{>s}:= \sum_{i>s}A_i x_i$.

\begin{lemma}\label{lemma:well_define}
If $\beta>\bar{M}^2L_h$ ($\bar{M}$ is defined in \ref{A_rank}), all the subproblems in Algorithm \ref{Alg:BCD} are well defined.
\end{lemma}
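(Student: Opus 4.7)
The plan is to show existence of a minimizer for each subproblem by (i) using the Lipschitz sub-minimization maps $H$ and $F_i$ from assumption \ref{A_rank} to reduce the subproblem to an outer minimization over the closed set $\image{B}$ or $\image{A_i}$, and (ii) establishing existence in the reduced variable by combining lower semicontinuity with coercivity. The stated threshold $\beta>\bar{M}^2L_h$ will enter precisely through the coercivity step for the $y$-update.

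For the $y$-subproblem, I would parameterize $y$ via $u=By\in\image{B}$ and $y=H(u)$. Since $\langle w^k,By\rangle+\tfrac{\beta}{2}\|\vA\vx^{k+1}+By\|^2$ depends on $y$ only through $u$, assumption \ref{A_rank}(a) yields
\[
\min_y \Lag_\beta(\vx^{k+1},y,w^k) \;=\; \min_{u\in\image{B}} G(u), \quad G(u):=h(H(u))+\langle w^k,u\rangle+\tfrac{\beta}{2}\|\vA\vx^{k+1}+u\|^2,
\]
a continuous function. For coercivity, Lipschitz differentiability of $h$ supplies the descent-lemma bound
\[
h(H(u))\;\geq\; h(H(0))+\langle\nabla h(H(0)),H(u)-H(0)\rangle-\tfrac{L_h}{2}\|H(u)-H(0)\|^2,
\]
and the Lipschitz estimate $\|H(u)-H(0)\|\leq\bar{M}\|u\|$ from Lemma \ref{lemma:reverse_control} turns this into $h(H(u))\geq -\tfrac{L_h\bar{M}^2}{2}\|u\|^2-O(\|u\|)+\text{const}$. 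Adding the AL quadratic $\tfrac{\beta}{2}\|u\|^2+O(\|u\|)$ gives $G$ a leading $\|u\|^2$-coefficient of $(\beta-L_h\bar{M}^2)/2$, which is strictly positive exactly under the stated hypothesis. Hence $G$ is coercive on the closed set $\image{B}$, attains its infimum at some $u^\star$, and $y^{k+1}:=H(u^\star)$ solves the $y$-subproblem.

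The $x_i$-subproblems are handled by the same reduction using \ref{A_rank}(b) and $F_i$: each becomes $\min_{u\in\image{A_i}}\bigl[\Phi_i(u)+\langle w^k,u\rangle+\tfrac{\beta}{2}\|u+c_i\|^2\bigr]$ with $\Phi_i(u):=\phi(x_{<i}^{k+1},F_i(u),x_{>i}^k,y^k)$, which is lsc by composition. Under Scenario 2 the argument is an exact analog with $L_\phi$ in place of $L_h$. Under Scenario 1 the Lipschitz-differentiable part $g$ contributes at most $-\tfrac{L_g\bar{M}^2}{2}\|u\|^2$ by the identical estimate, while an at-most-quadratic lower bound on $f_i\circ F_i$ is produced either from piecewise linearity (which yields a linear bound directly) or from the restricted prox-regularity inequality in Definition \ref{def:quadcvx} applied on bounded sets carved out by the coercivity assumption \ref{A_coercive}. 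The main obstacle will be exactly this last step: obtaining a quadratic lower bound for the nonsmooth $f_i\circ F_i$ without a descent lemma; once that is in hand, the AL quadratic dominates for $\beta$ above the stated threshold and existence of a minimizer follows just as for the $y$-subproblem.
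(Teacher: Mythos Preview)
Your treatment of the $y$-subproblem is correct and matches the paper's argument: parameterize by $u=By$, use the descent lemma together with the Lipschitz bound $\|H(u)-H(0)\|\le \bar M\|u\|$, and conclude coercivity of $G$ from $\beta>\bar M^2L_h$.

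The gap is in the $x_i$-subproblems. Your plan is to bound $f_i\circ F_i$ from below by a quadratic and then let the augmented-Lagrangian quadratic dominate. This fails on two counts. First, for the $x_0$-block in Scenario~1a, $f_0$ is merely lower semi-continuous; there is no quadratic lower bound available, and no $\beta$-threshold can rescue you. Second, the restricted prox-regularity inequality in Definition~\ref{def:quadcvx} is a \emph{local} statement on a bounded set $T$ minus an exclusion set $S_M$; it does not yield the global quadratic lower bound you need for coercivity, and invoking \ref{A_coercive} to ``carve out a bounded set'' is circular here, since nothing yet tells you that candidate minimizers sit in any bounded region. Even if you patched these steps, the resulting threshold on $\beta$ would involve $L_g$ and the $\gamma_i$'s, which contradicts the lemma's explicit bound $\beta>\bar M^2L_h$.

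The paper takes a different route that sidesteps all of this and explains why the threshold depends only on $L_h$. For the $x_i$-subproblem it uses \ref{A_feasible} to pick $u=H\bigl(-A_{<i}x^+_{<i}-A_{>i}x^k_{>i}-A_ix_i\bigr)$, so that $(x^+_{<i},x_i,x^k_{>i},u)$ lies in the feasible set, and then adds and subtracts $h(u)$:
\[
f(x^+_{<i},x_i,x^k_{>i})+\tfrac{\beta}{2}\|\cdot\|^2
=\underbrace{\bigl[f(x^+_{<i},x_i,x^k_{>i})+h(u)\bigr]}_{\text{coercive in }x_i\text{ by \ref{A_coercive}}}
\;+\;\underbrace{\bigl[-h(u)+\tfrac{\beta}{2}\|Bu-By^k-\tfrac{1}{\beta}w^k\|^2\bigr]}_{\text{bounded below when }\beta>\bar M^2L_h}.
\]
The first bracket is coercive directly from \ref{A_coercive}, with no structural hypotheses on $f$ beyond lower semicontinuity; the second bracket is handled by the same $h$-bound you already proved for the $y$-step. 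This is the missing idea you need: route the $x_i$-coercivity through \ref{A_coercive} via the auxiliary feasible point $u$, rather than through growth estimates on $f_i$.
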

This lemma is on its own, so we leave its proof to the appendix.

\begin{lemma}[bound dual by primal]\label{lemma:w_derivative}
\cut{Define} Let $\lambda_{++}(B^TB)$ be the smallest strictly-positive eigenvalue of $B^TB$, $C \triangleq L_h \bar{M}\lambda_{++}^{-1/2}(B^TB)$. For all $k\in \mathbb{N}$, it holds that
\begin{enumerate}
\item[(a)]$ B^Tw^k = - \nabla h(y^k)$.
\item[(b)] \label{lemma:y_control_w}$\|w^+ - w^k\| \leq C \|By^+ - By^k\|.$
\end{enumerate}

\end{lemma}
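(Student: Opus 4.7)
The plan is to exploit the first-order optimality condition of the $y$-subproblem together with the dual update to get (a) as an algebraic identity, and then combine (a) with Lipschitz smoothness of $h$ and the reverse control \eqref{MBy} from Lemma \ref{lemma:reverse_control} to get (b). The only nontrivial ingredient is a pseudoinverse-type bound $\|v\|\le \lambda_{++}^{-1/2}(B^TB)\,\|B^Tv\|$ valid on $\image{B}$, which will be applied to $v = w^+-w^k$.

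\textbf{Part (a).} Under the hypotheses of Theorem \ref{thm:main}, the objective splits as $\phi(\vx,y)=f(\vx)+h(y)$ with $h$ Lipschitz differentiable, so the $y$-subproblem
$$ y^{k+1} \in \argmin_y\, h(y) + \dotp*{w^k, \vA\vx^{k+1}+By} + \tfrac{\beta}{2}\|\vA\vx^{k+1}+By\|^2 $$
is smooth and its first-order optimality condition is
$$ 0 = \nabla h(y^{k+1}) + B^T w^k + \beta B^T\bigl(\vA\vx^{k+1}+By^{k+1}\bigr). $$
Substituting the dual update $w^{k+1}=w^k+\beta(\vA\vx^{k+1}+By^{k+1})$ collapses this to $B^Tw^{k+1} = -\nabla h(y^{k+1})$, which is (a) (shifted by one iteration; it holds for every $k\ge 1$, and one can absorb $k=0$ by assuming the initialization satisfies it, or simply restating the lemma starting from $k\ge 1$).

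\textbf{Part (b).} From (a) applied at $k{+}1$ and $k$,
$$ B^T(w^+ - w^k) = -\bigl(\nabla h(y^+) - \nabla h(y^k)\bigr). $$
Because $\image{\vA}\subseteq\image{B}$ by \ref{A_feasible}, the dual increment $w^+-w^k = \beta(\vA\vx^+ + By^+)$ lies in $\image{B}$. On this subspace the map $v\mapsto B^Tv$ is injective, and a standard SVD calculation shows $\|v\|\le \lambda_{++}^{-1/2}(B^TB)\,\|B^Tv\|$ for every $v\in\image{B}$ (write $v=U_r a$ with $U_r$ spanning $\image{B}$; then $\|B^Tv\|\ge \sqrt{\lambda_{++}(B^TB)}\,\|a\|=\sqrt{\lambda_{++}(B^TB)}\,\|v\|$). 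Applying this to $v=w^+-w^k$, then the $L_h$-Lipschitz continuity of $\nabla h$, and finally \eqref{MBy} gives
\begin{align*}
\|w^+-w^k\|
&\le \lambda_{++}^{-1/2}(B^TB)\,\|B^T(w^+-w^k)\| \\
&= \lambda_{++}^{-1/2}(B^TB)\,\|\nabla h(y^+)-\nabla h(y^k)\| \\
&\le \lambda_{++}^{-1/2}(B^TB)\, L_h \|y^+-y^k\| \\
&\le \lambda_{++}^{-1/2}(B^TB)\, L_h \bar M\, \|By^+-By^k\| = C\,\|By^+-By^k\|,
\end{align*}
as claimed.

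\textbf{Main obstacle.} There is no serious obstacle; the only point requiring a bit of care is the inclusion $w^+-w^k\in\image{B}$, which is precisely what \ref{A_feasible} is designed to provide, and the pseudoinverse-type inequality on that subspace. This lemma is essentially the mechanism by which $y$ ``controls'' the dual $w$, a phenomenon already previewed in the discussion of the divergent-ALM/convergent-ADMM example, and it is the key estimate that will later let the augmented-Lagrangian descent absorb the $\|w^+-w^k\|^2$ term produced by the $w$-update.
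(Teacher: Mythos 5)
Your proposal is correct and follows essentially the same route as the paper: part (a) from the $y$-subproblem optimality condition combined with the dual update, and part (b) from the inclusion $w^+-w^k=\beta(\vA\vx^++By^+)\in\image{B}$ (via \ref{A_feasible}), the pseudoinverse-type bound $\|v\|\le\lambda_{++}^{-1/2}(B^TB)\|B^Tv\|$ on $\image{B}$, the Lipschitz continuity of $\nabla h$, and \eqref{MBy}. Your explicit remark about the $k=0$ indexing in part (a) is a minor point the paper glosses over, but otherwise the two arguments coincide.
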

\begin{proof} Part (a) follows directly from the optimality condition of $y^k$:
$0 = \nabla h(y^k) + B^T w^{k-1} + \beta B^T (A\vx^k + By^k),$
and $w^{k} = w^{k-1} + \beta\left(\vA\vx^{k}+By^{k}\right)$.

Then let us prove Part (b).
Since $w^+ - w^k = \beta (A\vx^+ + By^+) \in \mathrm{Im}(B)$, we get \[\|w^+ - w^k \| \leq \lambda_{++}^{-1/2}(B^TB) \|B^T(w^+ - w^k)\|= \lambda_{++}^{-1/2}(B^TB)\|\nabla h(y^+) - \nabla h(y^k) \|\le C \|By^+ - By^k\|.\]
The last inequality follows from the Lipschitz property of $\nabla h$ and Lemma \ref{lemma:reverse_control}.
\qed
\end{proof}

\subsection{Main proof}
This subsection proves Theorem \ref{thm:main}  for Algorithm \ref{Alg:BCD} under {Assumptions \ref{A_coercive}--\ref{A_h}}.
For all $k\in\mathbb{N}$ and $i=0,\ldots,p$,  because of the optimality of $x_i^k$,
we can introduce the following \emph{general subgradients} $d_i^k$ and $\bar{d}_i^k$,
\begin{align}
\label{fsubg}
\bar{d}_i^k&:=-(A_i^Tw^+ +\beta \rho_{i}^k)\in \partial_i  f(x^+_{< i},x^+_i,x^k_{>i}),\\
\label{condpi1}
d_i^k&:=-\nabla_{i}g(x^+_{<i},x^+_i,x^{k}_{>i})+\bar{d}_i^k \in \partial f_i(x^+_i),
\end{align}
where $$\rho_{i}^k:= A_i^T(A_{>i}x_{>i}^{k}-A_{>i}x_{>i}^+)+A_i^T(By^{k}-By^+).$$

The next two lemmas estimate the descent of $\Lag_\beta(\vx,y,w)$ at each iteration.
\begin{lemma}[descent of $\Lag_\beta$ during $x_i$ update]\label{lm:x} The iterates in Algorithm \ref{Alg:BCD} satisfy
\begin{enumerate}
\item $\Lag_\beta(x^+_{< i},{\boldsymbol x^k_i},x^k_{>i},y^k,w^k)\ge \Lag_\beta(x^+_{< i},{\boldsymbol x^+_i},x^k_{>i},y^k,w^k)$, $i=0,\ldots,p$;\\[-4pt]
\item $\Lag_\beta(\vx^k,y^k,w^k)\ge \Lag_\beta(\vx^+,y^k,w^k)$;\\[-4pt]
\item $\Lag_\beta(\vx^k,y^k,w^k)-\Lag_\beta(\vx^+,y^k,w^k) =\sum_{i=0}^pr_i$, where
\begin{align}
r_i&:=f(x^+_{< i},x^k_i,x^k_{>i})-
f(x^+_{< i},x^+_i,x^k_{>i})
-\dotp{\bar{d}_i^k,x^k_i-x^{+}_i}+\frac{\beta}{2}\|A_ix^k_i-A_i x^+_i\|^2\geq 0,
\label{formri}
\end{align}
where $\bar{d}_i^{k}$ is defined in \eqref{fsubg}.
\item For $i = 1,\ldots, p$ (without the block $i = 0$), if
\beq\label{quadcvx1}
f_i(x_i^k)+\frac{\gamma_i}{2}\|x_i^k-x_i^+\|^2\ge f_i(x_i^+)+\dotp{d_i^k,x_i^k-x_i^+},
\eeq
holds with constant $\gamma_i\geq 0$ (later, this condition will be shown to hold), then we have
\beq \label{rilb}
r_i\ge \frac{\beta-\gamma_i  \bar{M}^2-L_g \bar{M}^2}{2}\|A_ix_i^k-A_ix_i^+\|^2,
\eeq
where the constants $L_g$ and $\bar{M}$  are defined in Assumptions \ref{A_f} and \ref{A_rank}, respectively.
\end{enumerate}
\end{lemma}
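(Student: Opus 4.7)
The plan is to dispatch the four parts in the order stated, since each naturally builds on the previous. Part (1) is just the definition of the $x_i$-subproblem: $x_i^+$ minimizes $\Lag_\beta$ with $x_{<i}^+, x_{>i}^k, y^k, w^k$ frozen, so substituting $x_i^k$ in place of $x_i^+$ can only weakly increase the value. Part (2) follows immediately by telescoping the $p+1$ inequalities from Part (1) across $i=0,\ldots,p$.

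Part (3) is the main algebraic identity. The approach is to fix $i$, introduce $F_i(x_i):=\Lag_\beta(x_{<i}^+, x_i, x_{>i}^k, y^k, w^k)$, and compute $F_i(x_i^k)-F_i(x_i^+)$ term by term. The objective contributes $f(x_{<i}^+, x_i^k, x_{>i}^k) - f(x_{<i}^+, x_i^+, x_{>i}^k)$, the linear multiplier term contributes $\langle w^k, A_i(x_i^k - x_i^+)\rangle$, and the quadratic penalty is expanded via the identity $\|u+a\|^2-\|u+b\|^2 = \|a-b\|^2 + 2\langle a-b, u+b\rangle$ applied with $u := A_{<i}x_{<i}^+ + A_{>i}x_{>i}^k + By^k$, $a := A_ix_i^k$, $b := A_i x_i^+$. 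This produces $\tfrac{\beta}{2}\|A_i(x_i^k-x_i^+)\|^2$ plus linear cross terms. The crucial bookkeeping step is to show that all linear-in-$(x_i^k-x_i^+)$ terms collapse exactly to $-\langle \bar{d}_i^k, x_i^k-x_i^+\rangle$: substituting $w^+ = w^k + \beta(\vA\vx^+ + By^+)$ into the definition of $\bar{d}_i^k$ in \eqref{fsubg} and using the definition of $\rho_i^k$, one checks that $A_i^Tw^+ + \beta\rho_i^k = A_i^T w^k + \beta A_i^T(A_{<i}x_{<i}^+ + A_i x_i^+ + A_{>i}x_{>i}^k + By^k)$, which is precisely the coefficient that appears after combining the $w^k$-term and quadratic expansion. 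Nonnegativity $r_i\ge 0$ is then automatic, since $r_i = F_i(x_i^k) - F_i(x_i^+)$ and Part (1) applies.

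For Part (4), valid only for $i\ge 1$ where \eqref{condpi1} expresses $\bar{d}_i^k = d_i^k + \nabla_i g(x_{<i}^+, x_i^+, x_{>i}^k)$ with $d_i^k\in\partial f_i(x_i^+)$, I substitute this decomposition into the identity for $r_i$ and split the $f$-difference as $[g(x_{<i}^+, x_i^k, x_{>i}^k) - g(x_{<i}^+, x_i^+, x_{>i}^k)] + [f_i(x_i^k) - f_i(x_i^+)]$ using the separable structure $f = g + \sum_j f_j$ from Assumption \ref{A_f}. The $g$-bracket combined with $-\langle \nabla_i g(x_{<i}^+, x_i^+, x_{>i}^k), x_i^k-x_i^+\rangle$ is bounded below by $-\tfrac{L_g}{2}\|x_i^k - x_i^+\|^2$ via the descent lemma for Lipschitz differentiable functions. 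The $f_i$-bracket combined with $-\langle d_i^k, x_i^k-x_i^+\rangle$ is bounded below by $-\tfrac{\gamma_i}{2}\|x_i^k - x_i^+\|^2$ directly from hypothesis \eqref{quadcvx1}. Finally, Lemma \ref{lemma:reverse_control} converts $\|x_i^k - x_i^+\|^2 \le \bar{M}^2 \|A_i(x_i^k - x_i^+)\|^2$, so both negative terms are absorbed into $\tfrac{\beta}{2}\|A_i(x_i^k-x_i^+)\|^2$ to yield the stated bound \eqref{rilb}.

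The principal obstacle is the identification in Part (3) of the linear correction with $-\langle \bar{d}_i^k, x_i^k - x_i^+\rangle$: the natural multiplier produced by the $x_i$-subproblem optimality involves $w^k$ and the intermediate primal vector $(x_{<i}^+, x_i^+, x_{>i}^k)$, whereas $\bar{d}_i^k$ as defined uses $w^+$ and the fully-updated iterate $\vx^+$; reconciling the two requires the algebraic identity above between $A_i^T w^+ + \beta\rho_i^k$ and $A_i^T w^k + \beta A_i^T(A_{<i}x_{<i}^+ + A_i x_i^+ + A_{>i}x_{>i}^k + By^k)$. A secondary point worth flagging is that Part (4) deliberately excludes $i = 0$, which is consistent with Assumption \ref{A_f}: $f_0$ is only required to be lower semi-continuous (or have bounded subgradients on bounded sets), so no inequality of the form \eqref{quadcvx1} is available at $i=0$, and the $x_0$-block must be handled separately in subsequent lemmas.
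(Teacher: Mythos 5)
Your proposal is correct and follows essentially the same route as the paper's proof: telescoping the subproblem optimality for Parts (1)--(2), expanding the quadratic penalty with the cosine rule and verifying that $A_i^Tw^+ + \beta\rho_i^k$ equals $A_i^Tw^k + \beta A_i^T(A_{<i}x_{<i}^+ + A_ix_i^+ + A_{>i}x_{>i}^k + By^k)$ for Part (3), and splitting $f$ into the $g$-part (descent lemma) and the $f_i$-part (hypothesis \eqref{quadcvx1}) before applying Lemma \ref{lemma:reverse_control} for Part (4). No gaps.
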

\begin{proof}
\textbf{Part 1} follows directly from the minimization subproblems, which give $x^+_i$. \textbf{Part 2} is a result of
\begin{align*}
\Lag_\beta(\vx^k,y^k,w^k)-\Lag_\beta(\vx^+,y^k,w^k)& =\sum_{i=0}^p\big(\Lag_\beta(x^+_{< i},x^k_i,x^k_{>i},y^k,w^k)-\Lag_\beta(x^+_{< i},x^+_i,x^k_{>i},y^k,w^k) \big),
\end{align*}
and part 1.
\textbf{Part 3}: Each term in the sum  equals  $f(x^+_{< i},x^k_i,x^k_{>i})-
f(x^+_{< i},x^+_i,x^k_{>i})$ \emph{plus}
\begin{align*}
&\dotp{w^k,A_ix^k_i-A_ix^+_i}+\frac{\beta}{2}\|A_{<i}x^+_{< i}+A_ix^k_i+A_{>i}x^k_{>i}+By^k\|^2
-\frac{\beta}{2}\|A_{<i}x^+_{< i}+A_ix^+_i+A_{>i}x^k_{>i}+By^k\|^2&\\
&=\dotp{w^k,A_ix^k_i-A_ix^+_i}+\dotp{\beta\left(A_{<i}x^+_{<i}+A_ix^+_i+A_{> i}x^k_{>i} + By^k\right),A_ix^k_i-A_ix^{+}_i}+\frac{\beta}{2}\|A_ix^k_i-A_i x^+_i\|^2
\\
&=\dotp{A_i^Tw^+ +\beta \rho_i^k,x^k_i-x^{+}_i}+\frac{\beta}{2}\|A_ix^k_i-A_i x^+_i\|^2
\end{align*}
where the first equality follows from the cosine rule: $\|b+c\|^2-\|a+c\|^2=\|b-a\|^2+2\dotp{a+c,b-a}$ with $b=A_ix^k_i$, $a=A_ix^+_i$, and $c=A_{<i}x^+_{< i}+A_{>i}x^k_{>i}+By^k$.
\smallskip

\noindent\textbf{Part 4.} Let $d_i^{k}$ be defined in \eqref{condpi1}. From the inequalities \eqref{MAx} and \eqref{quadcvx1}, we get
\begin{align}\label{Eq:find_gamma_i}
f_i(x^k_i)- f_i(x^+_i)-\dotp{d_i^k,x^k_i-x^+_i} \ge-\frac{\gamma_i}{2}\|x^k_i-x^+_i\|^2 \ge -\frac{\gamma_i \bar{M}^2}{2}\|Ax^k_i-Ax^+_i\|^2.
\end{align}
By the assumption \ref{A_f} part (i) and \eqref{MAx}, we also get
\begin{align}\label{Eq:find_Li}
g(x^+_{<i},x^k_i,x^k_{>i})-g(x^+_{< i},{x^+_i},x^k_{>i})-\dotp{\nabla_{i}g(x^+_{< i},{x^+_i},x^k_{>i}), x^k_i-x^{+}_i}\ge-\frac{L_{g}}{2}\|x^k_i-x^+_i\|^2 \ge -\frac{L_{g} \bar{M}^2}{2}\|Ax^k_i-Ax^+_i\|^2.
\end{align}
Finally,  rewriting the expression of $r_i$ and applying \eqref{Eq:find_gamma_i} and \eqref{Eq:find_Li} we obtain
\begin{align*}
r_i&= \big(g(x^+_{<i},x^k_i,x^k_{>i})-g(x^+_{< i},{x^+_i},x^k_{>i})-\dotp{\nabla_{i}g(x^+_{< i},{x^+_i},x^k_{>i}), x^k_i-x^{+}_i}\big)\\
&\quad+\big(f_i(x^k_i)- f_i(x^+_i)-\dotp{d_i^k,x^k_i-x^+_i}\big) +\frac{\beta}{2}\|Ax^k_i-Ax^+_i\|^2\\
&\ge \frac{\beta-\gamma_i  \bar{M}^2-L_g \bar{M}^2}{2}\|A_ix_i^k-A_ix_i^+\|^2.
\end{align*}
\qed
\end{proof}
The assumption \eqref{quadcvx1} in the part 4 of Lemma \ref{lm:x} is the same as \eqref{quadcvx} in Definition \ref{def:quadcvx} except the latter holds for more functions due to the exclusion set $S_M$.
In order to relax \eqref{quadcvx1} to \eqref{quadcvx}, we must find $M$ and specify the exclusion set $S_M$. (This complicates our analysis but is necessary for many nonconvex functions such as the $\ell_q$ quasi-norm.) We will finally achieve this relaxation in Lemma \ref{lm:p2}.

\begin{lemma}[descent of $\Lag_\beta$ due to $y$ and $w$ updates]\label{lm:yw} If $\beta>2(L_h\bar{M}^2 + 1 + C)$, where $C$ is the constant specified in Lemma \ref{lemma:y_control_w} and $L_h$ is the Lipschitz constant in Assumption \ref{A_h}, then for any $k\in\mathbb{N}$
\begin{align}\label{Eq:by_diff}
\Lag_\beta(\vx^+,y^k,w^k)-\Lag_\beta(\vx^+,y^+,w^+) &\geq \|B y^+-B y^k\|^2.
\end{align}
\end{lemma}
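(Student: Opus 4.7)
The plan is to decompose the left-hand side of \eqref{Eq:by_diff} into the $y$-step and the $w$-step, handle each separately, and combine.

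\textbf{Step 1 (descent from the $y$-update).} I would expand
\begin{align*}
\Lag_\beta(\vx^+,y^k,w^k)-\Lag_\beta(\vx^+,y^+,w^k) &= h(y^k)-h(y^+)+\langle w^k, B(y^k-y^+)\rangle\\
&\quad +\tfrac{\beta}{2}\|\vA\vx^++By^k\|^2-\tfrac{\beta}{2}\|\vA\vx^++By^+\|^2,
\end{align*}
and apply the cosine rule $\|b+c\|^2-\|a+c\|^2=\|b-a\|^2+2\langle a+c,b-a\rangle$ to the quadratic terms with $a=By^+$, $b=By^k$, $c=\vA\vx^+$. The cross terms collapse because the optimality condition for $y^+$ gives $B^Tw^k+\beta B^T(\vA\vx^++By^+)=-\nabla h(y^+)$, which is exactly Lemma~\ref{lemma:w_derivative}(a) applied at $k+1$. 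What remains is a ``descent-lemma" remainder $h(y^k)-h(y^+)-\langle\nabla h(y^+),y^k-y^+\rangle$ plus $\tfrac{\beta}{2}\|By^k-By^+\|^2$. Lipschitz differentiability of $h$ (Assumption~\ref{A_h}) bounds the remainder below by $-\tfrac{L_h}{2}\|y^k-y^+\|^2$, and Lemma~\ref{lemma:reverse_control} upgrades this to $-\tfrac{L_h\bar{M}^2}{2}\|By^k-By^+\|^2$. So
\begin{equation*}
\Lag_\beta(\vx^+,y^k,w^k)-\Lag_\beta(\vx^+,y^+,w^k)\ge \tfrac{\beta-L_h\bar{M}^2}{2}\|By^k-By^+\|^2.
\end{equation*}

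\textbf{Step 2 (cost of the $w$-update).} The $w$-step changes only the linear and multiplier terms, so
\begin{equation*}
\Lag_\beta(\vx^+,y^+,w^k)-\Lag_\beta(\vx^+,y^+,w^+)=\langle w^k-w^+,\vA\vx^++By^+\rangle=-\tfrac{1}{\beta}\|w^+-w^k\|^2,
\end{equation*}
using $w^+-w^k=\beta(\vA\vx^++By^+)$. This term is negative, but Lemma~\ref{lemma:w_derivative}(b) controls its magnitude: it is bounded below by $-\tfrac{C^2}{\beta}\|By^+-By^k\|^2$.

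\textbf{Step 3 (combine and choose $\beta$).} Adding Steps 1 and 2 yields
\begin{equation*}
\Lag_\beta(\vx^+,y^k,w^k)-\Lag_\beta(\vx^+,y^+,w^+)\ge \Bigl(\tfrac{\beta-L_h\bar{M}^2}{2}-\tfrac{C^2}{\beta}\Bigr)\|By^+-By^k\|^2.
\end{equation*}
The lemma's conclusion requires the coefficient to be at least $1$, i.e.\ $\beta(\beta-L_h\bar{M}^2-2)\ge 2C^2$. Under $\beta>2(L_h\bar{M}^2+1+C)$ one has $\beta-L_h\bar{M}^2-2>2C$ and $\beta>2C$, so the product exceeds $4C^2\ge 2C^2$, closing the estimate.

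There is no real obstacle here; the only delicate point is that the quadratic remainder from the $h$-descent and the slack from the $w$-step both must be dominated by the $\tfrac{\beta}{2}\|By^k-By^+\|^2$ gained in Step~1, and this is precisely why Assumption~\ref{A_rank} (Lipschitz sub-minimization path, giving control of $\|y^k-y^+\|$ by $\|By^k-By^+\|$) and Lemma~\ref{lemma:w_derivative}(b) are needed in tandem; without either, the right-hand side would not be expressible in the required $\|B(y^+-y^k)\|^2$ form.
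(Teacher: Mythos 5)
Your proof is correct and follows essentially the same route as the paper: the single identity the paper writes for $\Lag_\beta(\vx^+,y^k,w^k)-\Lag_\beta(\vx^+,y^+,w^+)$ is exactly the sum of your Step~1 and Step~2, and both arguments rely on the same three ingredients (the optimality condition $B^Tw^+=-\nabla h(y^+)$ from Lemma~\ref{lemma:w_derivative}(a), the bound $\|w^+-w^k\|\le C\|By^+-By^k\|$ from Lemma~\ref{lemma:w_derivative}(b), and Lemma~\ref{lemma:reverse_control} to convert $\|y^k-y^+\|$ into $\|By^k-By^+\|$). The only differences are organizational --- you split the telescoping into the $y$- and $w$-steps explicitly and verify the constant condition via $\beta(\beta-L_h\bar{M}^2-2)\ge 2C^2$ rather than the paper's direct chain in \eqref{Eq:obviousBound} --- and both verifications are valid.
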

\begin{proof}
Because $\beta/2 > L_h\bar{M}^2 + 1 + C$ and $\beta^{-1} < 1/C$, we know
\begin{equation}\label{Eq:obviousBound}
\frac{\beta}{2} - \frac{C^2}{\beta} - \frac{L_h\bar{M}^2}{2} > L_h\bar{M}^2 + 1 + C - C - \frac{L_h\bar{M}^2}{2} > 1.
\end{equation}

From the assumption \ref{A_h} and  Lemma \ref{lemma:w_derivative}(b), it follows
\begin{align}\nonumber
& \quad \Lag_{\beta}(\vx^+,y^k,w^k)-\Lag_{\beta}(\vx^+,y^+,w^+) \\
& = h(y^k)-h(y^+)+\dotp{w^+,By^k-By^+}+\frac{\beta}{2}\|B y^+-B y^k\|^2-\frac{1}{\beta}\|w^+-w^k\|^2\\ \label{Eq:beta_lowerbound_2}
& \geq -\frac{L_h\bar{M}^2}{2}\|B y^+-B y^k\|^2+\frac{\beta}{2}\|B y^+-B y^k\|^2-\frac{C^2}{\beta }\|B y^+-B y^k\|^2\\
& \geq \|B y^+-B y^k\|^2, \nonumber
\end{align}
The first inequality holds because
\begin{align*}
&h(y^k) - h(y^+)  + \big<w^+, By^k - By^+\big> \\
= &h(y^k) - h(y^+)  + \big<B^Tw^+, y^k - y^+\big> \\
= &h(y^k) - h(y^+)  - \big<\nabla h(y^+), y^k - y^+\big> \\
= & -\frac{L_h}{2}\|y^k - y^+\|^2 \quad \text{(Lipschitz differentiable of $-h$)}\\
= & -\frac{L_h\bar M}{2}\|By^k - By^+\|^2.
\end{align*}
The last inequality holds because of \eqref{Eq:obviousBound}. \qed
\end{proof}
Based on Lemma \ref{lm:x} and Lemma \ref{lm:yw}, we now establish the following results:
\begin{lemma}[Monotone, lower--bounded $\Lag_\beta$ and (\ref{P1}) bounded sequence]\label{lm:mono} If $\beta > 2(L_h\bar{M}^2 + 1 + C)$ as in Lemma \ref{lm:yw}, then the sequence $(\vx^k,y^k,w^k)$ generated by Algorithm \ref{Alg:BCD} satisfies
\begin{enumerate}
\item $\Lag_\beta(\vx^k,y^k,w^k)\ge \Lag_\beta(\vx^+,y^+,w^+)$.
\item $\Lag_\beta(\vx^k,y^k,w^k)$ is lower bounded for all $k\in \mathbb{N}$ and converges as $k\to\infty$.
\item $\{\vx^k,y^k,w^k\}$ is bounded.
\end{enumerate}
\end{lemma}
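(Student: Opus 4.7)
The plan has three pieces corresponding to the three conclusions, each feeding the next.

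Part 1 (monotone descent) is a direct telescoping of Lemmas \ref{lm:x} and \ref{lm:yw}. Lemma \ref{lm:x} Part 2 gives $\Lag_\beta(\vx^k,y^k,w^k)\ge \Lag_\beta(\vx^+,y^k,w^k)$ unconditionally (it is just the definition of $x_i^+$ as the exact minimizer), and Lemma \ref{lm:yw}, under the stated lower bound on $\beta$, contributes $\Lag_\beta(\vx^+,y^k,w^k)\ge \Lag_\beta(\vx^+,y^+,w^+) + \|By^+ - By^k\|^2$. Concatenating the two yields (1).

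Part 2 (lower bound) is the central step. The difficulty is that $\dotp{w^k,\vA\vx^k+By^k}$ is not sign-definite and $w^k$ is not yet known to be bounded, so one cannot directly bound $\Lag_\beta$ from below. The trick is to absorb that cross term into $h(y^k)$ via $B^Tw^k = -\nabla h(y^k)$ from Lemma \ref{lemma:w_derivative}(a). Assumption \ref{A_feasible} ($\image{\vA}\subseteq\image{B}$) lets me pick $\hat y$ with $B\hat y = \vA\vx^k + By^k$; choosing $\hat y$ in the row space of $B$ yields $\|\hat y\|^2 \le \lambda_{++}^{-1}(B^TB)\|\vA\vx^k+By^k\|^2$. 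Then
\begin{align*}
\dotp{w^k,\vA\vx^k+By^k} = \dotp{B^Tw^k,\hat y} = -\dotp{\nabla h(y^k),\hat y},
\end{align*}
and the quadratic upper bound coming from Lipschitz differentiability of $h$ gives $h(y^k) - \dotp{\nabla h(y^k),\hat y} \ge h(y^k - \hat y) - \tfrac{L_h}{2}\|\hat y\|^2$. Combining,
\begin{align*}
\Lag_\beta(\vx^k,y^k,w^k) \ge \phi(\vx^k,\,y^k - \hat y) + \Big(\tfrac{\beta}{2} - \tfrac{L_h}{2\lambda_{++}(B^TB)}\Big)\|\vA\vx^k+By^k\|^2.
\end{align*}
Since $\vA\vx^k + B(y^k - \hat y) = 0$, the point $(\vx^k, y^k - \hat y)$ lies in $\cF$, and Assumption \ref{A_coercive} gives $\phi^\ast:=\inf_{\cF}\phi > -\infty$. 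For $\beta$ large enough (the stated range in Lemma \ref{lm:yw} already covers this, since $2(L_h\bar M^2+1+C)$ exceeds $L_h/\lambda_{++}(B^TB)$ up to a harmless constant) the coefficient is nonnegative, so $\Lag_\beta(\vx^k,y^k,w^k) \ge \phi^\ast$. Combined with Part 1, $\{\Lag_\beta(\vx^k,y^k,w^k)\}$ is monotonically nonincreasing and bounded below, hence convergent.

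Part 3 (boundedness) follows by squeezing. From Part 1, $\Lag_\beta(\vx^k,y^k,w^k) \le \Lag_\beta(\vx^0,y^0,w^0)=: \bar L$; from the inequality produced in Part 2, both $\phi(\vx^k, y^k - \hat y_k)$ and $\|\vA\vx^k+By^k\|^2$ are then bounded above (each is bounded below independently: the first by $\phi^\ast$, the second trivially). The coercivity of $\phi$ over $\cF$ forces $\{(\vx^k, y^k-\hat y_k)\}$ to be bounded, and boundedness of $\|\vA\vx^k+By^k\|$ gives boundedness of $\|\hat y_k\|$, whence $\{\vx^k\}$ and $\{y^k\}$ are bounded. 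Finally $B^T w^k = -\nabla h(y^k)$ is bounded by continuity of $\nabla h$ on a bounded set; since $w^{k+1}-w^k = \beta(\vA\vx^{k+1}+By^{k+1}) \in \image{B}$ by \ref{A_feasible}, and we may take $w^0 \in \image{B}$ without loss of generality, every $w^k$ lies in $\image{B}$ and hence $\|w^k\| \le \lambda_{++}^{-1/2}(B^TB)\|B^Tw^k\|$ is bounded.

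The main obstacle is the lower-bound argument in Part 2: one must cancel the indefinite cross term $\dotp{w^k,\vA\vx^k+By^k}$ while only paying a quadratic penalty that $\beta$ can dominate. Assumption \ref{A_feasible} is essential to make a suitable $\hat y$ exist, and Assumption \ref{A_coercive} supplies the floor $\phi^\ast$ on $\cF$; without either, this descent-plus-coercivity scheme breaks down.
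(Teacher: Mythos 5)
Your proposal is correct and follows essentially the same route as the paper: Part 1 by concatenating Lemma \ref{lm:x} part 2 with Lemma \ref{lm:yw}, Part 2 by using $B^Tw^k=-\nabla h(y^k)$ to turn the indefinite cross term into a gradient term, descending to a feasible comparison point and invoking coercivity over $\cF$, and Part 3 by extracting boundedness of $\vx^k,y^k$ from the resulting upper bounds and of $w^k$ from $B^Tw^k=-\nabla h(y^k)$ together with $w^k-w^0\in\image{B}$. The only cosmetic difference is your choice of reference point $y^k-\hat y$ with $\hat y=B^+(\vA\vx^k+By^k)$ (paying $\lambda_{++}^{-1}(B^TB)$) in place of the paper's $y'=H(-\vA\vx^k)$ (paying $\bar M^2$ via Lemma \ref{lemma:reverse_control}); since $\bar M\ge\lambda_{++}^{-1/2}(B^TB)$, your parenthetical claim that the stated threshold on $\beta$ covers the needed coefficient is indeed justified.
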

\begin{proof}
Part 1. It is a direct result of Lemma \ref{lm:x} part 2, and Lemma \ref{lm:yw}.

Part 2. By the assumption \ref{A_feasible}, there exists $y'$ such that $\vA\vx^k+By'=0$ and $y' = H(By')$. By the assumptions \ref{A_coercive}--\ref{A_feasible}, we have
$$f(\vx^k) + h(y')\geq  \min_{\vx,y} \{f(\vx)+h(y):\vA\vx+By=0\}>-\infty.$$Then we have
\begin{align*}
\Lag_\beta(\vx^k,y^k,w^k) &= f(\vx^k) + h(y^k) + \dotp{B^Tw^k,y^k-y'}+ \frac{\beta}{2}\|\vA\vx^k+By ^k\|^2\\
&= f(\vx^k) + h(y^k) +\dotp{\nabla h(y^k),y'-y^k}+ \frac{\beta}{2}\|\vA\vx^k+By^k \|^2\\
(\mbox{Lemma~}\ref{lemma:reverse_control}, \nabla h~\mbox{is Lipschitz}) \quad&\ge f(\vx^k) + h(y') + \frac{\beta-L_h\bar{M}^2}{2}\|\vA\vx^k+By ^k\|^2\\
&>-\infty.
\end{align*}

Part 3. From parts 1 and 2, $\Lag_\beta(\vx^k,y^k,w^k)$ is upper bounded by $\Lag_\beta(\vx^0,y^0,w^0)$ and so are $f(\vx^k) + h(y') $ and $\|\vA\vx^k+By^k \|^2$. By the assumption \ref{A_coercive}, $\{\vx^k\}$ is bounded and, therefore, $\{By^k\}$ is also bounded. By Lemma \ref{lemma:reverse_control}, we know that $\{y^k\}$  is bounded. By Lemma \ref{lemma:w_derivative}, $\{B^Tw^k\}$ is also bounded. Similar to the proof in Lemma \ref{lemma:w_derivative}(b), $w^k - w^0\in \mathrm{Im}(B)$. Therefore, the boundedness of $B^Tw^k$ implies the boundedness of $w^k$.\qed
\end{proof}
It is important to note that, once $\beta$ is larger than the threshold, the constants and bounds in Lemmas \ref{lm:yw} and \ref{lm:mono} only rely on the objective $f(x) + h(y)$, matrices $\vA$, $B$, and the initial point $(\vx^0, y^0, w^0)$ but will be \emph{independent of} $\beta$, which is essential to the proof of Lemma \ref{lm:p2} below.
\begin{lemma}[Asymptotic regularity]\label{lm:By} $\lim_{k\to\infty}\|By^k-By^{+}\|=0$ and $\lim_{k\to\infty}\|w^k-w^{+}\|=0$.
\end{lemma}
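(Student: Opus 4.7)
The plan is to exploit the monotonicity and lower-boundedness of the augmented Lagrangian established in Lemma~\ref{lm:mono} to conclude that the total decrease across iterations is finite, and then use the descent inequalities from Lemmas~\ref{lm:x} and~\ref{lm:yw} to extract control on $\|By^+ - By^k\|$.

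More concretely, the first step is to combine part~2 of Lemma~\ref{lm:x} with Lemma~\ref{lm:yw} to obtain, for all sufficiently large $\beta$,
\begin{align*}
\Lag_\beta(\vx^k,y^k,w^k) - \Lag_\beta(\vx^{k+1},y^{k+1},w^{k+1})
&= \bigl(\Lag_\beta(\vx^k,y^k,w^k) - \Lag_\beta(\vx^+,y^k,w^k)\bigr) \\
&\quad + \bigl(\Lag_\beta(\vx^+,y^k,w^k) - \Lag_\beta(\vx^+,y^+,w^+)\bigr) \\
&\geq \|By^{k+1} - By^k\|^2.
\end{align*}
The second step is to telescope: since $\{\Lag_\beta(\vx^k,y^k,w^k)\}$ is nonincreasing (Lemma~\ref{lm:mono}, part~1) and lower bounded (Lemma~\ref{lm:mono}, part~2), it converges, so
\begin{equation*}
\sum_{k=0}^{\infty}\|By^{k+1} - By^k\|^2 \;\leq\; \Lag_\beta(\vx^0,y^0,w^0) - \lim_{k\to\infty}\Lag_\beta(\vx^k,y^k,w^k) \;<\; \infty.
\end{equation*}
In particular $\|By^{k+1} - By^k\|\to 0$, proving the first claim.

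The third step is immediate: by Lemma~\ref{lemma:w_derivative}(b), $\|w^{k+1}-w^k\|\leq C\|By^{k+1}-By^k\|$, so the right-hand side vanishes and $\|w^{k+1}-w^k\|\to 0$ as well. There is no real obstacle here; the only subtlety is making sure that the descent guarantee in Lemma~\ref{lm:yw} (which requires $\beta>2(L_h\bar{M}^2+1+C)$) is in force, which is covered by the running hypothesis on $\beta$ in this subsection. Note that this proof does not yet need the more delicate sufficient-descent inequality \ref{P2} with an $\|A_i(x_i^k-x_i^{k+1})\|^2$ term on the right-hand side; that stronger statement will be handled separately (via Lemma~\ref{lm:p2}), and the present weaker asymptotic regularity suffices to prepare the ground for it.
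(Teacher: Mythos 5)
Your proposal is correct and follows essentially the same route as the paper: combine the per-iteration descent from Lemma~\ref{lm:x} (part 2) and Lemma~\ref{lm:yw} with the monotonicity and lower-boundedness from Lemma~\ref{lm:mono} to telescope and conclude $\|By^{k+1}-By^k\|\to 0$, then control $\|w^{k+1}-w^k\|$ via Lemma~\ref{lemma:w_derivative}. The only cosmetic difference is that you cite part~(b) of Lemma~\ref{lemma:w_derivative} directly, whereas the paper cites part~(a) together with the Lipschitz continuity of $\nabla h$, which is exactly how part~(b) is derived.
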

\begin{proof} The first result follows directly from Lemmas \ref{lm:x}, \ref{lm:yw}, and \ref{lm:mono} (part 2), and the second result from Lemma \ref{lemma:w_derivative} part (b).
\end{proof}
The lemma below corresponds to the assumption A4, part(ii)-b.
\begin{lemma}[Boundedness for piecewise linear $f_i$'s]\label{lm:pl} Consider the case that $f_i$,  $i=1,\ldots,p$,  are piece-wise linear. There exist constants $M^* > 0$ (independent of $\beta$), $\bar{M}$ and $L_g$ defined in \ref{A_rank} and \ref{A_f}, respectively, for any $\epsilon_0>0$, when $\beta >  \max\{2(M^*+1)/{\epsilon_0^2},L_h\bar{M}^2 + 1 + C\}$, there exists $k_{\mathrm{pl}}\in\mathbb{N}$ such that the followings hold for all $k>k_{\mathrm{pl}}$:
\begin{enumerate}
\item $\|A_ix^+_i - A_ix^k_i\| < \epsilon_0$ and $\|x^+_i - x^k_i\| < \bar{M}\epsilon_0$, $i=0,\ldots,p$;\\[-8pt]
\item $\|\nabla g(\vx^k) - \nabla g(\vx^+)\| < (p+1)\bar{M}L_g\epsilon_0$.
\end{enumerate}
\end{lemma}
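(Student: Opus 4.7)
The plan is to establish part 1 first for the blocks $i=1,\ldots,p$ by exploiting piecewise linearity, then bootstrap the $i=0$ block through the dual update, and finally deduce part 2 from the Lipschitz continuity of $\nabla g$. Combining Lemmas \ref{lm:x}, \ref{lm:yw}, \ref{lm:mono} and telescoping the descent inequality
\[
\Lag_\beta(\vx^k,y^k,w^k)-\Lag_\beta(\vx^+,y^+,w^+) \;\geq\; \sum_{i=0}^p r_i^k + \|By^+-By^k\|^2
\]
over $k$ gives $\sum_k r_i^k<\infty$, hence $r_i^k\to 0$ for every $i$; Lemma \ref{lm:By} additionally gives $\|By^+-By^k\|\to 0$ and $\|w^+-w^k\|\to 0$.

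Fix $i\in\{1,\ldots,p\}$ and write $s_i:=\|A_i(x_i^+-x_i^k)\|$. I decompose $r_i$ exactly as in the proof of Lemma \ref{lm:x} part 4: the $g$-term is bounded below by $-\tfrac{L_g\bar{M}^2}{2}s_i^2$ via the Lipschitz descent inequality and \eqref{MAx}. For the $f_i$-term, piecewise linearity of $f_i$ together with the boundedness of $\{x_i^k\}$ from Lemma \ref{lm:mono}(3) implies that $f_i$ is Lipschitz and $\partial f_i$ is bounded on the relevant bounded set, so
\[
\bigl|f_i(x_i^k)-f_i(x_i^+)-\dotp{d_i^k,x_i^k-x_i^+}\bigr| \;\leq\; C_{f_i}\bar{M}\,s_i,
\]
with $C_{f_i}$ independent of $\beta$ (by the remark after Lemma \ref{lm:mono}). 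Combining, $r_i \geq \tfrac{\beta-L_g\bar{M}^2}{2}s_i^2 - C_{f_i}\bar{M}\,s_i$. Since $\{s_i^k\}$ is uniformly bounded, I absorb $C_{f_i}\bar{M}\,s_i$ and $\tfrac{L_g\bar{M}^2}{2}s_i^2$ into a single constant $M^*>0$ independent of $\beta$ and $\epsilon_0$, yielding $r_i \geq \tfrac{\beta}{2}s_i^2 - M^*$. If $s_i\geq\epsilon_0$, then $r_i\geq \tfrac{\beta}{2}\epsilon_0^2-M^*>1$ whenever $\beta>2(M^*+1)/\epsilon_0^2$, contradicting $r_i^k\to 0$ for all large $k$; hence there is $k_{\mathrm{pl}}^{(i)}$ such that $s_i<\epsilon_0$ for $k>k_{\mathrm{pl}}^{(i)}$, and \eqref{MAx} promotes this to $\|x_i^+-x_i^k\|<\bar{M}\epsilon_0$. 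To cover $i=0$, I subtract two consecutive $w$-updates to obtain
\[
A_0(x_0^+-x_0^k)=\tfrac{1}{\beta}\bigl[(w^+-w^k)-(w^k-w^{k-1})\bigr]-\sum_{i=1}^p A_i(x_i^+-x_i^k)-B(y^+-y^k);
\]
every term on the right tends to zero by Lemma \ref{lm:By} and by the bound just proved for $i\geq 1$, so $\|A_0(x_0^+-x_0^k)\|<\epsilon_0$ for $k$ large enough, with $\|x_0^+-x_0^k\|<\bar{M}\epsilon_0$ from \eqref{MAx}. Setting $k_{\mathrm{pl}}$ to be the maximum of the individual thresholds completes part 1, and part 2 follows instantly from $\|\nabla g(\vx^k)-\nabla g(\vx^+)\|\leq L_g\|\vx^k-\vx^+\|\leq L_g\sum_{i=0}^p\|x_i^+-x_i^k\|<(p+1)\bar{M}L_g\,\epsilon_0$.

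The main obstacle is the $f_i$ contribution in the piecewise linear regime: unlike the restricted prox-regular branch in Lemma \ref{lm:x} part 4, one obtains only a \emph{linear}---not quadratic---lower bound on $f_i(x_i^k)-f_i(x_i^+)-\dotp{d_i^k,x_i^k-x_i^+}$, which forces the quadratic-versus-linear trade-off above and explains why $\beta$ must scale like $1/\epsilon_0^2$ in the hypothesis. A secondary subtlety is that the $i=0$ block, where $f_0$ is only lower semi-continuous and may have no usable descent estimate, cannot be handled directly; it has to be reached through the primal feasibility residual once the other $A_i$-differences and $B$-differences are known to be small.
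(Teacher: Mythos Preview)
Your proof is correct and follows essentially the same route as the paper's: bound the non-quadratic part of $r_i$ by a constant $M^*$ independent of $\beta$ (using piecewise linearity and the $\beta$-independent boundedness of the iterates), force $\|A_i(x_i^+-x_i^k)\|<\epsilon_0$ eventually via the lower-boundedness/monotonicity of $\Lag_\beta$, recover the $i=0$ block from the $w$-update identity, and get part~2 from the Lipschitz gradient of $g$. The only cosmetic differences are that the paper bounds the first three terms of $r_i$ in \eqref{formri} directly by $M^*$ (rather than first obtaining your linear bound $C_{f_i}\bar M\,s_i$ and then coarsening), and that the paper phrases the contradiction as ``$\Lag_\beta$ cannot drop by $1$ infinitely often'' rather than invoking $r_i^k\to 0$; both are equivalent. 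One small quibble shared with the paper: your $i=0$ estimate actually yields an $O(p\,\epsilon_0)$ bound, not $\epsilon_0$, but since $\epsilon_0$ is arbitrary this is immaterial downstream.
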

\begin{proof}
Part 1. Since the number $K$ of the linear pieces of $f_i$ is finite for $i=1,\ldots, p$, $\partial f_0$ is bounded for $x$ in any bounded set $S$, and $\{\vx^k,y^k,w^k\}$ is bounded (see Lemma \ref{lm:mono}),
 $\partial_i  f(x^+_{< i},x^+_i,x^k_{>i})$ are uniformly bounded for all $k$ and $i$. Since $\bar{d}_i^k\in \partial_i  f(x^+_{< i},x^+_i,x^k_{>i})$ (see \eqref{fsubg}), the first three terms of $r_i$ (see \eqref{formri}) are bounded by a universal constant $M^*$ independent of $\beta$:
$$f(x^+_{< i},x^k_i,x^k_{>i})-f(x^+_{< i},x^+_i,x^k_{>i})-\dotp{\bar{d}_i^k,x^k_i-x^{+}_i}\in [-M^*,M^*].$$
Hence, as long as $\beta>2(M^*+1)/{\epsilon_0^2}$,
\begin{align}\label{Eq:find_beta_0}
\|A_ix^+_i - A_ix^k_i\| \ge \epsilon_0 &\Rightarrow ~ r_i\ge \frac{\beta}{2} \epsilon_0^2-M^* > 1\\
 &\Rightarrow ~\Lag_\beta(x^+_{< i},{\boldsymbol x^k_i},x^k_{>i},y^k,w^k)- 1> \Lag_\beta(x^+_{< i},{\boldsymbol x^+_i},x^k_{>i},y^k,w^k).
\end{align}
By Lemmas \ref{lm:x}, \ref{lm:yw}, and \ref{lm:mono}, this means $\Lag_\beta(\vx^k,y^k,w^k)-1>\Lag_\beta(\vx^+,y^+,w^+)$. Since $\{\Lag_\beta(\vx^k,y^k,w^k)\}$ is lower bounded, $\|A_ix^+_i - A_ix^k_i\| \ge \epsilon_0$ can only hold for finitely many $k$.  Thus for $i = 1,\ldots, p$, we have
\[
\|A_ix^+_i - A_ix^k_i\| < \epsilon_0.
\]
As for $i = 0$, because of Lemma \ref{lm:By}, we know
\[
\limsup_k \|A_0x^+_0 - A_0x^k_0\| \leq \limsup_k \|\sum_{i=1}^p (A_ix^+_i - A_ix^k_i) + By^+ - By^k\| \leq p\epsilon_0.
\]
Thus for large $k > k_{pl}$, $\|A_0 x^+_0 - A_0 x^k_0\| \leq (p+1) \epsilon_0$
By Lemma \ref{lemma:reverse_control}, we know Part 1 is correct.

Part 2 follows from $\|\nabla g(\vx^k) - \nabla g(\vx^+)\|\le L_g\|\vx^k-\vx^+\|$, part 1 above, and Lemma \ref{lemma:reverse_control}.
\qed
\end{proof}

\begin{lemma}[Sufficient descent property  \ref{P2}]\label{lm:p2} Suppose
$$\beta>\max\Big\{2(M+1)/{\epsilon_0^2},L_h\bar{M}^2 + 1 + C,\sum_{i=1}^p\gamma_i\bar{M}^2 + L_g\bar{M}^2\Big\},$$
where $\gamma_i\ (i=1,\ldots,p)$ and $\epsilon_0$ are constants only depending on $f$, $M >  M^*$ is a constant independent of $\beta$. Then, Algorithm \ref{Alg:BCD} satisfies the sufficient descent property \ref{P2}.
\end{lemma}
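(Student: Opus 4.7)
The plan is to derive \eqref{Cond:1} by combining the primal-$\vx$ descent from Lemma \ref{lm:x}, the primal-$y$/dual-$w$ descent from Lemma \ref{lm:yw}, and a mechanism that forces the restricted prox-inequality \eqref{quadcvx1} (required by Lemma \ref{lm:x} Part 4) to hold along the tail of the sequence. Assembling parts 2--3 of Lemma \ref{lm:x} with Lemma \ref{lm:yw} gives
\[
\Lag_\beta(\vx^k,y^k,w^k) - \Lag_\beta(\vx^+,y^+,w^+) \ge \sum_{i=0}^p r_i + \|By^+ - By^k\|^2
\]
with each $r_i \ge 0$, so the remaining job is to bound $r_i$ from below by a positive multiple of $\|A_i(x_i^k - x_i^+)\|^2$ for $i = 1,\ldots,p$ once $k$ is large enough; by Lemma \ref{lm:x} Part 4 this reduces to certifying \eqref{quadcvx1} with some constants $\gamma_i$.

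Under the piecewise linear alternative \ref{A_f}(ii)-b, I would invoke Lemma \ref{lm:pl} directly: by choosing $\epsilon_0$ small and $\beta$ above the stated threshold, Part~1 of that lemma forces the successive iterates $x_i^k, x_i^+$ eventually to lie in the closure of a common linear piece of $f_i$, so $f_i$ is affine between them and \eqref{quadcvx1} holds with $\gamma_i = 0$ (taking $d_i^k$ to be the gradient of that piece). Under the restricted prox-regular alternative \ref{A_f}(ii)-a the substantive work begins: Definition \ref{def:quadcvx} produces $\gamma_i$ only after one exhibits a constant $M$, independent of $k$ and $\beta$, such that the subgradient $d_i^k \in \partial f_i(x_i^+)$ from \eqref{condpi1} satisfies $\|d_i^k\| \le M$ (the exclusion requirement $x_i^+ \notin S_M$ will then be automatic for $M$ large once the iterates are bounded). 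Writing
$d_i^k = -\nabla_i g(x^+_{<i},x^+_i,x^k_{>i}) - A_i^T w^+ - \beta\rho_i^k$
with $\rho_i^k = A_i^T \sum_{j>i} A_j(x_j^k - x_j^+) + A_i^T(By^k - By^+)$, the only potentially unbounded piece is $\beta\rho_i^k$.

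The core device is a \textbf{backward induction on $i$ from $p$ down to $1$} proving jointly that $\|A_i(x_i^k - x_i^+)\| \to 0$ and that \eqref{quadcvx1} eventually holds block-by-block. For the base case $i = p$ the sum in $\rho_p^k$ is empty, leaving only $A_p^T(By^k - By^+)$, which vanishes by Lemma \ref{lm:By}; hence $\|\beta\rho_p^k\|$ is eventually bounded by an absolute constant, so $\|d_p^k\| \le M$ for large $k$. Restricted prox-regularity applied on the bounded set furnished by Lemma \ref{lm:mono} then delivers $\gamma_p$, Lemma \ref{lm:x} Part 4 yields $r_p \ge \tfrac{\beta - \gamma_p \bar{M}^2 - L_g \bar{M}^2}{2}\|A_p(x_p^k - x_p^+)\|^2$, and summing over $k$ against the lower-boundedness of $\Lag_\beta$ forces $\|A_p(x_p^k - x_p^+)\| \to 0$. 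The induction step for arbitrary $i$ then inherits the vanishing of every $\|A_j(x_j^k - x_j^+)\|$ with $j > i$, so $\rho_i^k \to 0$, the bound $\|d_i^k\| \le M$ follows for large $k$, and the same prefactor argument propagates downward to $i = 1$. Choosing $\beta > \sum_{i=1}^p \gamma_i \bar{M}^2 + L_g \bar{M}^2$ on top of the earlier thresholds makes every prefactor positive, and \eqref{Cond:1} follows with $C_1(\beta)$ equal to the minimum of those prefactors and the constant $1$ from Lemma \ref{lm:yw}. The main obstacle---and the reason the induction is needed---is the apparent circularity of needing $\|A_j(x_j^k - x_j^+)\| \to 0$ to certify \eqref{quadcvx1}, which is itself what delivers that vanishing; the loop closes only because each of the constants $\bar{M}, L_g, L_h, C$ and the $\gamma_i$'s extracted from restricted prox-regularity is independent of $\beta$, a property highlighted after Lemma \ref{lm:mono}.
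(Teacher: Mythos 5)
Your proposal follows essentially the same route as the paper's proof: a backward induction from $i=p$ down to $1$, bounding $\beta\rho_i^k$ via Lemma \ref{lm:By} and the inductive hypothesis so that the exhibited subgradient satisfies $\|d_i^k\|\le M$ and hence $x_i^+$ lies outside the exclusion set $S_M$, which validates \eqref{quadcvx1}, yields \eqref{rilb}, and (with Lemmas \ref{lm:x} and \ref{lm:yw}) gives \ref{P2}. The one place you compress the argument is the piecewise-linear case: the paper needs both the point proximity \emph{and} the subgradient proximity from Lemma \ref{lm:pl}, plus a case analysis on adjacent polyhedra (equal versus unequal slopes $a_{j_1},a_{j_2}$), to conclude that $x_i^k$ and $x_i^+$ can be placed in a common linear piece --- point proximity alone does not suffice.
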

It is worth noting that the proof below will be much simpler if there are only two blocks, instead of $p+2$, or if we assume \emph{prox-regular} functions $f_i$ instead of the less restrictive \emph{restricted prox-regular} functions.
\begin{proof}
\cut{By substituting $w^+=w^k + \beta\left(\vA\vx^{k+1}+By^{k+1}\right)$  and introducing
$$\rho_{i}= A_i^T\sum_{j=i+1}^p(A_jx_j^k-A_jx_j^+)-A_p^T(By^k-By^+),$$
the optimality condition  $0\in\partial f_i(x^+_i)+\nabla_{i}g(x^+_{<i},x^+_i,x^k_{>i}) + A_i^T\big(w^k+\beta(A_{<i}x^+_{<i}+A_ix^+_i+A_{>i}x^k_{>i}+By^k)\big)$ can be rewritten as

\beq\label{condpi1}
-(\nabla_{i}g(x^+_{<i},x^+_i,x^k_{>i})+A_p^Tw^+)-\beta \rho_i \in \partial f_i(x^+_i). \eeq
 By Lemma \ref{lm:mono} part 3, there exists a universal $M>0$ (independent of $\beta,k,i$) such that
$$\big\|\nabla_{i}g(\vx^+)+A_i^Tw^+\big\|\le (M-1),\quad i=1,\ldots,p.$$}

We will show the lower bound \eqref{rilb} for  $i=1,\ldots,p$, which, along with Lemma \ref{lm:x} part 3 and Lemma \ref{lm:yw}, establishes the sufficient descent property \ref{P2}.

We shall obtain the lower bound \eqref{rilb}  in the backward order $i=p,(p-1),\ldots,1$.  In light of Lemmas  \ref{lm:x}, \ref{lm:yw}, and \ref{lm:mono}, each   lower bound \eqref{rilb}  for $r_i$ gives us $\|A_ix_i^k-A_ix_i^+\|\to 0$ as $k\rightarrow \infty$. We will first show \eqref{rilb}   for $r_p$. Then, after we do the same for $r_{p-1},\ldots,r_{i+1}$, we will get $\|A_jx_j^k-A_jx_j^+\|\to 0$ for  $j=p,p-1,\ldots,i+1$, using which we will get the lower bound \eqref{rilb} for the next $r_{i}$. We must take this backward order since $\rho_{i}^k$ (see \eqref{condpi1}) includes the terms $A_jx_j^k-A_jx_j^+$ for $j=p,p-1,\ldots,i+1$\cut{ and we must bound it in the proof}.

Our proof for each $i$ is divided into two cases.
In Case 1, $f_i$'s are restricted prox-regular (cf. Definition \ref{def:quadcvx}),  we will get \eqref{rilb} for $r_i$ by validating the condition \eqref{quadcvx1} in  Lemma \ref{lm:x} part 4 for $f_i$. In Case 2, $f_i$'s are piecewise linear (cf. Definition \ref{def:piecewise_linear}), we will show that  \eqref{quadcvx1} holds for $\gamma_i=0$ for $k\ge k_{\mathrm{pl}}$, and following the proof of Lemma \ref{lm:x} part 4, we directly get \eqref{rilb} with $\gamma_i=0$.
\medskip

\noindent \textbf{Base  step}, take $i=p$.

\noindent \textit{Case 1)} $f_p$ is restricted prox-regular. At $i=p$, the inclusion \eqref{condpi1} simplifies to
\beq\label{psubgrad}
d^k_p:=-\big(\nabla_{p}g(\vx^+)+A_p^Tw^+\big)-\beta A_p^T(By^k-By^+)\in  \partial f_p(x^+_p).
\eeq
By Lemma \ref{lm:mono} part 3 and the Lipschitz continuity of $\nabla g$, there exists a constant $M>M^*$ (independent of $\beta$) such that $$\|\nabla_{p}g(\vx^+)+A_p^Tw^+\|\le M-1.$$
By Lemma \ref{lm:By}, there exists $k_p\in\mathbb{N}$ such that, for $k>k_p$, $$\beta \|A_p^T(By^k-By^+)\|\le 1.$$
Then, we apply the triangle inequality to \eqref{psubgrad} to obtain $$\|d_p^k\|\le\|\nabla_{p}g(\vx^+)+A_p^Tw^+\|+\beta \|A_p^T(By^k-By^+)\| \le M.$$
Use this $M$ to define $S_M$ in Definition \ref{def:quadcvx}, which qualifies $f_p$ for \eqref{quadcvx} and thus validates the assumption in  Lemma \ref{lm:x} part 4, proving the lower bound \eqref{rilb} for $r_p$. As already argued, we get $\lim_{k\rightarrow\infty}\|A_px^k_p-A_px^{+}_p\|= 0$.

\smallskip
\noindent \textit{Case 2)}: $f_i$'s are piecewise linear (cf. Definition \ref{def:piecewise_linear}).  From  $\|By^k - By^+\|\rightarrow 0$ and $\|w^k - w^+\|\rightarrow 0$ (Lemma \ref{lm:By}) and $\|\nabla g(\vx^k) - \nabla g(\vx^+)\| < (p+1)\bar{M}L_g\epsilon_0$ (Lemma \ref{lm:pl}). In light of \eqref{psubgrad},
$d_p^k\in \partial f_p(x_p^+)$, $d_p^{+}\in\partial f_p(x_p^{k+2})$ such that $\|d_p^+ - d_p^k\| < 2(p+1)\bar{M}L_g \epsilon_0$ for all sufficiently large $k$.

Note that $\epsilon_0>0$ can be \emph{arbitrarily} small. Given $d_p^k\in \partial f_p(x_p^+)$ and $d_p^{+}\in\partial f_p(x_p^{k+2})$, when the following two properties both hold:  (i) $\|d_p^+ - d_p^k\| < 2(p+1)\bar{M}L_g \epsilon_0$ and (ii) $\|x^+_p - x^k_p\| < \bar{M}\epsilon_0$ (Lemma \ref{lm:pl} part 1), we can conclude that $x^+_p$ and $x^k_p$ belongs to the same $\overline{U}_{j}$. Suppose $x^+_p\in \overline{U}_{j_1}$ and  $x^k_p\in \overline{U}_{j_2}$. Because of (ii), the polyhedron $U_{j_1}$ is adjacent to the polyhedron $U_{j_2}$ or $j_1 = j_2$.
If $\overline{U}_{j_1}$ and $\overline{U}_{j_2}$ are adjacent ($j_1\not=j_2$)  and $a_{j_1}=a_{j_2}$, then we can concatenate $\overline{U}_{j_1}$ and $\overline{U}_{j_2}$ together and all the following analysis carries through.
If $\overline{U}_{j_1}$ and $\overline{U}_{j_2}$ are adjacent ($j_1\not=j_2$)  and $a_{j_1}\not=a_{j_2}$, then property (i) is only possible if at least one of $x^+_p,x_p^k$ belongs to their intersection $\overline{U}_{j_1}\cap\overline{U}_{j_2}$ so we can include both points in either $\overline{U}_{j_1}$ or $\overline{U}_{j_2}$, again giving us $j_1=j_2$. Since $x^+_p,x^k_p\in\overline{U}_{j_1}$ and $d_p^k\in \partial f_p(x_p^+)$, from the convexity of the linear function, we have
$$f_p(x^k_p)- f_p(x^+_p)-\dotp{d_p^k,x^k_p-x^+_p} \ge 0,$$
which strengthens the inequality   \eqref{quadcvx1} for $i=p$ with $\gamma_p=0$. By following the proof for Lemma \ref{lm:x} part 4, we get the lower bound \eqref{rilb} for $r_p$ with $\gamma_p=0$.
As already argued, we get $\lim_{k \rightarrow \infty}\|A_px^k_p-A_px^{+}_p\|= 0$.
\smallskip

\noindent \textbf{Inductive step}, let  $i\in\{p-1,\ldots,1\}$ and  make the inductive assumption: $\lim_{k \rightarrow \infty}\|A_jx^k_j-A_jx^{+}_j\|= 0$, $j=p,\ldots,i+1$, which together with $\lim_{k \rightarrow \infty}\|By^k - By^+\|= 0$ (Lemma \ref{lm:By}) gives $\lim_{k \rightarrow \infty}\rho_i^k=0$ (defined in \eqref{condpi1}).

\textit{Case 1)} $f_i$ is restricted prox-regular. From \eqref{condpi1}, we have
\beq\label{isubgrad}
d^k_{i}=-\big(\nabla_{i}g(x_{<i}^+,x_i^+,x_{>i}^k)+A_p^Tw^+\big)-\beta \rho_i^k\in  \partial f_i(x^+_i).
\eeq
Following a similar argument in the case $i=p$ above, there exists $k_i\in\mathbb{N}$ such that, for $k>\max\{k_p,k_{p-1},\ldots,k_i\}$, we have$$\|d_i^k\|\le \|\nabla_{i}g(x_{<i}^+,x_i^+,x_{>i}^k)+A_p^Tw^+\|+\beta \|\rho_i^k\|\le M.$$
Use this $M$ to define $S_M$ in Definition \ref{def:quadcvx} for $f_i$ and thus validates the assumption in  Lemma \ref{lm:x} part 4 for $f_i$. Therefore, we get the lower bound \eqref{rilb} for $r_i$ and thus $\lim_k\|A_ix^k_i-A_ix^{+}_i\|= 0$.

\textit{Case 2)}: $f_i$'s are piecewise linear (cf. Definition \ref{def:piecewise_linear}). The argument is the same as in the base step for case 2, except at its beginning we must use $d^k_{i}$ in \eqref{isubgrad} instead of $d_p^k$ in \eqref{psubgrad}. Therefore, we omit this part.

\cut{If $f_i$ is restricted prox-regular, by Lemma \ref{lm:mono} and the convergence of $(A_jx_j^k-A_jx_j^+)$, $j=i+1,\ldots,p$, and $(By^k-By^+)$, for sufficiently large $k_i$, we have $\|d_i\|<M$ for all $k>k_i$. Thus, the points $x^k_i$, $k>k_i$, are admissible to the inequality $\|A_ix^k_i-A_ix_i^{k+1}\|\to 0$ applied to $f_i$. Following the same arguments like above, we have $r_i \ge \frac{\beta-\gamma_i  \bar{M}^2-L_g \bar{M}^2}{2} \|A_ix^k_i-A_ix^{+}_i\|^2$ and $\|A_ix^k_i-A_ix^{+}_i\|\to0$ as $k\to\infty$. The other case where $f_i$ is piecewise linear is very similar, thus omitted.}

\emph{Finally,} by combining $r_i \ge \frac{\beta-\gamma_i  \bar{M}^2-L_g \bar{M}^2}{2} \|A_ix^k_i-A_ix^{+}_i\|^2$, for $i=1,\ldots,p$, with Lemmas \ref{lm:x} and \ref{lm:yw}, we establish the sufficient descent property \ref{P2}.

\qed
\end{proof}

\begin{lemma}[Subgradient bound property \ref{P3}]\label{lm:p3} Algorithm \ref{Alg:BCD} satisfies Property \ref{P3}.
\end{lemma}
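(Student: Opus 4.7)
The plan is to exhibit, coordinate by coordinate, an element $d^{k+1}\in\partial\Lag_\beta(\vx^+,y^+,w^+)$ and then to bound its norm by the right-hand side of \eqref{Cond:2}. By the decoupled structure of $f$, the three families of components of $\partial\Lag_\beta$ at $(\vx^+,y^+,w^+)$ are
\[
\partial_{x_i}\Lag_\beta\ni \nabla_i g(\vx^+)+\partial f_i(x_i^+)+A_i^Tw^++\beta A_i^T(\vA\vx^++By^+),\qquad \nabla_w\Lag_\beta=\vA\vx^++By^+,
\]
and $\nabla_y\Lag_\beta=\nabla h(y^+)+B^Tw^++\beta B^T(\vA\vx^++By^+)$. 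For the $x_i$-block I would take the $\partial f_i(x_i^+)$ factor to be $d_i^k$ from \eqref{condpi1}, which already encodes the optimality of the $x_i$-subproblem. Using $w^+-w^k=\beta(\vA\vx^++By^+)$ together with the definition of $\rho_i^k$, the candidate reduces to
\[
d_{x_i}^{k+1}=\bigl[\nabla_i g(\vx^+)-\nabla_i g(x^+_{<i},x^+_i,x^k_{>i})\bigr]+\beta A_i^T(A_{>i}x^+_{>i}-A_{>i}x^k_{>i})+\beta A_i^T(By^+-By^k)+A_i^T(w^+-w^k).
\]
For the $y$-block the $y$-subproblem identity $B^Tw^+=-\nabla h(y^+)$ of Lemma \ref{lemma:w_derivative}(a) collapses the expression to $B^T(w^+-w^k)$, and the $w$-block equals $\vA\vx^++By^+=\beta^{-1}(w^+-w^k)$.

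Next I would bound the four summands in $d_{x_i}^{k+1}$ separately: the first by $L_g\bar M\sum_{j>i}\|A_j(x_j^+-x_j^k)\|$, using the Lipschitz differentiability of $g$ (Assumption \ref{A_f}(i)) together with the reverse-control estimate \eqref{MAx} of Lemma \ref{lemma:reverse_control}; the second is already in the target form; the third is directly $\|By^+-By^k\|$; and the fourth is dominated by $\|By^+-By^k\|$ through Lemma \ref{lemma:w_derivative}(b). The $y$- and $w$-blocks then reduce to multiples of $\|By^+-By^k\|$ via the same Lemma \ref{lemma:w_derivative}(b). Collecting the $p+3$ blocks and absorbing every resulting constant (depending on $\beta$, $L_g$, $\bar M$, $C$, $\|A_i\|$ and $\|B\|$) into a single $C_2(\beta)>0$ delivers \eqref{Cond:2}.

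The only subtlety worth highlighting is the asymmetry between $x_0$ and $x_1,\ldots,x_p$ on the right-hand side of \eqref{Cond:2}. Because $x_0$ is updated first, $\rho_i^k$ for $i\ge 1$ runs only over $j>i\ge 1$, so $A_0(x_0^+-x_0^k)$ never appears in $d_{x_i}^{k+1}$; and for the $i=0$ block the sum runs over $j>0=j\ge 1$, also avoiding any $A_0$-increment. The $y$- and $w$-blocks depend only on $\|By^+-By^k\|$ via Lemma \ref{lemma:w_derivative}(b) and hence carry no primal-increment terms at all. Thus the skipping of $i=0$ in \eqref{Cond:2} is enforced by the update order rather than an imposed simplification. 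I foresee no genuine obstacle beyond bookkeeping; the key ingredients, Lemma \ref{lemma:reverse_control} (reverse control of $x_i$ by $A_ix_i$) and Lemma \ref{lemma:w_derivative} (control of the dual increment by $\|By^+-By^k\|$), are already in place.
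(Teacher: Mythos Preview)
Your proposal is correct and follows essentially the same approach as the paper: exhibit a subgradient of $\Lag_\beta$ at $(\vx^+,y^+,w^+)$ block by block, use the subproblem optimality conditions (for you, via the pre-defined $d_i^k$ of \eqref{condpi1}) to simplify the $x_i$-components, and then bound each residual term through Lemma \ref{lemma:w_derivative} and the Lipschitz continuity of $\nabla g$. Your explicit discussion of why the $A_0$-increment is structurally absent from the bound is a helpful clarification that the paper leaves implicit.
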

\begin{proof}
 Because $f(\vx) = g(\vx) + \sum_{i=1}^p f_i(x_i)$, we know
  \[\partial \Lag_\beta(\vx^{+},y^{+},w^{+}) = \left(\left\{\frac{\partial \Lag_\beta}{\partial x_i}\right\}_{i=1}^p,\nabla_y \Lag_\beta,\nabla_w \Lag_\beta\right)(\vx^{+},y^{+},w^{+}).\]
  In order to prove the lemma, we only need to show that each block of $\partial \Lag_\beta$ can be controlled by some constant depending on $\beta$. Therefore, it suffices to prove
  for $s=0,\ldots, p$, there exists $d_s\in \frac{\partial {\cal L}_{\beta}}{\partial x_s}(\vx^{+},y^{+},w^{+})$ such that
\begin{equation}\label{Eq:relative_error_to_prove_x}
     \|d_s\|\leq (\sigma_{\max}(A_s)\beta + L_h\bar{M} + \sigma_{\max}(A_s)C) \left(\sum_{i=1}^p\|A_i x_i^{+}-A_i x_i^k\|+\|B y^{+}-B y^k\|\right),
\end{equation}
and
\begin{align}\label{Eq:relative_error_to_prove_w}
 \|\nabla_w \Lag_{\beta}(\vx^{+},y^{+},w^{+})\|&\leq \frac{C}{\beta} \|B y^{+}-B y^k\|,\\
\label{Eq:relative_error_to_prove_y}
     \|\nabla_y \Lag_{\beta}(\vx^{+},y^{+},w^{+})\|&\leq L_h \bar{M}\|B y^{+}-B y^k\|.
\end{align}
In order to prove \eqref{Eq:relative_error_to_prove_w}, we have
$\nabla_w \Lag_\beta (\vx^{+},y^{+},w^{+}) = \vA\vx^{+}+B y^{+}=\frac{1}{\beta}(w^{+}-w^k).$
By Lemma \ref{lemma:w_derivative}, $\|\nabla_w \Lag_\beta (\vx^{+},y^{+},w^{+})\|  \leq \frac{C}{\beta}\|B y^{+}-B y^k\|$.
In order to prove \eqref{Eq:relative_error_to_prove_y}, notice that
$\nabla_y \Lag_\beta (\vx^{+},y^{+},w^{+}) = B^T(w^{+} - w^{k})$ and apply Lemma \ref{lemma:w_derivative}.
In order to prove \eqref{Eq:relative_error_to_prove_x}, observe that for $s = 0,\ldots, p$,
  \begin{align}
  &\frac{\partial \Lag_\beta}{\partial x_s}(\vx^{+},y^{+},w^{+}) \nonumber\\
  &= \nabla_{s}g(x^{+}) +\partial f_s(x^{+}_s) + A_s^Tw^{+} + \beta A_s^T\left(Ax^{+}+B y^{+}\right)\\\label{Eq:x_err_partI}
  &=\nabla_{s}g(x^{+}_{\leq s},x^{k}_{>s}) +\partial f_s(x^{+}_s)+A_s^Tw^{k} + \beta A_s^T\left(A_{\leq s}x^{+}_{\leq s}+A_{>s}x^{k}_{>s} + By^k\right)\\
  &\quad +A_s^T(w^{+} - w^{k}) +\beta A_s^T\left(A_{>s}x^{+}_{>s}-A_{>s}x^{k}_{>s} + By^+ - By^k\right) +\nabla_{s}g(x^{+})- \nabla_{s}g(x^{+}_{\leq s},x^{k}_{>s}).\label{Eq:x_err_partII}
  \end{align}
  For the parenthesized term in \eqref{Eq:x_err_partI}, the first order optimal condition for $x^{+}_s$ yields
  $$0\in \nabla_{s}g(x^{+}_{\leq s},x^{k}_{>s}) +\partial f_s(x^{+}_s)+A_s^Tw^{k} + \beta A_s^T\left(A_{\leq s}x^{+}_{\leq s}+A_{>s}x^{k}_{>s} + By^k\right).$$
Thus for $s = 0,\ldots, p$, we can have $d_s$ as in \eqref{Eq:ds},
\begin{align}\label{Eq:ds}
d_s & := \left(A_s^T(w^{+} - w^{k}) +\beta A_s^T\left(A_{>s}x^{+}_{>s}-A_{>s}x^{k}_{>s} + By^+ - By^k\right) +\nabla_{s}g(x^{+})- \nabla_{s}g(x^{+}_{\leq s},x^{k}_{>s})\right) \\
&\in \frac{\partial \Lag_\beta}{\partial x_s}(\vx^{+},y^{+},w^{+}).\nonumber
\end{align}
Note that for any $s$, $x^k_{0}$ does not appear in any $d_s$. $w^{+} - w^{k}$, $A_{>s}x^{+}_{>s}-A_{>s}x^{k}_{>s}$, $By^+ - By^k$, and $\nabla_{s}g(x^{+})- \nabla_{s}g(x^{+}_{\leq s},x^{k}_{>s})$ can all be bounded by $\left( \sum_{i=1}^p\|A_ix_i^{+}-A_i x_i^k\|+\|By^{+} - B y^k\|\right)$. Therefore,
if we define the largest singular value of $A_s$ to be $\sigma_{\max}(A_s)$, we have the bound for $d_s$:
\[\|d_s\|\leq \left(\sigma_{\max}(A_s)\beta + L_h\bar{M} + \sigma_{\max}(A_s)C\right) \left( \sum_{i=1}^p\|A_ix_i^{+}-A_i x_i^k\|+\|By^{+} - B y^k\|\right).
\]
 That completes the proof.
\qed
\end{proof}
\begin{proof}[of Theorem \ref{thm:main}].

Lemmas \ref{lm:yw}, \ref{lm:p2}, and \ref{lm:p3} establish the properties \ref{P1}--\ref{P3}. In order to show \ref{P4}, we first note that $\Lag_\beta(\vx^{k_s}, y^{k_s}, w^{k_s})$ is monotonic nonincreasing due to Lemma \ref{lm:p2}, which implies the convergence of $\Lag_\beta(\vx^{k_s}, y^{k_s}, w^{k_s})$. Since $\Lag_{\beta}$ is lower semicontinuous (l.s.c.), we have $\lim_{s\rightarrow \infty}\Lag_\beta(\vx^{k_s}, y^{k_s}, w^{k_s})\geq \Lag_\beta(\vx^*, y^*, w^*)$.
Because the only potentially discontinuous terms in $\Lag_{\beta}$ are $f_0, \ldots, f_p$, we have
\[
\lim_{s\rightarrow \infty}\Lag_\beta(\vx^{k_s}, y^{k_s}, w^{k_s}) - \Lag_\beta(\vx^*, y^*, w^*) \leq \sum_i\limsup_{s\rightarrow \infty} f_i(x_i^{k_s}) - f_i(x_i^*).
\]
However, because $x^{k_s}_i$ is the optimal solution for the sub-problem $\min_{x_i}\Lag_{\beta}(x^{k_s}_{<i}, x_i, x^{k_s - 1}_{>i}, y^{k_s - 1}, w^{k_s - 1})$, we know $\Lag_{\beta}(x^{k_s + 1}_{<i}, x_i^*, x^{k_s}_{>i}, y^{k_s - 1}, w^{k_s - 1}) \geq \Lag_{\beta}(x^{k_s}_{\leq i}, x^{k_s - 1}_{> i}, y^{k_s - 1}, w^{k_s - 1})$. Taking the limit over their difference, we have
$ \limsup_{s\rightarrow \infty} f_i(x_i^{k_s}) - f_i(x_i^*) \leq 0$. That completes the proof for \ref{P4}.
Theorem \ref{thm:main} follows from Proposition \ref{prop:convergence}.
\qed
\end{proof}
\section{Discussion}\label{sec:discussion}
\subsection{Tightness of assumptions}
In this section, we demonstrate the tightness of the assumptions in Theorem \ref{thm:main} and compare them with related recent works. We only focus  on results that do \emph{not} make assumptions on the iterates themselves.

Hong et al. \cite{hong2014a} uses $\nabla h(y^k)$ to bound $w^k$. This   inspired our analysis. They studied ADMM for nonconvex consensus and sharing problem. Their formulation is
\begin{align*}
\Min_{x_0,\ldots, x_p, y} & \sum_{i=0}^p f_i(x_i) + h(y)\\
\mathrm{subject~to~} & \sum_{i=0}^p A_i x_i - y = 0.
\end{align*}
where $h$ is Lipschitz differentiable, $A_i$ has full column rank and $f_i$ is Lipschitz differentiable or convex for $i = 0,\ldots, p$.
Moreover, $\mathrm{dom}(f_i)$ is required to be a closed bounded set for $i = 0,\ldots, p$.

The boundedness of $\dom(f_i)$ implies the assumption \ref{A_coercive}. The requirement of $A_i$ for $i = 1,\ldots, p$ and $B = -\mathbf{I}$ implies \ref{A_feasible} and \ref{A_rank}. Moreover, $f(x_0,\ldots, x_p) = \sum_i f_i(x_i)$, which clearly implies \ref{A_f}. $h$ satisfies \ref{A_h}, too. This shows our theorem could fully cover their case.

Wang et al. \cite{Wang2015} studies the so-called Bregman ADMM and includes the standard ADMM as a special case. The following formulation is considered:

\begin{align*}
\Min_{x_0,\ldots, x_p, y} & \sum_{i=0}^p f_i(x_i) + h(y)\\
\mathrm{subject~to~} & \sum_{i=0}^p A_i x_i + B y = 0.
\end{align*}

By setting all the auxiliary functions in their algorithm to zero, their assumptions for the standard ADMM reduce to
 \begin{enumerate}
 \item[(a)] $B$ is invertible.
 \item[(b)] $h$ is Lipschitz differentiable and lower bounded.  There exists $\beta_0>0$ such that $h - \beta_0 \nabla h$ is lower bounded.
 \item[(c)] $f = \sum_{i=0}^p f_i(x_i)$ where $f_i$, $i=0,\ldots,p$ is strongly convex.
 \end{enumerate}
It is easy to see that (a), (b) and (c) imply assumptions \ref{A_coercive} and \ref{A_rank}, (a) implies \ref{A_feasible}, (c) implies \ref{A_f} and (b) implies \ref{A_h}. Therefore, their assumptions are stronger than ours. We have much more relaxed conditions on $f$, which can have a coupled Lipschitz differentiable term with separable restricted prox-regular or piecewise linear parts. We also have a simpler assumption on the boundedness without using $h-\nabla h$.

Li and Pong \cite{Li2014} studies ADMM and its proximal version for nonconvex objectives. Their formulation is
\begin{align*}
\Min_{x_0, y} & f_0(x_0) + h(y)\\
\mathrm{subject~to~} & x_0 + B y = 0.
\end{align*}
Their assumptions for ADMM are
\begin{enumerate}
\item[(1)] $f_0$ is lower semi-continuous.
\item[(2)] $h\in C^2$  with bounded Hessian matrix $c_2 I \succeq \nabla^2 h \succeq c_1 I$ where $c_2>c_1>0$.
\item[(3)] $B$ is full row rank.
\item[(4)] $h$ is coercive and $f_0$ is lower bounded.
\end{enumerate}
The assumptions (3) and (4) imply our assumptions \ref{A_coercive} and \ref{A_f}, (3) implies \ref{A_feasible} and \ref{A_rank}, and (2) implies \ref{A_h}. Our assumptions on $h$ and the matrices $A,B$ are more general.


 In summary, our convergence conditions for ADMM on nonconvex problems are the most general to the best of our knowledge. It is natural to ask whether our assumptions can be further weakened. We will provide some examples to demonstrate that, while \ref{A_coercive}, \ref{A_f} and \ref{A_rank} can probably be further weakened, \ref{A_h} and \ref{A_feasible} are  essential in the convergence of nonconvex ADMM and cannot be completely dropped in general.
In \cite{Chen2014}, their divergence example is
\begin{subequations}
 \begin{align}
 \Min_{x_1,x_2,y}~ & \hspace{50pt}0\\
 \text{subject to } &
 \left(
 \begin{array}{c}
 1\\
 1\\
 1
 \end{array}
 \right)x_1 +
  \left(
 \begin{array}{c}
 1\\
 1\\
 2
 \end{array}
 \right)x_2 +
  \left(
 \begin{array}{c}
 1\\
 2\\
 2
 \end{array}
 \right)y =
  \left(
 \begin{array}{c}
 0\\
 0\\
 0
 \end{array}
 \right).
 \end{align}
\end{subequations}
Another related example is shown in \cite[Example 7]{Li2014}.
\begin{subequations}
 \begin{align}
 \Min_{x_1,x_2,y}~ & \iota_{S_1}(x_1) + \iota_{S_2}(x_2)\\
 \text{subject to } & x_1 = y\\
 & x_2 = y,
 \end{align}
\end{subequations}
 where $S_1 = \{x = (x_1,x_2)\mid x_2 = 0\}$, $S_2 = \{(0,0),(2,1),(2,-1)\}$.
 These two examples satisfy \ref{A_coercive} and \ref{A_f}-\ref{A_h} but fail to satisfy \ref{A_feasible}. Without \ref{A_feasible}, for come cases ADMM is incapable to find a feasible point at all, let alone a stationary point. Therefore, \ref{A_feasible} is  indispensable.

To see the necessity of \ref{A_h} (the smoothness of $h$), consider another divergence example
\begin{subequations}
 \begin{align}
 \Min_{x,y}~ & -|x| + |y|\\
 \text{subject to } & x = y,\ x\in [-1,1].
 \end{align}
\end{subequations}
For any $\beta>0$, with the initial point $(x^0,y^0,w^0) = (-\frac{2}{\beta},0,-1)$, we get the sequence $(x^{2k+1},y^{2k+1},w^{2k+1}) = (\frac{2}{\beta},0,1)$ and $(x^{2k},y^{2k},w^{2k}) = (-\frac{2}{\beta},0,-1)$ for $k\in \mathbb{N}$, which diverges. This problem satisfies all the assumptions except \ref{A_h}, without which $w^k$ cannot be controlled by $y^k$ anymore. 
Therefore, \ref{A_h} is also indispensable.

Although the assumptions \ref{A_feasible} and \ref{A_h} seem essential for the convergence of ADMM, other assumptions, especially the assumption \ref{A_f}, might be further relaxed. Moreover, our result requires the $y$-block to be updated at last right before the update of multiplier. Further studies could be carried out to study the case when a different order is used.
\subsection{Primal variables' update order in ADMM}
We discuss about the update order of $\{x_i\}_{i=0}^p$ and $y$ in this subsection.  Theorem \ref{thm:main} and Theorem \ref{thm:main_theorem_convex} apply to the ADMM\ in which the primal variables $x_0,\ldots, x_p$ are sequentially updated in a fixed order. 
With minor changes to the proof, both theorems still hold for arbitrary update orders of $x_1,\ldots, x_p$, possibly different between iterations, as long as $x_0$ is always the first and $y$ is always the last primal variable to update, just before $w$. In particular,
$x_1,\ldots, x_p$ could be updated using random scheme or greedy scheme, which may help avoid low-quality local solutions. 
Recently, randomized ADMM is considered in papers such as \cite{Iutzeler2013,ARock2016,Sun2015}. \cite{Sun2015} considered the randomly permuted ADMM for solving linear systems, and proved its convergence in the expectation sense. However, in general, permutation including the last block $y$ could cause ADMM to diverge (A convex example can be found in \cite{Yin2016}). Consider
\begin{align*}
\Min_{x,y\in\RR} &\quad x(1+y) \\
\St &\quad x - y=0.
\end{align*}
It is easy to check that, if we fix the update order to either $x, y, w$  or $y, x, w$ for all iterations, Algorithm \ref{Alg:BCD} converges. However, if we alternate between the two update orders, we obtain (with $\alpha := 1/\beta$) the diverging sequence
$(x^{2k+1}, y^{2k+1}, w^{2k+1}) = (2\alpha(\alpha - 1), -\alpha,\alpha -1)$ and $(x^{2k}, y^{2k}, w^{2k}) = (-\alpha, 2\alpha(\alpha - 1),-\alpha)$.  Another divergent example when primal variables' update order alternates is the following convex and nonsmooth problem:
\begin{subequations}
 \begin{align}
 \Min_{x,y}~ & 2|x - 1| + |y|\\
 \text{subject to } & x = y.
 \end{align}
\end{subequations}
\subsection{Inexact optimization of subproblems}
Note that all subproblems in Algorithm \ref{Alg:BCD} should be solved exactly. This might restrict the wide use of the algorithm in real applications. Thus, the convergence of the inexact version of Algorithm \ref{Alg:BCD} is discussed here. We extend the developed convergence results to the following inexact version of Algorithm \ref{Alg:BCD} under some additional assumptions. More specifically, we assume that the sequence $\{\vx^k, y^k, w^k\}$ generated by the inexact version of Algorithm \ref{Alg:BCD} satisfies
\begin{enumerate}[label={P\arabic*'}]
\item\label{P1'} \textbf{(Boundedness)} $\{\vx^k, y^k, w^k\}$ is bounded, and $\Lag_\beta(\vx^k,y^k,w^k)$ is lower bounded;
\item\label{P2'} \textbf{(Sufficient descent)} there is a nonnegative sequence $\{\eta_k\}$ and a constant $C_1>0$ such that for all sufficiently large $k$, we have
\begin{align}\label{Cond:1'}
&\Lag_\beta(\vx^{k},y^k,w^{k}) - \Lag_\beta(\vx^{k+1},y^{k+1},w^{k+1}) \geq C_1\big(\|B (y^{k+1} - y^k)\|^2 +\sum_{i=1}^p \|A_i(x_i^{k}-x_i^{k+1})\|^2\big) - \eta_k,
\end{align}
\item\label{P3'} \textbf{(subgradient bound)} and there exists $d^{k+1}\in \partial \Lag_\beta(\vx^{k+1},y^{k+1},w^{k+1})$ such that
\begin{align}\label{Cond:2'}
\|d^{k+1}\| &\leq C_2\big(\|B(y^{k+1}-y^k)\|+\sum_{i=1}^p\|A_i(x_i^{k+1}-x_i^k)\|\big) + \eta_k.
\end{align}
\cut{where $\sum_k \eta_k < \infty$.}
\end{enumerate}
When $\sum_k \eta_k < \infty$, the convergence results in Theorem \ref{thm:main} still hold for this sequence. This is because Proposition \ref{prop:convergence} still holds when the error is summable.
However, when a specific algorithm is applied to solve these subproblems inexactly, it might require some additional conditions, and we leave this in the future work.

\section{Applications}\label{sec:application}
In this section, we apply the developed  convergence results to several well-known applications. To the best of our knowledge, all the obtained convergence results are novel and cannot be recovered from the previous literature.

\subsection*{A) Statistical learning}
\cut{A typical statistical learning model assumes the underlying distribution of the data and tries to estimate the distribution based on some samples. The distribution is usually characterized by a set of parameters. To find those parameters, an optimization is usually proposed.} Statistical learning models often involve two terms in the objective function. The first term is used to measure the fitting error. The  second term is a regularizer to control the model complexity\cut{ and lessen overfitting}. Generally speaking, it can be written as
\begin{align}\label{Eq:original_stat_learning}
  \Min_{ x\in\RR^n}  &\quad\sum_{i=1}^p l_i(A_i x-b_i) + r( x),
\end{align}
where $A_i\in \RR^{m_i\times n}$, $b_i\in \RR^{m_i}$ and $ x\in \RR^n$. Examples of the fitting measure
$l_i$ include least squares, logistic functions, and other smooth functions. The regularizers can be some \cut{sparse} sparsity-inducing functions \cite{Bach2012,Daubechies2010,Yang2015,Zeng2014,Zeng2015,ZengAIT2015,ZengHalf2014} such as MCP, SCAD, $\ell_q$ quasi-norms for $0 < q \leq 1$.
Take LASSO as an example,
\[\Min_ x~ \frac{1}{2}\|y - A x\|^2 + \lambda \| x\|_1.\]
The first term $\|y - A x\|^2$ measures the difference between the linear model $A x$ and outcome $y$. The second term $\| x\|_1$ measures the sparsity of $ x$.

In order to solve \eqref{Eq:original_stat_learning} using ADMM, we reformulate it as
\begin{align}\label{Eq:Statistic}
  \Min_{ x,\{z_i\}_{i=1}^p}  &\quad r( x) + \sum_{i=1}^p l_i(A_i z_i-b_i) ,\\
  \text{subject to}  &\quad x = z_i, \ \forall i=1,\ldots,p.\nonumber
\end{align}
\cut{In \cite{Breheny2011}, ADMM is applied to solve this model because it can be implemented parallelly \cite{Bertsekas1989,Goldstein2015}, which is very suitable for large data sets. }
Algorithm \ref{Alg:Statistic} gives the standard ADMM algorithm for this problem.

\begin{algorithm}
{\small
Denote $\mathbf{z} = [z_1;z_2;\cdots;z_p]$, $\mathbf{w} = [w_1;w_2;\cdots;w_p]$.
\begin{algorithmic}\caption{ADMM for \eqref{Eq:Statistic}}\label{Alg:Statistic}
\STATE {\bf Initialize} $ x^0, \mathbf{z}^0, \mathbf{w}^0$ arbitrarily;
\WHILE{stopping criterion not satisfied}
\STATE
$ x^{k+1} \gets \argmin_{x} r(x) + \frac{\beta}{2}\sum_{i=1}^p (z_i^k+ \frac{w_i^k}{\beta}-x)^2$;
\FOR{$s=1,\ldots,p$}
\STATE  $z^{k+1}_{s} \gets \argmin_{z_s} l_s(A_s z_s-b_s) + \frac{\beta}{2} (z_s+\frac{w^k_s}{\beta}- x^{k+1})^2$;
\STATE $w^{k+1}_{s} = w^k_s + \beta(z^{k+1}_s -  x^{k+1})$;
\ENDFOR
\STATE $k\gets k+1$;
\ENDWHILE
\STATE return $ x^k$.
\end{algorithmic}}
\end{algorithm}
Based on Theorem \ref{thm:main}, we have the following corollary.
\begin{coro}
  Let $r(x)=\| x\|_q^q = \sum_i |x_i|^q$, $0 < q \leq 1$, SCAD, MCP, or any piecewise linear function, if
  \begin{enumerate}
    \item[i) ] (Coercivity) $r(x) + \sum_i l_i(A_i x + b_i)$ is coercive;
    \item[ii) ] (Smoothness) For each $i=1,\ldots, p$, $l_i$ is Lipschitz differentiable.
  \end{enumerate}
then for sufficiently large $\beta$, the sequence $(x^k,\mathbf{z}^k,\mathbf{w}^k)$generated by Algorithm \ref{Alg:Statistic} has limit points and all of its limit points are stationary points of the augmented Lagrangian $\Lag_{\beta}$.
\end{coro}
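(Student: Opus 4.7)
The plan is to recast Algorithm \ref{Alg:Statistic} as an instance of Algorithm \ref{Alg:BCD} on a two-block problem, and then verify \ref{A_coercive}--\ref{A_h} so that Theorem \ref{thm:main} applies. Concretely, set $x_0 := x$ and bundle \emph{all} the auxiliary variables into a single last block $y := (z_1;\ldots;z_p)$. The constraints $x = z_i$ then take the form $A_0 x + B y = 0$ with $A_0 = (I;\ldots;I)$ a vertical stack of $p$ identities and $B = -I$, and the objective splits as $f_0(x_0) = r(x_0)$, $g \equiv 0$, and $h(y) = \sum_{i=1}^p l_i(A_i z_i - b_i)$. Because both $h$ and the quadratic penalty $\tfrac{\beta}{2}\sum_i \|x - z_i\|^2$ are block-separable in the $z_s$'s, the joint $y$-minimization in Algorithm \ref{Alg:BCD} decouples into the $p$ independent $z_s$-subproblems actually executed by Algorithm \ref{Alg:Statistic}; the interleaving of the $w_s$-updates with the $z_s$-updates is harmless because each $z_s^{k+1}$ depends only on $(x^{k+1}, w_s^k)$.

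Under this recasting most hypotheses are immediate. $B = -I$ gives $\image{B} = \RR^{pn} \supseteq \image{\vA}$, so \ref{A_feasible} holds and the sub-minimization $H(u) = -u$ is trivially Lipschitz. The map $F_0(u) = \argmin_x\{r(x): A_0 x = u\}$ is defined on $\image{A_0} = \{(u;\ldots;u)\}$, where it simply returns the common component, hence is linear and Lipschitz; thus \ref{A_rank} holds. For \ref{A_coercive}, feasibility forces $z_i = x$ for all $i$, so $\phi$ restricted to the feasible set equals $r(x) + \sum_i l_i(A_i x - b_i)$, which is coercive by hypothesis~(i). Hypothesis~(ii) yields \ref{A_h}, since $\nabla h$ is block-separable with $\nabla h(y)_s = A_s^T \nabla l_s(A_s z_s - b_s)$ Lipschitz on each block.

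The only hypothesis needing a brief comment is \ref{A_f}. Because there are no intermediate $x_1,\ldots,x_p$ blocks and $g \equiv 0$, \ref{A_f} reduces to the requirement that $f_0 = r$ be lower semi-continuous (Scenario~1a of \ref{A_f}; the restricted prox-regularity clause on $f_1,\ldots,f_p$ is vacuous). This holds for every $r$ in the corollary---$\ell_q$ with $0 < q \le 1$, SCAD, MCP, and continuous piecewise-linear functions are all lower semi-continuous. Invoking Theorem \ref{thm:main} then yields, for every $\beta$ above the threshold identified in Lemma \ref{lm:p2}, boundedness of $(x^k,\mathbf{z}^k,\mathbf{w}^k)$, existence of at least one limit point, and $0 \in \partial \Lag_\beta$ at every limit point.

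The one genuine modelling subtlety---the only place that needs care---is the bundling step. A naive assignment such as $y := z_p$ with $z_1,\ldots,z_{p-1}$ treated as intermediate $x_i$-blocks would make $\image{B}$ a strict subspace of $\RR^{pn}$ not containing $\image{\vA}$, so \ref{A_feasible} would fail outright. Bundling all $z_i$'s into a single $y$ block works precisely because the resulting $y$-subproblem separates exactly into the independent $z_s$-subproblems of Algorithm \ref{Alg:Statistic}, so the reformulated ADMM and Algorithm \ref{Alg:Statistic} generate identical sequences and the conclusion of Theorem \ref{thm:main} transfers verbatim.
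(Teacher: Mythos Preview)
Your proof is correct and follows the same high-level strategy as the paper: cast the problem as a two-block instance of Algorithm~\ref{Alg:BCD} (the single $x$-block plays the role of $x_0$, the stacked $\mathbf{z}$ plays the role of $y$, $B$ is $\pm I$), verify \ref{A_coercive}--\ref{A_h}, and invoke Theorem~\ref{thm:main}. Your remarks on the separability of the $y$-subproblem and the harmlessness of interleaving the $w_s$-updates are correct and justify that Algorithm~\ref{Alg:Statistic} and the reformulated Algorithm~\ref{Alg:BCD} generate identical iterates.

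The one genuine difference is in the treatment of \ref{A_f}. You exploit the fact that with $p=0$ there are no intermediate blocks $x_1,\ldots,x_p$, so Scenario~1a of \ref{A_f} reduces to asking only that $f_0=r$ be lower semi-continuous---which $\ell_q$, SCAD, MCP and piecewise-linear functions all satisfy trivially. The paper instead carries out a direct verification that $\|\cdot\|_q^q$ is restricted prox-regular in the sense of Definition~\ref{def:quadcvx} (and that MCP/SCAD are prox-regular via \cite{poliquin1996prox}), which is not needed for this corollary once one notices that $r$ sits in the $f_0$ slot. Your route is shorter and fully sufficient here; the paper's longer calculation has independent value, since the restricted prox-regularity of $\|\cdot\|_q^q$ is exactly what one would need in a multi-block application where such a regularizer appears on an \emph{intermediate} block $x_i$, $i\ge1$, rather than on $x_0$.
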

\begin{proof}
Rewrite the optimization to a standard form, we have
\begin{subequations}\label{lassoreform}
\begin{align}
  \Min_{ x,\{z_i\}_{i=1}^p}  &\quad r(x)+\sum_{i=1}^p l_i(A_i z_i-b_i),\\
  \text{subject to}  &\quad
  E x + \mathbf{I}_{np}{{\bf z}} = 0.
\end{align}
\end{subequations}
where
$E = -[\mathbf{I}_n;\ldots;\mathbf{I}_n]\in \RR^{np\times n}$, $\mathbf{I}_{np}\in \RR^{np\times np}$ is the identity matrix, and ${{\bf z}} = [z_{1};\ldots;z_{p}]\in \RR^{np}$. Fitting \eqref{lassoreform} to the standard form \eqref{Eq:originalForm1}, there are two blocks $(x,{{\bf z}})$ and $B = \mathbf{I}_{np}$. $f(x) = r(x)$ and $h(z) = \sum_{i=1}^p l_i(A_i z_i- b_i)$.

Now let us check \ref{A_coercive}--\ref{A_h}.
\ref{A_coercive} holds because of i). \ref{A_feasible} holds because $B = \mathbf{I}_{np}$. \ref{A_h} holds because of ii). \ref{A_rank} holds because $E$ and $\mathbf{I}_{np}$ both have full column ranks.
{ If $r(x)$ is piecewise linear, then \ref{A_f} holds naturally.} If $r(x)$ is MCP
\[
P_{\gamma,\lambda}(x) \triangleq
\left\{
\begin{array}{ll}
\lambda|x| - \frac{x^2}{2\lambda}, &\quad \text{if~}|x|\leq \gamma\lambda\\
\frac{1}{2}\gamma\lambda^2, &\quad \text{if~}|x|\geq \gamma\lambda
\end{array}
\right.,
\]
 or SCAD
\[
Q_{\gamma,\lambda}(x) \triangleq
\left\{
\begin{array}{ll}
\lambda |x|, & \quad \text{if~} |x| \leq \lambda\\
\lambda|x| - \frac{2\gamma\lambda|x| - x^2 - \lambda^2}{2\gamma - 2}, & \quad \text{if~}\lambda < |x|\leq \gamma\lambda\\
\frac{1}{2}(\gamma+1)\lambda^2, & \quad \text{if~}|x|\geq \gamma\lambda
\end{array}
\right..
\]
we can verify that those functions are the maximum of a set of quadratic functions.
 Then by \cite[Example 2.9]{poliquin1996prox}, we know they are prox-regular.
Hence, it remains to verify \ref{A_f}{(ii)a} that $r(x) = \sum_{i} |x_i|^q$ is restricted prox-regular. When $q = 1$, this is trivial; when $0 < q < 1$, it has been proved in Proposition \ref{Prop:restricted_prox}.
This verifies \ref{A_f}  and completes the proof. \qed
\end{proof}
\subsection*{B) Minimization on compact manifolds}
Compact manifolds and their projection operators such as spherical manifolds $S^{n-1}$, Stiefel manifolds (the set of $p$ orthonormal vectors $x_1,\ldots,x_p\in\RR^n$, $p\le n$) and Grassmann manifolds (the set of subspaces in $\RR^n$ of dimension $p$) often arise in optimization.
Some recent studies and algorithms can be found in \cite{Wen2013,Lai2014,Lu2012Augmented}. A simple example is:
\begin{align}\label{Eq:Orthogonal}
  \Min_{x\in\RR^n}  &\quad J(x),\\
  \text{subject to}  &\quad\|x\|^2 = 1,\nonumber
\end{align}
More generally, let $S$ be any compact set. We consider the problem
\begin{align}\label{Eq:OrthogonalS}
  \Min_{x\in\RR^n}  &\quad J(x),\\
  \text{subject to}  &\quad x\in S,\nonumber
\end{align}
which can be rewritten to the following form:
\begin{subequations}\label{Eq:Orthogonal_reform}
\begin{align}
  \Min_{x,y}  &\quad \iota_{S}(x) + J(y),\\
  \text{subject to}  &\quad x - y = 0,
\end{align}
\end{subequations}
where $\iota_S(\cdot)$ is the indicator function: $\iota_S(x) = 0$ if $x\in S$ or $\infty$ if $x\not\in S$.
Applying ADMM to solve this problem, we get Algorithm \ref{Alg:Orthogonal}.
\begin{algorithm}
{\small
\begin{algorithmic}\caption{ADMM for minimization on a compact set \eqref{Eq:Orthogonal_reform}}\label{Alg:Orthogonal}
\STATE {\bf Initialize} $x^0,y^0,w^0$ arbitrarily;
\WHILE{stopping criterion not satisfied}
\STATE  $x^{k+1} \gets \Proj_S(y^k -  \frac{w^k}{\beta})$; 
\STATE
$y^{k+1} \gets \argmin_{y} J(y) + \frac{\beta}{2} \|y -  \frac{w^k}{\beta}-x^{k+1}\|^2$;
\STATE $w^{k+1} \gets w^k + \beta(y^{k+1} - x^{k+1})$;
\STATE $k\gets k+1$.
\ENDWHILE
\STATE return $x^k$.
\end{algorithmic}}
\end{algorithm}

Based on Theorem \ref{thm:main}, we have the following corollary.
\begin{coro}
  If $J$ is Lipschitz differentiable, then for any sufficiently large $\beta$, the sequence $(x^k,y^k,w^k)$ \cut{in the} generated by Algorithm \ref{Alg:Orthogonal} has 
  at least one limit point, and each limit point is a stationary point of the augmented Lagrangian $\Lag_{\beta}$.
\end{coro}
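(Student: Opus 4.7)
The plan is to cast \eqref{Eq:Orthogonal_reform} as an instance of the standard model \eqref{Eq:originalForm1} with $p=0$, so the $\vx$-block is just $x_0 = x$ with $A_0 = I$, while the $y$-block has $B = -I$. Under this identification, $f(x_0) = \iota_S(x_0)$ and $h(y) = J(y)$, and the corollary will follow immediately by verifying \ref{A_coercive}--\ref{A_h} and invoking Theorem \ref{thm:main}.

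The structural assumptions \ref{A_coercive}--\ref{A_rank} are straightforward to check. Since $S$ is compact, the feasible set $\{(x,y): x=y\in S\}$ is bounded, so \ref{A_coercive} holds vacuously for any continuous $\phi$. We have $\image(A_0) = \RR^n = \image(B)$, which gives \ref{A_feasible}. For \ref{A_rank}, the sub-minimization paths are explicit: the constraint $A_0 x_0 = u$ pins $x_0 = u$, so $F_0(u) = u$, and similarly $By = u$ forces $H(u) = -u$; both maps are $1$-Lipschitz, so we may take $\bar{M} = 1$. Note that uniqueness here is automatic and does not rely on whether the $x$-update in Algorithm \ref{Alg:Orthogonal} (a projection onto a possibly nonconvex compact set) is single-valued, since Theorem \ref{thm:main} only needs the sub-minimization paths of \ref{A_rank} to be well-defined.

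For the objective, I will place the problem in Scenario 1a of \ref{A_f} by writing $g(\vx) \equiv 0$, which is trivially Lipschitz differentiable with $L_g = 0$, and $f_0 = \iota_S$; compactness of $S$ implies closedness, hence $\iota_S$ is lower semi-continuous. Crucially, because $p = 0$ there are no restricted prox-regular blocks $f_1,\ldots,f_p$ to verify, which bypasses the most delicate part of the proof of Theorem \ref{thm:main}. Finally, \ref{A_h} is precisely the Lipschitz-differentiability hypothesis placed on $J$. With all of \ref{A_coercive}--\ref{A_h} in force, Theorem \ref{thm:main} yields, for $\beta$ exceeding the threshold given in Lemma \ref{lm:p2}, that $(x^k,y^k,w^k)$ is bounded, admits at least one limit point, and every such limit point is a stationary point of $\Lag_\beta$. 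I do not anticipate a substantive obstacle; the only point requiring care is recording the identification $p=0$, $A_0 = I$, $B = -I$ correctly so that the $f_1,\ldots,f_p$ clauses drop out.
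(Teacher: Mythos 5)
Your proposal is correct and follows essentially the same route as the paper: cast \eqref{Eq:Orthogonal_reform} into the form \eqref{Eq:originalForm1} with $f_0=\iota_S$, $h=J$, $A_0=I$, $B=-I$, verify \ref{A_coercive}--\ref{A_h} (coercivity from compactness of $S$, \ref{A_feasible}--\ref{A_rank} from the identity coefficient matrices, \ref{A_f} from lower semi-continuity of $\iota_S$, \ref{A_h} from Lipschitz differentiability of $J$), and invoke Theorem \ref{thm:main}. Your additional remarks on the triviality of the sub-minimization paths and on $p=0$ eliminating the restricted prox-regular clauses are accurate refinements of the paper's terser verification.
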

\begin{proof}
To show this corollary, we shall verify assumptions \ref{A_coercive}--\ref{A_h}.

The assumption \ref{A_coercive} holds because the feasible set is a bounded set and $J$ is lower bounded on the feasible set.
\ref{A_feasible} and \ref{A_rank} hold because both $A$ and $B$ are identity matrices. \ref{A_h} holds because $J$ is Lipschitz differentiable. \ref{A_f} holds because $\iota_S$ is lower semi-continuous.
\end{proof}

\subsection*{C) Smooth Optimization over complementarity constraints}

We consider the following optimization problem over complementarity constraints.

\begin{align}
\label{Eq:OPToverComplementarity}
&\Min_{\{x,y\}} \quad h(x,y)\\
& \mathrm{subject\ to} \quad x^T y =0, x\geq 0, y \geq 0, \nonumber
\end{align}
where $h(x,y)$ is a smooth function with Lipschitz differentiable gradient.
The considered problem is a special case of the mathematical programming with equilibrium constraints (MPEC) \cite{Chen2016working}, and includes the linear complementarity problem (LCP) \cite{cottle-dantzig-LCP1968} as a special case. In order to apply the ADMM algorithm to solve this problem, we introduce two auxiliary variables $x', y' \in \mathbb{R}^n$ and define the complementarity set $S \triangleq \{(x,y): x^Ty=0,\ x\geq 0, \ y \geq 0\}$. With these notations, problem \eqref{Eq:OPToverComplementarity} can be reformulated as follows
\begin{align}
\label{Eq:OPToverComplementarity1}
&\Min_{\{x',y',x,y\}} \quad \iota_S(x',y')+h(x,y)\\
& \mathrm{subject\ to} \quad x'-x=0, \quad y'-y=0, \nonumber
\end{align}
where $\iota_S(x',y')$ denotes the indicator function of the set $S$.
Furthermore, let ${\bf x}_0 = \left(
 \begin{array}{c}
 x'\\
 y'
 \end{array}
 \right)$
 and the second block
${\bf y}= \left(
 \begin{array}{c}
 x\\
 y
 \end{array}
 \right)$.
 Then \eqref{Eq:OPToverComplementarity1} becomes
\begin{align}
 \label{Eq:OPToverComplementarity2}
&\Min_{{\bf x}_0,{\bf y}} \quad \iota_S({\bf x}_0)+h({\bf y})\\
& \mathrm{subject\ to} \quad {\bf x}_0 - {\bf y}=0. \nonumber
\end{align}

\begin{coro}
Assume that $h$ is Lipschitz differentiable and coercive over the complementarity set, then for sufficiently large $\beta$, the sequence $({\bf x}_0^k,{\bf y}^k,w^k)$ generated by Algorithm ADMM applied to \eqref{Eq:OPToverComplementarity2} has limit points and all of its limit points are stationary points of the augmented Lagrangian $\Lag_{\beta}$.
\end{coro}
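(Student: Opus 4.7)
The plan is to cast the reformulated problem \eqref{Eq:OPToverComplementarity2} into the framework of Theorem \ref{thm:main} (Scenario 1) and then invoke that theorem directly by verifying assumptions \ref{A_coercive}--\ref{A_h}. I would identify ${\bf x}_0 \in \RR^{2n}$ with the ``$x_0$''-block in the template \eqref{Eq:originalForm0} (with no additional primal blocks $x_1, \ldots, x_p$ and no coupling term $g$) and ${\bf y} \in \RR^{2n}$ with the ``$y$''-block. The coefficient matrices are $A_0 = I$ and $B = -I$, both identities on $\RR^{2n}$, while the objective splits as $f({\bf x}_0) = \iota_S({\bf x}_0)$ and $h({\bf y})$.

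Next, I would verify each assumption in turn. For \ref{A_coercive}, on the feasible set $\{({\bf x}_0, {\bf y}) : {\bf x}_0 = {\bf y}\}$ the objective reduces to $h({\bf y})$ restricted to ${\bf y} \in S$, which is coercive by hypothesis. For \ref{A_feasible} and \ref{A_rank}, since $A_0$ and $B$ are (signed) identities on $\RR^{2n}$ their images are the whole space, and the sub-minimization maps $F_0$ and $H$ reduce to affine maps, hence are trivially Lipschitz with constant $\bar{M}=1$; in particular the uniqueness requirement in \ref{A_rank} follows from the strict convexity of the quadratic penalty in $\Lag_\beta$ plus injectivity of $A_0$ and $B$. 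Assumption \ref{A_h} is exactly the Lipschitz differentiability of $h$ given in the statement.

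The only slightly delicate point is \ref{A_f}. Because there are no $f_1, \ldots, f_p$ blocks in this reformulation (only $f_0 = \iota_S$) and no coupling term (so $g \equiv 0$ is trivially Lipschitz differentiable with $L_g = 0$), the regularity requirements of Scenario 1a collapse to just ``$f_0$ is lower semi-continuous''. Since the complementarity set $S$ is the intersection of the three closed sets $\{(x,y) : x^T y = 0\}$, $\{x \geq 0\}$, $\{y \geq 0\}$, it is closed, so $\iota_S$ is l.s.c. and \ref{A_f}(ii)a applies.

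The main ``obstacle'' here is essentially conceptual: one must recognize that, although $S$ is highly nonconvex and not a smooth manifold, this does not force us to verify restricted prox-regularity. The nonconvex set appears only through the $x_0$-block, for which Theorem \ref{thm:main} requires only lower semi-continuity. With all assumptions in place, Theorem \ref{thm:main} applied to \eqref{Eq:OPToverComplementarity2} yields boundedness of $({\bf x}_0^k, {\bf y}^k, w^k)$ and the statement that every limit point $({\bf x}_0^*, {\bf y}^*, w^*)$ satisfies $0 \in \partial \Lag_\beta({\bf x}_0^*, {\bf y}^*, w^*)$, which is exactly the conclusion of the corollary.
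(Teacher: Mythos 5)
Your proposal is correct and matches the paper's own proof: both verify \ref{A_coercive}--\ref{A_h} for the two-block reformulation with $f_0=\iota_S$ (lower semi-continuous since $S$ is closed), $g\equiv 0$, $A_0=I$, $B=-I$, and then invoke Theorem \ref{thm:main}. The only cosmetic remark is that the uniqueness in \ref{A_rank} follows simply because $\{y:By=u\}$ is a singleton when $B$ is injective, not from any convexity of the quadratic penalty, but this does not affect the argument.
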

\begin{proof}
In order to prove this corollary, we only need to verify assumptions {\bf A}1-{\bf A}5. {\bf A}1 holds for the coercivity of $h$ over $S$ and the specific form of $\iota_S$. {\bf A}2 is obvious due to in this case $A={\bf I}$ and $B=-{\bf I}$. {\bf A}3 holds for both ${\bf I}$ and $-{\bf I}$ being full column rank. {\bf A}4 can be satisfied by setting $f_0 = \iota_S$ and $g = 0$. {\bf A}5 holds due to the Lipschitz differentiability of $h$. Thus, according to Theorem 1, we complete the proof.
$\Box$
\end{proof}

\subsection*{D) Matrix decomposition}
ADMM has also been applied to solve matrix related problems, such as sparse principle component analysis (PCA) \cite{Hu2015}, matrix decomposition \cite{Slavakis2014,luo13b}, matrix completion \cite{Cai2010}, matrix recovery \cite{Oymak2011}, non-negative matrix factorization \cite{xu11,Sun2014} and background/foreground extraction \cite{Chartrand2012,Yang2015}.

In the following, we take  the video surveillance image-flow problem as an example. A video can be formulated as a matrix $V$ where each column is a vectorized image of a video frame. It can be generally decomposed into three parts, background, foreground, and noise. The background has low rank since it does not move. The derivative of the foreground is small because foreground (such as human beings, other moving objectives)  moves relatively slowly.
The noise is generally assumed to be Gaussian and thus can be modeled via Frobenius norm.

More specifically, consider the following  matrix decomposition model:
\begin{align}\label{Eq:matrixDecomposition}
  \Min_{X,Y,Z}  &\quad p(X) +  \sum_{i=1}^{m-1}\|Y_i - Y_{i+1}\| + \norm{Z}_F^2,\\
  \text{subject to}  &\quad V = X + Y + Z,
\end{align}
where $X,Y,Z,V\in \RR^{n\times m}$, $Y_i$ is the $i$th column of $Y$, $\norm{\cdot}_F$ is the Frobenius norm, and $p(X)$ is any lower bounded lower semi-continuous penalty function, for example, the Schatten-$q$ quasi-norm $\norm{X}_q$ ($0<q\leq 1$):
\[\norm{A}_{q} = \sum_{i=1}^n \sigma_i^{q}(A),\]
where $\sigma_i(A)$ is the $i$th largest singular value of $A$.


The corresponding ADMM algorithm is given in Algorithm \ref{Alg:Matrix}.

\begin{algorithm}
{\small
\begin{algorithmic}\caption{ADMM for \eqref{Eq:matrixDecomposition}}\label{Alg:Matrix}
\STATE {\bf Initialize} $Y^0,Z^0,W^0$ arbitrarily;
\WHILE{stopping criteria not satisfied}
\STATE
$X^{k+1} \gets \argmin_{X} p(X) + \frac{\beta}{2}\norm{X + Y^k + Z^k - V + W^k/\beta}_F^2;$
\STATE  $Y^{k+1} \gets \argmin_Y \sum_{i=1}^m \|Y_i - Y_j\| + \frac{\beta}{2} \norm{X^{k+1} + Y + Z^k - V + W^k/\beta}_F^2;$
\STATE  $Z^{k+1} \gets \argmin_Z \norm{Z}_F^2 + \frac{\beta}{2} \norm{X^{k+1} + Y^{k+1} + Z - V + W^k/\beta}_F^2;$
\STATE  $W^{k+1} \gets W^k + \beta( X^{k+1} + Y^{k+1} + Z^{k+1} - V);$
\STATE $k\gets k+1$;
\ENDWHILE
\STATE return $X^k,Y^k,Z^k$.
\end{algorithmic}}
\end{algorithm}

\begin{coro}
For a sufficiently large $\beta$, the sequence $(X^k,Y^k,Z^k,W^k)$ generated by Algorithm \ref{Alg:Matrix} has at least one limit point, and each limit point is a stationary point of the augmented Lagrangian function $\Lag_{\beta}$.
\end{coro}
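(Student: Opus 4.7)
The plan is to cast the matrix decomposition model \eqref{Eq:matrixDecomposition} into the standard form \eqref{Eq:originalForm1} and then invoke Theorem \ref{thm:main}. Set $x_0 = X$, $x_1 = Y$, and take the last block to be $y = Z$, matching the update order in Algorithm \ref{Alg:Matrix}. The objective then separates as $f(X,Y) + h(Z)$ with $g \equiv 0$, $f_0(X) = p(X)$, $f_1(Y) = \sum_{i=1}^{m-1}\|Y_i - Y_{i+1}\|$, and $h(Z) = \|Z\|_F^2$. All three coefficient operators $A_0, A_1, B$ are the identity on the matrix space, and the right-hand side $V$ trivially lies in $\mathrm{Im}(B)$, so the model fits \eqref{Eq:originalForm1} after the standard shift that removes a nonzero right-hand side.

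Next I would verify the assumptions one at a time. \ref{A_feasible} holds because $\mathrm{Im}(A_i) = \mathrm{Im}(B)$ is the whole ambient matrix space. \ref{A_rank} holds with universal constant $\bar{M} = 1$ since each coefficient matrix is the identity, making the sub-minimization maps $F_i$ and $H$ themselves identity maps. For \ref{A_h}, $\nabla h(Z) = 2Z$ is Lipschitz with constant $L_h = 2$. For \ref{A_f}(i), $g \equiv 0$ is trivially Lipschitz differentiable with $L_g = 0$. For \ref{A_f}(ii)(a), $f_0 = p$ is lower semi-continuous by hypothesis, and $f_1(Y)$ is convex as a sum of Euclidean norms composed with linear maps; any convex lsc function $\varphi$ satisfies $\varphi(y') \ge \varphi(x') + \langle d, y' - x'\rangle$ for $d \in \partial \varphi(x')$, which is stronger than Definition \ref{def:quadcvx} with $\gamma_1 = 0$, so $f_1$ is restricted prox-regular.

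The main obstacle is \ref{A_coercive}. On the feasible set $\{X+Y+Z = V\}$ one must show $p(X) + \sum_i\|Y_i - Y_{i+1}\| + \|Z\|_F^2 \to \infty$ whenever $\|(X,Y,Z)\| \to \infty$. A case analysis handles this whenever $p$ is itself coercive in $X$ (in particular for the Schatten-$q$ quasi-norm with $0 < q \le 1$, which is coercive since it controls every singular value): if $\|Z\|_F$ escapes then the Frobenius term dominates; if $Z$ stays bounded but some $\|Y_i - Y_{i+1}\|$ escapes then the sum-of-norms term dominates; otherwise $Z$ is bounded and all consecutive column differences of $Y$ are bounded, so $Y$ can only grow along the ``constant-column'' direction, in which case $X = V - Y - Z$ also blows up and $p(X) \to \infty$ by coercivity of $p$. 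For a merely lower-bounded but non-coercive $p$ one would need an extra argument or an extra hypothesis; this is the single nontrivial point in the verification.

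Once \ref{A_coercive}--\ref{A_h} are in place, Theorem \ref{thm:main} applies directly: for any $\beta$ exceeding the threshold identified in Lemma \ref{lm:p2}, the sequence $(X^k, Y^k, Z^k, W^k)$ produced by Algorithm \ref{Alg:Matrix} is bounded, therefore possesses at least one limit point, and each such limit point is a stationary point of $\Lag_\beta$, which is exactly the claim.
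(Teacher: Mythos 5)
Your proof is correct and follows essentially the same route as the paper's: cast \eqref{Eq:matrixDecomposition} into the standard form with blocks $X$, $Y$ and last block $Z$, verify \ref{A_coercive}--\ref{A_h}, and invoke Theorem \ref{thm:main}. Your verification is in fact more careful than the paper's one-line checks --- you explicitly confirm that $f_1(Y)=\sum_{i=1}^{m-1}\|Y_i-Y_{i+1}\|$ is restricted prox-regular via convexity, and you correctly observe that \ref{A_coercive} needs $p$ to be coercive (as the Schatten-$q$ quasi-norm is) rather than merely lower bounded, a point the paper's justification (``coercivity of $\|\cdot\|_F$ and $\|\cdot\|_q$'') implicitly assumes by taking $p=\|\cdot\|_q$.
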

\begin{proof}
Let us verify assumptions \ref{A_coercive}--\ref{A_h}.
The assumption \ref{A_coercive} holds because of the coercivity of $\norm{\cdot}_F$ and $\norm{\cdot}_q$. \ref{A_feasible} and \ref{A_rank} hold because all the coefficient matrices are identity matrices. \ref{A_h} holds because $\norm{\cdot}_F^2$ is Lipschitz differentiable. \ref{A_f} holds because $p$ is lower semi-continuous.

\end{proof}

\section{Conclusion}\label{sec:conclusion}
This paper studied the convergence of ADMM, in its multi-block and original cyclic update form, for nonconvex and nonsmooth optimization. The objective can be certain nonconvex and nonsmooth functions while the constraints are coupled linear equalities.
Our results theoretically demonstrate that ADMM, as a variant of ALM, may converge under weaker conditions than ALM. While ALM generally requires the objective function to be smooth, ADMM only requires it to have a smooth part $h(y)$ while the remaining part $f(\vx)$ can be coupled, nonconvex, and include separable nonsmooth functions and indicator functions of constraint sets.

Our results relax the previous assumptions (e.g., semi-convexity) and allow the nonconvex functions such as $\ell_q$ quasi-norm $(0<q<1)$, Schatten-$q$ quasi-norm, SCAD, and others that often appear in sparse optimization. They also allow nonconvex constraint sets such as unit spheres, matrix manifolds, and complementarity constraints.

 The underlying proof technique identifies an exclusion set where the sequence does not enter after finitely many iterations. We also manage to have a very general first block $x_0$. We show that while the middle $p$ blocks $x_1,\ldots, x_p$ can be updated in an arbitrary order for different iterations, the first block $x_0$ should be updated at first and the last block $y$  at last; otherwise, the concerned iterates may diverge according to the existing example.


Our results can be applied to problems in matrix decomposition, sparse recovery, machine learning, and optimization on compact smooth manifolds and lead to novel convergence guarantees.


\section*{Acknowledgements}
We would like to thank Drs. Wei Shi, Ting Kei Pong, and Qing Ling for their insightful comments, and Drs. Xin Liu and Yangyang Xu for helpful discussions. We thank the three anonymous reviewers for their review and helpful comments.

\section*{Appendix}
\begin{proof}\textbf{Proof of Proposition \ref{Prop:restricted_prox}}

The fact that convex functions and the $C^1$ regular functions are prox-regular has been proved in the previous literature, for example, see \cite{poliquin1996prox}. Here we only prove the second part of the proposition.

(1): For functions $r( x) = \sum_{i} |x_i|^q$ where $0 < q < 1$, the set of general subgradient of $r(\cdot)$ is
$$\partial r(x) = \left\{d=[d_1;\ldots;d_n]\left|d_i = q\cdot \mathrm{sign}(x_i)|x_i|^{q-1} \text{ if } x_i \neq 0 \text{; } d_i\in \RR \text{ if } x_i = 0\right.\right\}.$$
For any two positive constants $C>0$ and $M>1$, take $\gamma = \max\left\{\frac{4({n}C^q+MC)}{c^2},q(1-q)c^{q-2}\right\}$, where
{ $c \triangleq \frac{1}{3}(\frac{q}{M})^{\frac{1}{1-q}}$.} The exclusion set $S_{M}$ contains the set $\{ x|\min_{x_i\neq 0} |x_i|\leq 3c\}$. For any point $z\in \mathbb{B}(0,C)/S_{M}$ and $y\in \mathbb{B}(0,C)$, if $\|z-y\|\leq c$, then $\mathrm{supp}(z) \subset \mathrm{supp}(y)$ and $\|z\|_0 \leq \|y\|_0$, where $\mathbb{B}(0,C) \triangleq \{x| \|x\|<C\}$, $\mathrm{supp}(z)$ denotes the index set of all non-zero elements of $z$ and $\|z\|_0$ denotes the cardinality of $\mathrm{supp}(z)$. Define
 \[y'_{i} =
 \left\{\begin{array}{cc}
 y_{i} & \quad i\in \mathrm{supp}(z)\\
 0 & \quad i\not\in \mathrm{supp}(z)
 \end{array}\right.,~i=1,\ldots, p. \]
Then { for any $d\in \partial r(z)$,} the following line of proof holds,
\begin{align}\nonumber
\|y\|_q^q-\|z\|_q^q - \big<d,y-z\big>
\stackrel{(a)}{\geq} & \|y'\|_q^q-\|z\|_q^q - \big<d,y'-z\big>\\ \nonumber
 \stackrel{(b)}{\geq}& - \frac{q(1-q)}{2}c^{q-2}\|z-y'\|^2\\ \label{Eq:stat_1}
 \stackrel{(c)}{\geq}& - \frac{q(1-q)}{2}c^{q-2}\|z-y\|^2,
\end{align}
{where (a) holds for $\|y\|_q^q = \|y'\|_q^q + \|y-y'\|_q^q$ by the definition of $y'$,}
(b) holds because $r(x)$ is twice differentiable along the line segment connecting $z$ and $y'$, and the second order derivative is no bigger than $q(1-q)c^{q-2}$, and (c) holds because $\|z-y\|\geq \|z-y'\|$.
{ While if $\|z-y\|> c$, then for any $d\in \partial r(z)$, we have
\begin{equation} \label{Eq:stat_2}
 \|y\|_q^q-\|z\|_q^q-\big<d,y-z\big>\geq -(2nC^q + 2MC) \geq  -\frac{2nC^q+2MC}{c^2}\|y-z\|^2.
\end{equation}
}
Combining \eqref{Eq:stat_1} and \eqref{Eq:stat_2} yields the result.

(2):
We are going to verify that Schatten-$q$ quasi-norm $\norm{\cdot}_q$ is restricted prox-regular. Without loss of generality, suppose $A\in \RR^{n\times n}$ is a square matrix.

Suppose the singular value decomposition (SVD)
of $A$ is
\begin{equation}
A = U\Sigma V^T =
[U_1,U_2]\cdot
\left[\begin{array}{cc}
\Sigma_1 & 0\\
0      & 0
\end{array}\right] \cdot
\left[\begin{array}{c}
V_1^T\\
V_2^T
\end{array}\right],\label{Eq:SVD}
\end{equation}
where $U,V\in \RR^{n\times n}$ are orthogonal matrices, and $\Sigma_1\in \RR^{K\times K}$ is diagonal whose diagonal elements are $\sigma_i(A)$, $i=1,\ldots,K$. Then the general subgradient of $\norm{A}_q^q$ \cite{Watson1992}
 is
\[\partial \norm{A}_q^q = U_1DV_1^T + \{U_2\Gamma V_2^T\big|\text{$\Gamma$ is an arbitrary matrix}\},\]
where $D\in \RR^{K\times K}$ is a diagonal matrix whose $i$th diagonal element is $ d_i = q\sigma_i(A)^{q-1}$.

Now we are going to prove $\norm{\cdot}_q^q$ is restricted prox-regular, i.e., for any positive parameters $M, P>0$, there exists $\gamma>0$ such that for any $\norm{B}_F<P$,  $\norm{A}_F<P$, $A\not\in S_M = \{A| \forall~X\in \partial \norm{A}_q^q,~\norm{X}_F>M\}$, and  $T = U_{1} D V_{1}^T + U_{2}\Gamma V_{2}^T\in \partial \norm{A}_q^q, \norm{T}_F\leq M$, we hope to show
\begin{align}\label{Eq:matrix_target}
\norm{B}_q^q - \norm{A}_q^q - \big<T,B - A\big> \geq -\frac{\gamma}{2}\norm{A-B}^2_F.
\end{align}

Let $\epsilon_0 = \frac{1}{3}(M/q)^{1/(q - 1)}$.
If $\norm{B - A} > \epsilon_0$, we have
\begin{align}\label{Eq:matrix_part1}
\norm{B}_q^q - \norm{A}_q^q - \big<T,B - A\big> \geq& - n^2P^q - M\cdot \|B - A\|_F \geq -(M\epsilon_0^{-1}+n^2P^q\epsilon_0^{-2})\norm{A-B}_F^2.
\end{align}

If $\norm{B-A}_F<\epsilon_0$,
consider the decomposition of $B = U_B \Sigma^B V_B^T = B_1 + B_2$ where $B_1 = U_B \Sigma^B_1 V_B^T$, $\Sigma^B_1$ is the diagonal matrix preserving elements of $\Sigma^B$ bigger than $\frac{1}{3}(M/q)^{1/(q - 1)}$, and $B_2 = U_B \Sigma^B_2 V_B^T$ where $\Sigma^B_2 = \Sigma^B - \Sigma^B_1$.

Define a set $S' \triangleq\{T\in \mathbb{R}^{n \times n}|\norm{T}_F \leq P,~ \min_{\sigma_i>0} \sigma_i(T) \geq \epsilon_0\}$. Let's prove $A, B_1\in S'$. If $\min_{\sigma_i >0} \sigma_i(A) < (M/q)^{1/(q - 1)}$, then for any $X\in \partial \norm{A}_q^q$, $X = U_1DV_1^T + U_2\Gamma V_2^T$ and
\[\norm{X}_F \geq \norm{U_1DV_1^T}_F \geq  \min_{\sigma_i>0} q\sigma_i^{q-1} \geq M,\]
which contradicts with the face that $A\not\in S_M$. As for $B_1$, because of $\norm{A - B}_F\leq \epsilon_0$ and $\min_{\sigma_i >0} \sigma_i(A) < (M/q)^{1/(q - 1)}$, using Weyl inequalities will we get $B_1\in S'$.

Define the function $F:S'\subset \RR^{n\times n}\rightarrow \RR^{n\times n}$, for $A = U_1\Sigma V_1^T$, $F(A) = U_1 D V_1^T$, where
\[D = \mathrm{diag}(q\sigma_1(A)^{q-1},\ldots,q\sigma_1(A)^{q-1})\]
and ($0^{q-1} = 0$).
 Based on \cite[Theorem 4.1]{Ding2014} and the compactness of $S'$, $F(A)$ is Lipschitz continuous in $S'$, i.e., there exists $L>0$, for any two matrices $A, B\in S'$ , $\norm{F(A) - F(B)}_F\leq L\norm{A - B}_F$. This implies
\begin{align}\label{Eq:final_1}
\norm{B_1}_q^q - \norm{A}_q^q - \big<U_1DV_1^T,B_1 - A\big>
\geq & -\frac{L}{2}\norm{B_1 - A}_F^2.
\end{align}
In addition, because $\norm{U_{2}^TU_B}_F< \norm{B_1 - A}_F/\epsilon_0$ and $\norm{V_{2}^TV_B}_F < \norm{B_1 - A}_F/\epsilon_0$ (see \cite{Li2000}),
\begin{align}\label{Eq:final_2}
\big<U_{2}\Gamma V_{2}^T,B_1 - A\big>= &\big<\Gamma,U_{2}^TU_B\Sigma_BV_B^TV_{2}\big>
\geq - \frac{M^2}{\epsilon_0^2} \norm{B_1 - A}_F^2.
\end{align}
Furthermore, $\norm{B_2}_q^q - \big<T,B_2\big> \geq 0$ and $\norm{B_1 - A}_F \leq  \norm{B - A}_F+\norm{B - B_1}_F\leq 2\norm{B - A}_F$, together with \eqref{Eq:final_1} and \eqref{Eq:final_2} we have
\begin{align}\nonumber
\norm{B}_q^q - \norm{A}_q^q - \big< T,B - A\big>
= & \norm{B_1}_q^q  - \norm{A}_q^q - \big<T,B_1 - A\big> + \norm{B_2}_q^q - \big<T,B_2\big>\\
\geq &
 -\left(\frac{L}{2} + \frac{M^2}{\epsilon_0^2}\right)\norm{B_1 - A}_F^2 \geq -\left(2L + \frac{4M^2}{\epsilon_0^2}\right) \norm{B - A}_F^2.\label{Eq:matrix_part2}
\end{align}
Combining \eqref{Eq:matrix_part1} and \eqref{Eq:matrix_part2}, we finally prove \eqref{Eq:matrix_target} with appropriate $\gamma$.

(3): We need to show that the indicator function $\iota_S$ of a $p$-dimensional compact $C^2$ manifold $S$ is restricted prox-regular.
First of all, by definition, the exclusion set $S_M$ of $\iota_S$ is empty for any $M>0$.
Since $S$ is compact and $C^2$,  there are a series of $C^2$ homeomorphisms $h_\eta : \RR^{p} \mapsto \RR^n$, $\eta\in \{1,\ldots, m\}$ and $\delta>0$ such that for any $x$, there exist an $\eta$ and an $\alpha_x$ satisfying $x = h_\eta(\alpha_x)\in S$.
Furthermore, for any $\|y - x\|\leq \delta$, we can find an $\alpha_y$ satisfying $y = h_\eta(\alpha_y)$.

Note that $\partial \iota_{S}(x)= \mathrm{Im}(J_{h_\eta}(x))^\perp$, where $J_{h_\eta}$ is the Jacobian  of $h_\eta$.
For any $d\in \partial \iota_S(x)$, $\|d\| \leq M$ and $\|x-y\|\leq \delta$,
\begin{align}
\iota_S(y) - \iota_S(x) - \big<d,y - x\big> \nonumber
=& -\big<d,h_\eta(\alpha_y) - h_\eta(\alpha_x)\big> \\ \nonumber
=& -\big<d,h_\eta(\alpha_y) - h_\eta(\alpha_x) - J_{h_\eta}(\alpha_y - \alpha_x)\big> \\ \nonumber
\geq& -\|d\| \cdot \gamma\|\alpha_y - \alpha_x\|^2 \\
\geq & -M \gamma C^2 \|x - y\|^2, \label{Eq:intersection_base_1}
\end{align}
where $\gamma$ and $C$ are the Lipschitz constants of $\nabla h_\eta$ and $ h^{-1}_\eta$, respectively.
For any $\|y-x\|\geq \delta$,
\begin{align}
\iota_S(y) - \iota_S(x) - \big<d,y - x\big> \nonumber
=& - \big<d,y - x\big> \\ \nonumber
\geq& - \|d\|\cdot \|y - x\|\\
\geq & -\frac{M}{\delta}\|y - x\|^2, \label{Eq:intersection_base_2}
\end{align}
where $M$ is the maximum of $\|d\|$ over $\partial \iota_S(x)$. Combining \eqref{Eq:intersection_base_1} and \eqref{Eq:intersection_base_2} shows that $\iota_{S}$ is restricted prox-regular.
\end{proof}

\begin{proof}[\textbf{Lemma \ref{lemma:reverse_control}}]
By the definitions of $H$ in \ref{A_rank}(a) and $y^k$, we have
  $y^k = H(By^k)$. Therefore,
$\|y^{k_1} - y^{k_2}\|=\|H(By^{k_1}) - H(By^{k_2})\| \leq \bar{M} \|By^{k_1} - By^{k_2}\|.$
Similarly, by the optimality of $x^k_i$, we have $x^k_i = F_i(A_ix_i^k)$. Therefore,
$\|x^{k_1}_i - x_i^{k_2}\|=\|F_i(A_ix_i^{k_1}) - F_i(A_ix_i^{k_2})\| \leq \bar{M} \|A_ix_i^{k_1} - A_ix_i^{k_2}\|.$
\qed
\end{proof}

\begin{proof}[\textbf{Lemma \ref{lemma:well_define}}]
Let us first show that the $y$-subproblem is well defined. To begin with, we will show that $h(y)$ is lower bounded by a quadratic function of $By$:
\[h(y) \geq h(H(0)) - \left(\bar{M}\|\nabla h(H(0))\|\right)\cdot\|By\| -\frac{L_h\bar{M}^2}{2} \|By\|^2. \]
By \ref{A_rank}, we know $h(y)$ is lower bounded by $h(H(By))$:
\[h(y)\geq h(H(By)).\]
Because of \ref{A_h} and \ref{A_rank}, $h(H(By))$ is lower bounded by a quadratic function of $By$:
\begin{align}
h(H(By)) \geq &  h(H(0)) + \big<\nabla h(H(0)),H(By) - H(0)\big>  -\frac{L_h}{2} \|H(By) - H(0)\|^2 \\
\geq & h(H(0)) - \|\nabla h(H(0))\|\cdot\bar{M}\cdot\|By\| -\frac{L_h\bar{M}^2}{2} \|By\|^2
\end{align}
Therefore $h(y)$ is also bounded by the quadratic function:
\[h(y) \geq h(H(0)) - \|\nabla h(H(0))\|\cdot\bar{M}\cdot\|By\| -\frac{L_h\bar{M}^2}{2} \|By\|^2. \]
Recall that $y$-subproblem is to minimize the Lagrangian function w.r.t. $y$, by neglecting other constants, it is equivalent to minimize:
\begin{align}
\argmin & ~ P(y) := h(y) + \big<w^{k} + \beta \vA\vx^+, By\big> + \frac{\beta}{2}\|By\|^2.
\end{align}
Because $h(y)$ is lower bounded by $-\frac{L_h\bar{M}^2}{2}\|By\|^2$, when $\beta > L_h\bar{M}$, $P(y)\rightarrow \infty$ as $\|By\|\rightarrow \infty$.
This shows that $y$-subproblem is coercive with respect to $By$. Because $P(y)$ is  lower semi-continuous and $\argmin h(y) \ \text{s.t.} \ By = u$ has a unique solution for each $u$, the minimal point of $P(y)$ must exist and the $y$-subproblem is well defined.

As for the $x_i$-subproblem, $i = 0,\ldots, p$, ignoring the constants yields
\begin{align}
\argmin\ \Lag_\beta(x^{+}_{<i},x_i,x^{k}_{>i},y^k,w^k)
=\argmin&\ f(x^{+}_{<i},x_i,x^k_{>i}) + \frac{\beta}{2}\|\frac{1}{\beta}w^k + A_{<i}x^+_{<i} + A_{>i}x^k_{>i} + A_ix_i + By^k\|^2\\
=\argmin&\ f(x^{+}_{<i},x_i,x^k_{>i}) + h(u) -  h(u) + \frac{\beta}{2}\|Bu-By^k-\frac{1}{\beta}w^k\|^2.
\end{align}
where $u = H(-A_{<i}x^+_{<i} - A_{>i}x^k_{>i} - A_ix_i)$. The first two terms are coercive bounded because $A_{<i}x^+_{<i} + A_{>i}x^k_{>i} + A_ix_i + Bu = 0$ and \ref{A_coercive}. The third and fourth terms are lower bounded because $h$ is Lipschitz differentiable. Because the function is lower semi-continuous, all the subproblems are well defined. \qed
\end{proof}

\begin{proof}[\textbf{Proposition \ref{prop:example}}]
Define the augmented Lagrangian function to be
\[
\Lag_{\beta}(x,y,w) = x^2 - y^2 + w(x-y) + \frac{\beta}{2} \|x - y\|^2.
\]
It is clear that when $\beta=0$, $\Lag_{\beta}$ is not lower bounded for any $w$.
We are going to show that for any $\beta>2$, the duality gap is not zero.
\[
\inf_{x\in [-1,1],y\in \RR}\sup_{w\in \RR} \Lag_{\beta}(x,y,w)
>
\sup_{w\in \RR}\inf_{x\in [-1,1],y\in \RR} \Lag_{\beta}(x,y,w).
\]
On one hand, because $\sup_{w\in \RR} \Lag_{\beta}(x,y,w) = +\infty$ when $x\neq y$  and $\sup_{w\in \RR} \Lag_{\beta}(x,y,w) = 0$ when $x=y$, we have
\[
\inf_{x\in [-1,1],y\in \RR}\sup_{w\in \RR} \Lag_{\beta}(x,y,w)
= 0.
\]
On the other hand, let $t=x-y$,
\begin{align}
\label{Eq:transform}
&\sup_{w\in \RR}\inf_{x\in [-1,1],y\in \RR} \Lag_{\beta}(x,y,w)
=
\sup_{w\in \RR}\inf_{x\in [-1,1],t\in \RR} t(2x-t) + wt+\frac{\beta}{2} t^2=
\sup_{w\in \RR}\inf_{x\in [-1,1],t\in \RR} (w+2x)t+\frac{\beta-2}{2} t^2\\
&=
\sup_{w\in \RR}\inf_{x\in [-1,1]} -\frac{(w+2x)^2}{2(\beta-2)}=
\sup_{w\in \RR} -\frac{\max\{(w-2)^2,(w+2)^2\}}{2(\beta-2)}=
-\frac{2}{\beta - 2}.
\end{align}
This shows the duality gap is not zero (but it goes to $0$ as $\beta$ tends to $\infty$).

Then let us show that ALM does not converge if $\beta^k$ is bounded, i.e., there exists $\beta>0$ such that $\beta^k\leq \beta$ for any $k\in \mathbb{N}$. Without loss of generality, we  assume that $\beta^k$ equals to the constant $\beta$ for all $k\in \mathbb{N}$. This will not affect the proof. ALM consists of two steps
\begin{enumerate}
\item[1)] $(x^{k+1},y^{k+1}) = \text{argmin}_{x,y} \Lag_{\beta}(x,y,w^k),$
\item[2)] $w^{k+1} = w^k + \tau (x^{k+1} - y^{k+1}).$
\end{enumerate}
Since $(x^{k+1} - y^{k+1})\in \partial \psi(w^k)$ where $\psi(w) = \inf_{x,y} \Lag_{\beta}(x,y,w)$, and we already know
\[\inf_{x,y} \Lag_{\beta}(x,y,w) = -\frac{\max((w-2)^2,(w+2)^2)}{2(\beta-2)},\]
we have
\[
w^{k+1} =
\left\{
\begin{array}{cc}
(1-\frac{\tau}{\beta-2}) w^k -  \frac{2\tau}{\beta-2} & \text{ if } w^{k} \geq 0\\
(1-\frac{\tau}{\beta-2}) w^k +  \frac{2\tau}{\beta - 2} & \text{ if } w^{k} \leq 0
\end{array}
\right. .
\]
Note that when $w^k = 0$, the optimization problem $\inf_{x,y} L(x,y,0)$ has two distinct minimal points which lead to two different values. This shows no matter how small $\tau$ is, $w^k$ will oscillate around $0$ and never converge.

However, although the duality gap is not zero, ADMM still converges in this case. There are two ways to prove it.
The first way is to check all the conditions in Theorem \ref{thm:main}. Another way is to check the iterates directly.
The ADMM iterates are
\begin{align}
x^{k+1} = \max\left(-1,\min(1,\frac{\beta}{\beta + 2}(y^k - \frac{w^k}{\beta}))\right), \quad
y^{k+1} = \frac{\beta}{\beta - 2}\big(x^{k+1}+\frac{w^k}{\beta}\big),
\quad w^{k+1} = w^k + \beta(x^{k+1} - y^{k+1}).
\end{align}
The second equality shows that $w^{k} = -2y^k$, substituting it into the first and second equalities, we have
\begin{align}
x^{k+1} = \max\{-1,\min\{1,y^k\}\},\quad
y^{k+1} = \frac{1}{\beta - 2}\left(\beta x^{k+1} - 2y^k\right).
\end{align}
Here $|y^{k+1}| \leq \frac{\beta}{\beta-2} + \frac{2}{\beta-2}|y^k|$.
Thus after finite iterations, $|y^{k}| \leq 2$ (assume $\beta>4$).
If $|y^k| \leq 1$, the ADMM sequence converges obviously.
If $|y^k| > 1$, without loss of generality, we could assume $2>y^k>1$.
Then $x^{k+1} = 1$.
It means $0<y^{k+1}<1$, so the ADMM sequence converges.
Thus, we know for any initial point $y^0$ and $w^0$, ADMM
converges.
\end{proof}
\medskip

\begin{proof}[\textbf{Theorem \ref{thm:main_theorem_convex}}]
Similar to the proof of Theorem \ref{thm:main}, we only need to verify P1-P4 in Proposition \ref{prop:convergence}.
\emph{Proof of P2: }
Similar to Lemma \ref{lm:x} and Lemma \ref{lm:yw}, we have
\begin{align}\nonumber
& \Lag_\beta(\vx^{k},y^k,w^k)-\Lag_\beta(\vx^{k+1},y^{k+1},w^{k+1})   \\
\geq&-\frac{1}{\beta}\|w^{k} - w^{k+1}\|^2 + \sum_{i=1}^p\frac{\beta - L_\phi\bar{M}}{2}\|A_ix_i^k-A_ix_i^{k+1}\|^2 + \frac{\beta - L_\phi\bar{M}}{2}\|By^k - By^{k+1}\|^2.
\end{align}
Since $B^Tw^k = - \partial_y \phi(\vx^k,y^k)$ for any $k\in \mathbb{N}$, we have
\[\|w^k - w^{k+1}\|\leq C_1L_\phi \left(\sum_{i=0}^p \|x_i^k - x_i^{k+1}\| + \|y^k - y^{k+1}\|\right),\]
where $C_1 = \sigma_{\min}(B)$, $\sigma_{\min}(B)$ is the smallest positive singular value of $B$, and $L_\phi$ is the Lipschitz constant for $\phi$. Therefore, we have
\begin{align}\nonumber
& \Lag_\beta(\vx^{k},y^k,w^k)-\Lag_\beta(\vx^{k+1},y^{k+1},w^{k+1})   \\
\geq& \left(\frac{\beta - L_\phi\bar{M}}{2} - \frac{CL_\phi\bar{M}}{\beta}\right)\sum_{i=0}^p\|A_ix_i^k-A_ix_i^{k+1}\|^2 + \left(\frac{\beta - L_\phi\bar{M}}{2}-\frac{C_1L_\phi\bar{M}}{\beta}\right)\|By^k - By^{k+1}\|^2.
\end{align}
When $\beta > \max\{1,L_\phi \bar{M} + 2C_1L_\phi \bar{M}\}$, P2 holds.

\emph{Proof of P1:}
First of all, we have already shown $\Lag_\beta(\vx^{k},y^k,w^k)\geq \Lag_\beta(\vx^{k+1},y^{k+1},w^{k+1})$, which means $\Lag_\beta(\vx^{k},y^k,w^k)$ decreases monotonically. There exists $y'$ such that $\vA\vx^k + By' = 0$ and $y' = H(By')$. In order to show $\Lag_\beta(\vx^k,y^k,w^k)$ is lower bounded, we apply \ref{A_coercive}-\ref{A_rank} to get
\begin{align}\label{Eq:boundedness_coupled}
   & \Lag_\beta(\vx^{k},y^k,w^k)=\phi(\vx^{k},y^k)+\big<w^{k},\sum_{i=0}^p A_ix^{k}_i + By^k\big>+ \frac{\beta}{2}\|\sum_{i=0}^p A_ix^{k}_i + By^k\|^2\\
   & = \phi(\vx^{k},y^k)+\big<d_y^k,y' - y^k\big>+ \frac{\beta}{2}\| By^k - By'\|^2
    \geq \phi(\vx^{k},y')+\frac{\beta}{4}\|\sum_{i=0}^p A_ix^{k}_i + By^k\|^2\nonumber
    > -\infty,\nonumber
 \end{align}
 for some $d_y^k \in \partial_y \phi(\vx^{k},y^k)$.
This shows that ${\cal L}_{\beta}(\vx^{k},y^k,w^k)$ is lower bounded. If we view \eqref{Eq:boundedness_coupled} from the opposite direction, it can be observed that
  \[\phi(\vx^k,y')+\frac{\beta}{4}\|\sum_{i=1}^p A_ix^{k}_i + By^k\|^2\]
is upper bounded by $\Lag_\beta(\vx^0,y^0,w^0)$. Then \ref{A_coercive} ensures that $\{\vx^k,y^k\}$ is bounded. Therefore, $w^k$ is bounded too.

\emph{Proof of P3, P4:} This part is trivial as $\phi$ is Lipschitz differentiable. Hence we omit it.
\end{proof}

\end{document}